\font\sml=cmr6
\newcommand{\K}{{\mathbf K}^\crit}
\newcommand{\sm}{\setminus}
\newcommand{\kahler}{K\"ahler }
\newcommand{\RR}{\mathbb{R}}
\newcommand{\CC}{{\mathbb C}}
\newcommand{\ZZ}{{\mathbb Z}}
\newcommand{\R}{{\mathbb R}}
\newcommand{\dbar}{\bar\partial}
\newcommand{\ddbar}{\partial\dbar}
\newcommand{\acal}{\mathcal{A}}
\newcommand{\ccal}{\mathcal{C}}
\newcommand{\fcal}{\mathcal{F}}
\newcommand{\hcal}{\mathcal{H}}
\newcommand{\lcal}{\mathcal{L}}
\newcommand{\mcal}{\mathcal{M}}
\newcommand{\ncal}{\mathcal{N}}
\newcommand{\scal}{\mathcal{S}}
\def    \id {{\operatorname{id}}}
\def    \span   {{\operatorname{span}}}
\def    \open   {{\operatorname{o}}}
\def    \span   {{\operatorname{span}}}
\def    \cvx   {{\operatorname{cvx}}}
\def    \sign   {{\operatorname{sign}}}
\def    \R  {{\mathbb R}}
 \def    \Im     {{\operatorname{Im}}}
\newtheorem{theo}{{\sc Theorem}}[section]
\newtheorem{thm}[theo]{{\sc Theorem}}
\newtheorem{cor}[theo]{{\sc Corollary}}
\newtheorem{remark}[theo]{{\sc Remark}}
\newtheorem{lem}[theo]{{\sc Lemma}}
\newtheorem{prop}[theo]{{\sc Proposition}}
\newtheorem{definition}[theo]{{\sc Definition}}
\newenvironment{defin-no-number}{\medskip\noindent{\it Definition:\/} }{\medskip}
\def\samename{\underline{\phantom{aaaaaaaa}}$\,$}
\def\h#1{\hbox{#1}}
\def\o{\omega}
\def\MA{Monge--Amp\`ere }
\def\K{K\"ahler }
\def\ra{\rightarrow}
\def\a{\alpha}
\def\th{\theta}
\def\vp{\varphi}
\def\isom{\cong}
\def\w{\wedge}
\def\i{\sqrt{-1}}
\def\text{\textstyle}
\def\ra{\rightarrow}
\def\isom{\cong}
\def\del{\partial}
\def\dis{\displaystyle}
\def\Tspancvx{T_{\span}^\cvx}
\def\calM{\mcal}
\def\calA{\acal}
\def\calC{\ccal}
\def\calL{\lcal}
\def\calS{\scal}
\def\calF{\fcal}
\def\calH{\hcal}
\def\bigs{\bigskip}
\def\eps{\epsilon}
\newcommand{\Hilb}{{{\operatorname{Hilb}}}}
\def\gM{g_{\hbox{\sml M}}}
\def\longra#1{\hbox to #1pt{\rightarrowfill}}
\def\longla#1{\hbox to #1pt{\leftarrowfill}}
\def\mapright#1#2{\smash{
                  \mathop{\longra{#2}}\limits^{#1}}}
\def\mapleft#1#2{\smash{
                  \mathop{\longla{#2}}\limits^{#1}}}
\def\mapdown#1{\Bigg\downarrow
                 \rlap{$\vcenter{\hbox{$\scriptstyle#1$}}$}}
\title[
The Cauchy problem for the Monge--Amp\`ere equation III]
{The Cauchy problem for the homogeneous Monge--Amp\`ere equation, 
III. Lifespan}
\author[Y.A. Rubinstein]{Yanir A. Rubinstein }
\address{Department of Mathematics, Stanford University, Stanford, CA 94305, USA}
\email{yanir@member.ams.org}
\author[S. Zelditch]{Steve Zelditch }
\address{Department of Mathematics, Northwestern  University, Evanston, IL 60208, USA} 
\email{zelditch@math.northwestern.edu}
\thanks{\hglue-10pt May 21, 2012}
\begin{document}

\begin{abstract}

We prove several results on the lifespan, regularity, and uniqueness of solutions of the Cauchy
problem for the homogeneous complex and real \MA equations (HCMA/HRMA) under various a priori regularity conditions.
We use methods
of characteristics in both the real and complex settings to bound the lifespan
of  solutions with prescribed regularity.  In the complex domain,
we characterize the $C^3$ lifespan of the HCMA in terms of analytic
continuation of Hamiltonian mechanics and intersection of complex time
characteristics. 
We use a conservation law type argument to prove uniqueness
of solutions of the Cauchy problem for the HCMA. 
We then prove that the Cauchy problem is ill-posed in $C^3$, in the sense 
that there exists a dense set of $C^3$ Cauchy data for which there exists 
no $C^3$ solution even for a short time. 
In the real domain we show that the HRMA is equivalent
to a Hamilton--Jacobi equation, and use the equivalence to prove that 
any differentiable weak solution is smooth,
so that the differentiable lifespan equals the convex lifespan determined 
in our previous articles.
We further  show that the only obstruction
to $C^1$ solvability is the invertibility of the associated Moser maps. 
Thus, a smooth solution of the Cauchy problem for HRMA exists for a positive
but generally finite time and cannot be continued even as a weak $C^1$ solution afterwards.
Finally, we introduce the notion of a ``leafwise subsolution" for the HCMA that
generalizes that of a solution, and many of our aforementioned results are proved
for this more general object.

\end{abstract}

\maketitle


\section{Introduction}
\bigs

This article is the third in a series \cite{RZ1,RZ2}  
whose aim is to study existence, uniqueness, and regularity of
solutions of the initial 
value problem (IVP) for geodesics in the space
\begin{equation}
\label{HoEq}
\textstyle\calH_\o
=
\{\vp\in C^{\infty}(M) \,:\, \omega_\vp:= \omega+\i\ddbar\vp>0\}
\end{equation}
of \kahler metrics on a compact \K manifold $(M, \omega)$
in the  class of $\omega$, where $\calH_\o$ is equipped with the
metric \cite{M,S,D1}
$$
\gM(\zeta,\eta)_{\vp}:= \int_M
\zeta\eta\, {\omega_{\vp}^m},\quad \vp \in \hcal_{\o},\quad
\zeta,\eta \in T_{\vp} \hcal_{\o}\isom C^\infty(M).
$$
This initial value problem is a special case of the Cauchy problem for
the homogeneous complex/real Monge--Amp\`ere equation (HCMA/HRMA).  
The IVP is long believed to be ill-posed, and a  motivating  problem is to prove
that this is indeed the case, to determine
which initial data give rise to solutions, especially those
of relevance in geometry (`geodesic rays'), to construct the
solutions,  and to determine the lifespan $T_\span$ of solutions
for general initial data.  

In this article, we prove a number of results on the lifespan, regularity, and uniqueness of solutions of the Cauchy
problem for the HCMA and the HRMA equations under various a priori regularity conditions.
The results  are  based on a study of the `characteristics' of the HCMA/HRMA equations,
or more precisely on the relations between solutions of 
these equations
and Hamiltonian mechanics,  and to solutions of related Hamilton--Jacobi equations. 

First, we characterize the $C^3$ lifespan of the HCMA and prove uniqueness
of classical solutions. We then introduce the notion
of a leafwise subsolution of the HCMA that generalizes the notion of a solution, 
and derive obstructions to its existence. This can be considered as a method
of `complex characteristics'.
Combining these results we establish that the IVP for the HCMA is locally ill-posed in $C^3$.
This puts a restriction on Cauchy data, and addresses questions about
the Cauchy problem raised by the work of Mabuchi, Semmes, and Donaldson 
\cite[p. 238]{M},\cite{S},\cite[p. 27]{D1}. 
We then study the notion of a leafwise subsolution for the HRMA,
and prove its uniqueness. This allows us to characterize the Legendre
transform subsolution of the prequels \cite{RZ1,RZ2}, and determine the $C^1$ lifespan
of the HRMA. A key ingredient here
is an apparently new connection between HRMA and Hamilton--Jacobi equations. 

\subsection{Obstructions to solvability, uniqueness, and the smooth lifespan of the HCMA}

We begin in the complex domain, where 
 Semmes and Donaldson \cite{S,D1}  gave a  formal solution of the IVP 
in terms of holomorphic characteristics.
Namely, the  Cauchy data $(\omega_{\vp_0}, \dot{\vp}_0)$ of the IVP determines a Hamiltonian flow $\exp t X_{\dot{\vp}_0}^{\omega_{\vp_0}}$.
If the orbits $\exp t X_{\dot{\vp}_0}^{\omega_{\vp_0}} z$ of the flow admit analytic continuations in time up
to imaginary time $T$, one obtains
a family of maps 
\begin{equation} \label
{MAPS} 
f_\tau(z) = \exp -\sqrt{-1} \tau  X_{\dot{\vp}_0}^{\omega_{\vp_0}} z: S_T \times M \to M,
\end{equation}
where 
$$
S_T=[0,T]\times\RR,
$$
with $\tau=s+\i t\in S_T$, $s\in[0,T]$ and $t\in\RR$.
The formal solution $\vp_s$ is then given by the formula,
\begin{equation} \label
{FORMAL}
(f_s^{-1})^\star  \omega_{\vp_0} - \omega_{\vp_0} = \i \ddbar \vp_s, \quad s\in[0,T].
\end{equation}

There are several obstructions to solving the IVP in this manner, which must vanish if there exists a
$C^3$ solution.  The most obvious one is that the Hamilton orbits need
to possess  analytic continuations to a strip $S_T$. 
This analytic extension problem for orbits
should already be an ill-posed problem, and we say that the Cauchy data is 
``$T$-good" if the extension exists and $f_s$ is smooth (see Definitions 
\ref{TGoodDef}--\ref{TGoodDef}).   
This is a Cauchy problem for a holomorphic map into a nonlinear space, and we do 
not study it directly here; but in \S \ref{1.2} we describe some results on obstructions
to closely related linear Cauchy problems.

In several settings, such 
as torus-invariant Cauchy data on toric varieties, the Hamilton orbits for smooth
Cauchy data do
possess analytic continuations (see Proposition \ref{ToricLifespanProp} below). 
As the following theorem shows, the only additional obstruction
to solving the HCMA smoothly is that the space-time complex Hamilton orbits may intersect.   
To state the result precisely,
let $(M,J,\o)$ be  a compact closed connected \K manifold 
of complex dimension $n$. 
The IVP for geodesics is equivalent to the following
Cauchy problem for the HCMA 
\begin{equation}
\label{HCMARayEq}
\begin{aligned}
(\pi_2^\star\omega + \i\ddbar \vp)^{n+1}
=
0,
\quad 
(\pi_2^\star\omega + \i\ddbar \vp)^{n}
\ne
0,
\;\; &\mbox{on} \; S_{T} \times M,
\cr
\vp(0,t,\,\cdot\,)
 =
\vp_0(\,\cdot\,), \quad
\partial_s\vp(0,t,\,\cdot\,)
=
\dot\vp_0(\,\cdot\,), \;\; &\mbox{on} \; \{0\}\times\RR \times M.
\end{aligned}
\end{equation}
\noindent
where $\pi_2: S_{T} \times M \to M$ is the projection, and
where $\vp$ is 
is required to be $\pi_2^\star\o$-plurisubharmonic (psh) on $S_T\times M$.
The rest of the notions in the following theorem are
defined in \S\ref{HamFlowsHCMASection}.

\begin{thm}
\label
{HCMACauchyCthreeThm} {\rm (Smooth lifespan and uniqueness)}
Let $(M,\omega_{\vp_0})$ be a compact \K manifold.
The Cauchy problem (\ref{HCMARayEq}) with $\omega_{\vp_0}\in C^1$
and $\dot\vp_0\in C^3(M)$ has a 
solution in $C^3(S_T \times M)\cap PSH(S_T \times M,\pi_2^\star\o)$  if and only if the Cauchy data is $T$-good
and the maps $f_s$ defined by \eqref{MAPS}  are $C^1$ and admit a $C^1$ 
inverse for each $s\in[0,T]$. The solution is unique in
$C^3(S_T \times M)\cap PSH(S_T\times M,\pi_2^\star\o)$.

\end{thm}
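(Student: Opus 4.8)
The plan is to establish both directions by directly relating a $C^3$ solution $\vp$ of the HCMA Cauchy problem to the complexified Hamilton flow of $(\om_{\vp_0},\dot\vp_0)$. The key observation, going back to Semmes--Donaldson and to be developed in \S\ref{HamFlowsHCMASection}, is that the homogeneous Monge--Amp\`ere equation $(\pi_2^\star\om+\i\ddbar\vp)^{n+1}=0$ with the nondegeneracy condition says exactly that the graph of $\partial_s\vp + \i\partial_t\vp$ (as a function on $S_T\times M$) foliates $S_T\times M$ by holomorphic discs (Riemann surfaces $\tau\mapsto(\tau,u_\tau)$) along which $\om_\vp$ is the pullback of $\om_{\vp_0}$. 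First I would extract, from such a $C^3$ solution, the disc through each point and show that its ``spatial'' slices define maps $g_s\colon M\to M$; the Cauchy data forces $g_0=\id$, and differentiating the foliation equation in $s$ at $s=0$ shows $\frac{d}{ds}g_s|_{s=0}$ is (up to sign) the Hamiltonian vector field $X_{\dot\vp_0}^{\om_{\vp_0}}$. Then one argues that along each leaf the flow equation $\partial_\tau g_\tau = -\i X_{\dot\vp_0}^{\om_{\vp_0}}\circ g_\tau$ holds in the complexified sense, i.e. $g_\tau$ is the holomorphic-in-$\tau$ continuation $f_\tau$ of \eqref{MAPS}. This requires the Cauchy data to be $T$-good, and $C^3$ regularity of $\vp$ transfers to $C^1$ regularity of the $f_s$; since the solution is a genuine (nondegenerate) foliation, $f_s$ must be a $C^1$ diffeomorphism, giving a $C^1$ inverse. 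Conversely, if the Cauchy data is $T$-good and the $f_s$ are $C^1$ with $C^1$ inverse, then \eqref{FORMAL} defines $\vp_s$ (the right-hand side is $\ddbar$-exact because $f_s$ is a diffeomorphism isotopic to the identity and preserves the cohomology class, so $\om_{\vp_0}-(f_s^{-1})^\star\om_{\vp_0}$ is exact of type $(1,1)$), and one checks that the resulting $\vp$ on $S_T\times M$ satisfies the HCMA by construction of the characteristic foliation and has the prescribed Cauchy data; the regularity bookkeeping shows $\vp\in C^3$.

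For uniqueness, I would use the conservation-law/energy argument advertised in the abstract. Given two solutions $\vp^{(1)},\vp^{(2)}\in C^3(S_T\times M)\cap PSH$ with the same Cauchy data, set $\psi = \vp^{(1)}-\vp^{(2)}$. Along the common characteristic foliation (or using that each solution produces a foliation and comparing), one shows that the quantity $\int_M |\nabla\psi|^2\,\om^n$ or an appropriate fiberwise $L^2$ norm is conserved in $s$ — this is where the homogeneous MA equation behaves like a geodesic equation and the difference satisfies a linear equation with no zeroth-order forcing — and since it vanishes at $s=0$ (equal Cauchy data: $\psi(0,\cdot)=0$ and $\partial_s\psi(0,\cdot)=0$) it vanishes for all $s$, whence $\psi\equiv 0$. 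Concretely, on each leaf $\vp^{(i)}$ restricts to a harmonic function of $\tau$ plus the pullback of $\vp_0$ under $f_\tau^{-1}$ composed appropriately, and the boundary data at $s=0$ pins it down uniquely by the Cauchy--Kovalevskaya-type uniqueness for the leaf ODE; I would give whichever of these two packagings is cleanest with the machinery of \S\ref{HamFlowsHCMASection}.

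The main obstacle, and where the real work lies, is the first direction: proving that an \emph{a priori} merely $C^3$ solution $\vp$ actually produces the \emph{analytic} continuation $f_\tau$ of the Hamilton flow, i.e. that the characteristic leaves of a $C^3$ solution are genuinely holomorphic discs on which the dynamics is the complexified Hamiltonian flow rather than some weaker object. One must show that the foliation coming from $\ker(\pi_2^\star\om+\i\ddbar\vp)$ is by holomorphic curves (this uses the nondegeneracy $(\pi_2^\star\om+\i\ddbar\vp)^n\ne0$ together with $\vp$ being $\pi_2^\star\om$-psh, so the null foliation is a complex line field, hence integrable to holomorphic discs by Nirenberg/Newlander--type or by direct computation), and then that restricting the ``position'' along these discs recovers an honest holomorphic-in-$\tau$ map agreeing with the real Hamilton flow on the initial slice — forcing $T$-goodness as a necessary condition. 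The regularity is delicate because one is extracting a holomorphic object from finitely-differentiable data, so the argument must be careful to only claim the $C^1$ regularity of $f_s$ that is actually available, and to verify that this is exactly enough to run the converse construction. I expect that once \S\ref{HamFlowsHCMASection} sets up the correspondence ``$C^3$ HCMA solution $\leftrightarrow$ holomorphic foliation $\leftrightarrow$ complexified Hamilton flow'', both directions and uniqueness follow by the dictionary above with only routine verification.
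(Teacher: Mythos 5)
Your two solvability directions follow essentially the paper's own route: extracting the Monge--Amp\`ere foliation from $\ker(\pi_2^\star\o+\i\ddbar\vp)$ and identifying the leaf maps with the complexified Hamiltonian flow is \S\ref{HCMAInvertibilityMoserSubsection}, and the converse via \eqref{FORMAL} is Lemma \ref{ExistenceGivenTGoodInvertibleLemma} (the paper writes the potential explicitly through a Green's operator precisely to control the $C^3$ regularity and to check the initial conditions with the $\ddbar$-lemma, but the idea is the one you describe).

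The uniqueness part, however, has a genuine gap. Your first packaging --- a conserved fiberwise energy $\int_M|\nabla\psi|^2\o^n$ for the difference $\psi$ of two solutions --- would fail: the Cauchy problem here is elliptic in character (on each leaf it is the Cauchy problem for a Laplace/Poisson equation on a strip, Hadamard's classical ill-posed problem), so there is no $s$-evolution along which such a quantity is propagated, and the difference of two solutions of the fully nonlinear degenerate equation does not satisfy a linear equation admitting an energy identity; ``vanishes at $s=0$, hence forever'' is exactly what must be proved and cannot be obtained this way. The conservation law advertised in the abstract, $\dot\vp_s\circ f_s=\dot\vp_0$ \eqref{ConservationLawHCMAEq}, is used in the paper for a different purpose: it gives the a priori bounds $0<c<a_z<C$ on the leafwise conformal factor \eqref{AFunctionDefEq}, which is what prevents the leafwise elliptic problems from degenerating.

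Your second packaging (leafwise comparison) is the right idea but, as stated, omits the three steps that make it work. First, one must show the two solutions define the \emph{same} foliation: the paper proves they agree to second order along $\Sigma=\{0\}\times\RR\times M$ using Semmes's identity \eqref{SemmesGeodEq} at $s=0$, and this needs care because an arbitrary solution is not known a priori to be $\RR$-invariant, so the identities of \S\ref{HCMAInvertibilityMoserSubsection} (derived under $\RR$-invariance, which itself rests on the uniqueness lemma) cannot be invoked --- indeed without $\RR$-invariance the HCMA is the WZW equation, not the geodesic equation. Second, uniqueness along a leaf is not a ``Cauchy--Kovalevskaya-type'' statement: the solutions are merely $C^3$ and bounded, and the leaf problem \eqref{CauchyLaplaceLeaveLocalEq} is a Cauchy problem for the Laplace equation on a \emph{non-compact} strip; the paper has to prove a dedicated uniqueness result for bounded solutions there (Lemma \ref{LaplaceUniquenessLemma}, via Widder's representation formula), since no local analytic-uniqueness theorem applies. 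Third, because $a_z$ varies from leaf to leaf and trivial leaves must be handled separately, the argument is run first for short time (where continuity bounds $a_z$ from below on non-trivial leaves), which yields $\RR$-invariance and hence \eqref{SemmesGeodGlobalEq} and \eqref{ConservationLawHCMAEq}, which in turn give the global a priori bound on $a_z$ allowing the leafwise uniqueness to be propagated to all of $S_T$. Without these ingredients the uniqueness claim in your sketch does not close.
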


This result is important in clarifying the nature of the obstructions to solving the HCMA.
The existence proof follows by a rather straightforward combination of the Semmes--Donaldson arguments \cite{S,D1}.
The uniqueness proof, somewhat surprisingly, does not readily adapt from the $\CC^n$ setting studied
by Bedford--Burns \cite{BB}. 
Unlike in their setting, the proof is not local in nature, 
and requires a global conservation
law type argument. The key difference is that the stripwise equations vary from leaf to leaf,
and one has to prove an a priori estimate that ensures that the stripwise elliptic problems
are not degenerating. 
The uniqueness proof is also completely different from the corresponding proof 
for the Dirichlet problem, where the maximum principle is available.

Henceforth, we describe breakdown in time of solutions in terms of 
lifespans. 
\begin{definition}
Let the $C^{k,\alpha}$ lifespan $T^{k,\alpha}_\span$ (respectively, lifespan
$T_\span$) of the Cauchy problem
(\ref{HCMARayEq}) be the supremum over all $T\ge 0$ such that
(\ref{HCMARayEq}) admits a solution in $C^{k,\alpha}(S_T \times M)\cap PSH(S_T\times M,\pi_2^\star\o)$ 
(respectively, in  $PSH(S_T\times M,\pi_2^\star\o))$.
\end{definition}

We thus have the following characterization of the smooth lifespan of the HCMA.
The same result holds also for the $C^3$ lifespan $T^3_\span$.

\begin{cor}
\label{HCMACauchyLifespanCor} %

The smooth lifespan $T^\infty_\span$ of the Cauchy problem
(\ref{HCMARayEq}) with smooth initial data is the supremum over
$T\ge 0$ such that the Cauchy problem is 
$T$-good and the maps $f_s$ defined by (\ref{FsMapEq}) 
are smoothly invertible for each $s\in[0,T]$.

\end{cor}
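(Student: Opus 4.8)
The plan is to deduce Corollary~\ref{HCMACauchyLifespanCor} directly from Theorem~\ref{HCMACauchyCthreeThm}, upgrading $C^3$ to $C^\infty$ by a short regularity bootstrap. Write $T_\ast$ for the supremum appearing in the statement: the largest $T\ge 0$ for which the Cauchy data is $T$-good and $f_s$ is smoothly invertible for every $s\in[0,T]$. It suffices to prove $T^\infty_\span=T_\ast$, and the same reasoning will give $T^3_\span=T_\ast$ as asserted in the remark.

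\emph{First I would show $T^\infty_\span\le T_\ast$ (and $T^3_\span\le T_\ast$).} Let $0\le T<T^\infty_\span$. Then \eqref{HCMARayEq} has a solution on $S_{T'}\times M$ for some $T'\in(T,T^\infty_\span)$, and its restriction to $S_T\times M$ is a solution in $C^\infty(S_T\times M)\cap PSH(S_T\times M,\pi_2^\star\o)$; in particular it is a $C^3$ solution. By Theorem~\ref{HCMACauchyCthreeThm} the data is then $T$-good and, for each $s\in[0,T]$, $f_s$ is $C^1$ and admits a $C^1$ inverse. By the definition of $T$-goodness the $f_s$ are in fact smooth, and since each has a $C^1$ inverse its differential is nowhere singular on $M$; the parametric inverse function theorem then makes each $f_s^{-1}$ smooth, jointly in $(s,t)$. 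Hence $f_s$ is smoothly invertible for $s\in[0,T]$, so $T\le T_\ast$, and letting $T\uparrow T^\infty_\span$ gives $T^\infty_\span\le T_\ast$. The identical argument, starting from a $C^3$ rather than a $C^\infty$ solution, gives $T^3_\span\le T_\ast$.

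\emph{Next I would show $T_\ast\le T^\infty_\span$ (hence also $T_\ast\le T^3_\span$).} Let $0\le T<T_\ast$, so the data is $T$-good and $f_s^{-1}\in C^\infty$ for all $s\in[0,T]$. As in the existence half of Theorem~\ref{HCMACauchyCthreeThm}, that is, the Semmes--Donaldson construction \cite{S,D1}, the forms $(f_s^{-1})^\star\o_{\vp_0}$ constitute a smooth family of \kahler forms on $M$ in the class of $\o_{\vp_0}$; by the $\ddbar$-lemma with smooth dependence on parameters they equal $\o_{\vp_0}+\i\ddbar\vp_s$ for some real-valued $\vp\in C^\infty(S_T\times M)$, determined up to an additive function of $(s,t)$, which one fixes using the Cauchy data. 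One then checks, exactly as in \cite{S,D1}, that this $\vp$ is $\pi_2^\star\o$-psh and solves \eqref{HCMARayEq} on all of $S_T\times M$ with the prescribed initial data; this is a smooth, hence $C^3$, solution, so $T\le T^\infty_\span$ and $T\le T^3_\span$. Letting $T\uparrow T_\ast$ and combining with the previous step yields $T^\infty_\span=T^3_\span=T_\ast$.

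The only step needing genuine attention is the second one: one must verify that the stripwise identity \eqref{FORMAL} assembles into a single $\pi_2^\star\o$-psh solution of the \emph{degenerate} equation \eqref{HCMARayEq} across all of $S_T\times M$, and that smoothness of the maps $f_s^{-1}$ transfers to the potential $\vp$ with no loss of derivatives --- the latter because the $\ddbar$-potential of a smooth exact $(1,1)$-form on a compact \kahler manifold is smooth (elliptic regularity for the Laplacian, applied also jointly in $(s,t)$). Both points are precisely the Semmes--Donaldson computations already used for Theorem~\ref{HCMACauchyCthreeThm}, so they present no essentially new difficulty; the first step above is immediate from the inverse function theorem. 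In this sense the corollary is a repackaging of Theorem~\ref{HCMACauchyCthreeThm} once one observes that, for smooth $f_s$, possessing a $C^1$ inverse and possessing a $C^\infty$ inverse are equivalent.
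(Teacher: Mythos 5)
Your overall scheme is the one the paper intends: Corollary \ref{HCMACauchyLifespanCor} is stated without a separate proof, as an immediate consequence of Theorem \ref{HCMACauchyCthreeThm}, the existence half in the smooth category being supplied by Lemma \ref{ExistenceGivenTGoodInvertibleLemma}, whose explicit formula \eqref{ExplicitGreenFunctionSolEq} (a pseudodifferential operator of order $-2$ applied to $(f_\tau^{-1})^\star\o_{\vp_0}-\o_{\vp_0}$) visibly produces a $C^\infty$ solution once the data and the inverse Moser maps are smooth. Your second direction is therefore fine, modulo the small remark that the construction uses invertibility of $f_\tau$ for all $\tau\in S_T$ and not only for real $s$ (harmless, since the imaginary-time factor is a Hamiltonian flow, cf.\ \eqref{MoserDiffDecompositionEq}--\eqref{h}, but worth a sentence).

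The flaw is in the first direction, at the sentence ``by the definition of $T$-goodness the $f_s$ are in fact smooth.'' Definition \ref{TGoodDef} requires only that each Moser map $f_\tau$ be a \emph{differentiable} map of $M$; smoothness in $z$ of the analytically continued flow is not part of $T$-goodness, and it is not automatic from smoothness of the Cauchy data (which is precisely why differentiability is imposed in the definition at all). Since your entire upgrade from what Theorem \ref{HCMACauchyCthreeThm} delivers ($f_s$ of class $C^1$ with a $C^1$ inverse) to what the corollary asserts (smooth invertibility) hinges on $f_s$ being smooth, this step needs a genuine argument, and the correct source is the smooth solution itself: when $\vp\in C^\infty(S_T\times M)$ solves \eqref{HCMARayEq}, the kernel of $\pi_2^\star\o+\i\ddbar\vp$ is a smooth line distribution (by nondegeneracy on the $M$-slices), so its leaves \eqref{GAMMADEF} depend smoothly on the base point, and hence $f_\tau(z)=\Gamma_z(\tau)$ is smooth in $z$; that is, one reruns the argument of \S\ref{HCMAInvertibilityMoserSubsection} with $C^\infty$ in place of $C^3$ regularity. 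With that substitute justification, your inverse-function-theorem upgrade of the $C^1$ inverse to a smooth inverse is correct, and the corollary follows as you say, including the statement for $T^3_\span$.
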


\subsection{\label{1.2} Leafwise subsolutions for the HCMA and ill-posedness}

In the apparent absence of weak solutions beyond the convex lifespan, 
motivated by the detailled results of the prequel \cite{RZ2} on the Legendre
subsolution in the special case of the HRMA, we are led to introduce a notion 
of a {\it leafwise subsolution}, that should be
an ``optimal" subsolution in some situations.

\begin{definition}
\label{OptimalSubsolutionDef}
Assume that the Cauchy problem (\ref{HCMARayEq}) is $T$-good.
We call a $\pi_2^\star\o$-psh function $\vp$ on $S_T\times M$ a
{\rm $T$-leafwise subsolution} of the HCMA (\ref{HCMARayEq})
if it satisfies the initial conditions of the Cauchy problem (\ref{HCMARayEq}),
if $(\pi_2^\star\o+\i\ddbar\vp)^n\ne0$,
and if for each $z\in M$, we have
\begin{equation}
\label
{LeafwiseDefEq}
\gamma_z^\star(\pi_2^\star\o+\i\ddbar\vp)=0,
\end{equation}
where $\gamma_z(\tau)=(\tau,  f_{\tau}(z))$, and
$f_\tau(z):= \exp-\i \tau
X_{\dot\vp_0}^{\omega_{\vp_0}}.z$.
\end{definition}

The proof of Theorem \ref{HCMACauchyCthreeThm} shows that a $C^3$ solution 
on $[0,T]\times M$ is a $T$-leafwise subsolution, but ``leafwise subsolutions" 
are more general: a subsolution of HCMA may solve \eqref{LeafwiseDefEq}
along leaves without solving the HCMA globally since the invertibility condition 
on $f_s$ in Theorem \ref{HCMACauchyCthreeThm} may fail, e.g., when the
leaves intersect.

One of our main results is that  the problem of existence of a leafwise subsolution
for the Cauchy problem for the HCMA
is already locally ill-posed in time.
In particular, this implies the ill-posedness in $C^3$ of the Cauchy
problem for the HCMA itself.

\begin{thm} \label{GENERIC} {\rm (Local ill-posedness)}
For each 
${\vp_0} \in \hcal_{\omega}$  
there exists a dense set of $\dot\vp_0\in C^3(M)$ 
for which  $T_\span^\infty=T^3_\span=0$,
i.e., the IVP \eqref{HCMARayEq} admits no $C^3$ solution for any $T>0$.
\end{thm}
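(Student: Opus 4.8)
The plan is to reduce the statement to a genericity result about the invertibility of the Moser-type maps $f_s$ and the good-ness of the Cauchy data, exploiting Theorem \ref{HCMACauchyCthreeThm}. Fix $\vp_0\in\hcal_\omega$. By Theorem \ref{HCMACauchyCthreeThm}, a $C^3$ solution of \eqref{HCMARayEq} on $S_T\times M$ exists for some $T>0$ if and only if the Cauchy data $(\omega_{\vp_0},\dot\vp_0)$ is $T$-good and each $f_s$, $s\in[0,T]$, is a $C^1$ diffeomorphism. Hence to prove $T^\infty_\span=T^3_\span=0$ for a given $\dot\vp_0$ it suffices to show that \emph{either} the Hamilton orbits $\exp tX_{\dot\vp_0}^{\omega_{\vp_0}}z$ fail to admit analytic continuation into any strip $S_T$ with $T>0$ (so the data is not $T$-good for any $T>0$), \emph{or} the map $f_s$ fails to be injective for every $s>0$. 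The strategy is to produce, in any $C^3$-neighborhood of an arbitrary $\dot\vp_0$, a perturbation for which the second alternative holds already at infinitesimally small times.

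The concrete mechanism I would use is the following. The linearization at $s=0$ of the map $f_s$ is governed by the Hamiltonian vector field $X_{\dot\vp_0}^{\omega_{\vp_0}}$; the obstruction to $f_s$ being locally injective for small $s>0$ is controlled by the (complexified) Jacobian, equivalently by whether the complexified flow $\exp-\i\tau X_{\dot\vp_0}^{\omega_{\vp_0}}$ is locally one-to-one. I would first establish, by a normal-form or power-series computation along a single leaf, a criterion of the form: if the function $\dot\vp_0$ has a nondegenerate critical point $p$ at which a certain curvature/Hessian quantity built from $\Hess\dot\vp_0(p)$ and $\omega_{\vp_0}$ has the ``wrong sign'' (a focusing condition), then two nearby complex characteristics issuing from points near $p$ meet in arbitrarily short complex time, forcing $f_s$ to be non-injective for all $s>0$. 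This is precisely the ``intersection of complex time characteristics'' obstruction advertised in the abstract. The point is that this focusing condition is \emph{open} (it persists under small $C^3$ perturbations) and can be arranged: starting from any $\dot\vp_0$, one adds a small bump supported near a chosen point to create such a bad critical point, staying within any prescribed $C^3$ ball. Density then follows because the perturbation can be made arbitrarily small in $C^3$.

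The main obstacle, I expect, is making the ``short-time intersection of complex characteristics'' argument rigorous and quantitative: one must show that the analytic continuation of the Hamilton orbits, restricted to a small strip, develops a genuine collision (or a genuine failure of $f_s$ to have a $C^1$ inverse) rather than merely a higher-order degeneracy that could be absent in the perturbation, and one must control this uniformly for all small $s>0$, not just infinitesimally. Handling the two alternatives uniformly is delicate because whether the data is $T$-good is itself a subtle analytic-continuation condition; the cleanest route is to choose the perturbation so that the orbits \emph{do} continue (e.g.\ by taking $\dot\vp_0$ real-analytic, or by invoking a setting like Proposition \ref{ToricLifespanProp} where continuation is automatic), thereby isolating the single obstruction of non-invertibility of $f_s$, and then to show that obstruction is generic and immediate. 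A secondary technical point is to upgrade the genericity from a single leaf to all of $M$ and to verify that the failure of the $C^1$ inverse of $f_s$ indeed obstructs the existence of a $C^3$ $\pi_2^\star\omega$-psh solution and not merely a solution of some weaker regularity — but this last step is exactly what Theorem \ref{HCMACauchyCthreeThm} supplies.
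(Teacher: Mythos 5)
Your reduction via Theorem \ref{HCMACauchyCthreeThm} is correct, but the mechanism you then choose to exploit --- non-invertibility of the Moser maps $f_s$ for \emph{every} $s>0$, created by a ``focusing'' perturbation near a nondegenerate critical point --- cannot work, and it is in fact the opposite of what the paper does. Since $f_0=\id$ and $\frac{df_s}{ds}\big|_{s=0}=-\nabla_{g_{\vp_0}}\dot\vp_0$, in precisely the settings you propose in order to isolate the invertibility obstruction (data for which the analytic continuation of orbits is automatic, e.g.\ real-analytic or toric data as in Proposition \ref{ToricLifespanProp}), the maps $f_s$ are $C^1$-close to the identity for small $s$ and hence invertible; intersection of complex-time characteristics is a caustic-type phenomenon that needs a definite positive time to develop. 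The paper makes this quantitative in the toric case: $f_s$ is invertible exactly for $s<T^\cvx_\span$ and $T^\cvx_\span>0$, so the smooth lifespan there is \emph{positive} (Proposition \ref{ToricLifespanProp}, Corollary \ref{ToricGeodesicRayCor}), and similarly sufficiently regular data with automatic continuation has $T^3_\span>0$ by Theorem \ref{HCMACauchyCthreeThm}. So your ``cleanest route'' produces Cauchy data with positive lifespan, and no small bump can force $f_s$ to be non-injective for all $s>0$. A dense set with $T^3_\span=0$ must therefore be extracted from the other obstruction, the failure of leafwise analytic continuation, which your proposal defers as ``subtle'' and never addresses.

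That continuation obstruction is where the actual proof lives. From a $C^3$ solution the paper extracts, for each $z$, a Cauchy problem for a Poisson equation on the strip along the leaf through $z$, and Proposition \ref{LeafwiseProp} shows that a specific Dirichlet-to-Neumann type discrepancy $q_z-p_z$ of the Cauchy data must lie in the Paley--Wiener class $PW_T(\RR)$, i.e.\ continue analytically to a two-sided strip of width $2T$. The naive Hadamard argument fails because perturbing $\dot\vp_0$ moves the leaves themselves; the paper circumvents this by perturbing $H=\dot\vp_0$ to $H_\eps=H+\eps(H-H(z))h$, which fixes the level set through $z$ and only reparametrizes the orbit, then differentiates the Paley--Wiener condition in $\eps$, and uses the Hilbert transform (Lemma \ref{HilbertLemma}) together with the case $h\equiv1$ (Lemma \ref{hzACLemma}) to conclude that $\hat\Gamma_z^\star h$ must be real analytic on an interval $[\alpha,\beta]$ independent of $T$. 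Density of bad data then follows because the restriction map $C^3(M)\to C^3([\alpha,\beta])$ is open and surjective while the real-analytic functions have empty interior there. None of these ingredients appears in your outline, and without them (or a genuine substitute attacking the continuation obstruction) the theorem is out of reach. A minor further point: the ``upgrade from a single leaf to all of $M$'' you worry about is unnecessary --- one bad leaf already obstructs a global $C^3$ solution, since such a solution restricts to a solution of the leafwise problem on every leaf.
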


The proof is given in Section  \ref{LeafwiseSection}. 
The Cauchy problem
for the HCMA is a multi-dimensional, nonlinear generalization of
the Cauchy problem for the Laplace equation 
 $\Delta u = 0$ on each strip $S_T$, which is 
one of the classic ill-posed problems of Hadamard 
\cite{H,L,P}.  One might think that   Theorem \ref{GENERIC} could
be obtained directly  from the 
well-known ill-posedness of the Cauchy problem for the Laplace equation
on a strip.

 However, this is not the case:  the leafwise equations \eqref{LeafwiseDefEq}
are inhomogeneous and depend on the solution of the HCMA itself.
Second, and perhaps more basic, is that the strip on which the problem is posed depends on the solution
of the HCMA. The standard argument of Hadamard 
(for the classical Laplace equation) 
of perturbing the Cauchy data
so as to lie outside the range of the 
Dirichlet-to-Neumann operator therefore cannot be applied directly
as it would also perturb the leaves themselves!

The actual proof  does employ
the Dirichlet-to-Neumann operators along each leaf but also uses a geometric perturbation argument. 
First,   we analyze the obstructions for a leafwise subsolution
in detail, and show that, for each $z \in M$, the pull-back  
of a leafwise subsolution under $\gamma_z$ (the map of restriction
to the leaf through $z$ defined in Definition \ref{OptimalSubsolutionDef}) satisfies
a certain real-analyticity condition on the initial boundary $\{0\}\times\RR$ of $S_T$; 
more precisely,
a certain function 
of the Cauchy data is real
analytic on $\R$ and possesses an analytic continuation to a {\it two-sided} 
strip $[-T,T]\times \RR$  
of width precisely $T$. We refer to Proposition \ref{LeafwiseProp} for the precise statement.
Second, we combine 
Theorem \ref{HCMACauchyCthreeThm} and Proposition \ref{LeafwiseProp}
with a geometric perturbation argument and basic properties
of the Hilbert transform to derive a real-analyticity condition,
independent of $T$.

Theorem \ref{GENERIC} is thus based on the analytic continuation obstruction of 
 Theorem \ref{HCMACauchyCthreeThm}. In the remainder of the paper we concentrate
on the second obstruction, i.e., the  invertibility of the Moser maps 
$f_s$ appearing in Theorem \ref{HCMACauchyCthreeThm}.
It is present even in the simplest case of toric K\"ahler manifolds. 
As we will see, even  when the strip-wise Cauchy problems can all be solved, there does not generally exist a global in time solution
of the HCMA.

\subsection{Complementary results for the HRMA}

The local ill-posedness result, Theorem \ref{GENERIC}, does not apply   to the study of the HRMA. The Cauchy problem
for the HRMA arises precisely when the Cauchy data is torus-invariant,
which is, of course, non-generic in the space of all possible Cauchy data.
And in fact, the Cauchy problem for the HRMA has a positive smooth
lifespan 
\cite{RZ2}.  Moreover, as we observe in Proposition \ref{ToricLifespanProp},
there is no obstruction to analytically continuing orbits. In the remainder of the article
our goal is thus to derive results for the HRMA
that are somewhat of a complementary nature to those for the HCMA described
above. First, we would like to understand how our characterization
of the smooth (or $C^3$) lifespan specializes to the setting of the HRMA. Second,
we would like to understand lifespan of solutions
with less regularity than that described in 
Theorem~\ref{HCMACauchyCthreeThm}, that is less than $C^3$.

Theorem \ref{HCMACauchyCthreeThm} clarifies  the breakdown
of classical solutions of the Cauchy problem for the HCMA already in the toric case.
When the Cauchy data is $(S^1)^n$-invariant, the equation reduces to the HRMA
\begin{equation}
\label{HRMARayEq}
\begin{aligned}
\!\!\h{\rm MA}\, \psi
=
0, \;\,\mskip2mu &\mbox{on} \; [0,T] \times \RR^n,
\quad\;
\psi(0,\,\cdot\,)
=
\psi_0(\,\cdot\,),
\quad\;
\partial_s\psi(0,\,\cdot\,)
=
\dot\psi_0(\,\cdot\,), \;\; \mbox{on} \; \RR^n,
\end{aligned}
\end{equation}
that describes geodesics in the space of toric \K metrics, where $\h{MA}$
denotes the real \MA operator that 
associates a Borel measure to a convex function 
and equals $\det\nabla^2 f\, dx^1\w\cdots\w dx^{n+1}$ on $C^2$ functions
(see \cite[\S2.2]{RZ2} and \S\ref{ConeLifespanSection}).
Here $\psi_0$ is a smooth strictly convex function (moreover,
with strictly positive Hessian) on $\RR^n$ that corresponds to
a torus-invariant \K metric, i.e.,
$\o_{\vp_0}=\i\ddbar\psi_0$ over the 
open orbit and $\dot\vp_0$ is a smooth torus-invariant function on $M$,
considered as a smooth bounded function $\dot\psi_0$ on $\RR^n$. Thus, we view $\RR^n$
as the real slice of $M$ (minus its divisor at infinity). We also refer to the real slices of the leaves
of the \MA foliation as leaves.
Also, $\overline{\Im\nabla\psi_0}=P$ is a compact convex polytope in $\RR^n$.
Translating the definition of $\pi_2^*\omega_0$-psh  solutions to the HCMA \eqref{HCMARayEq}
to the real setting yields a corresponding class for the HRMA. 

Before defining the class we recall the   definition of the \MA operator. 
Let $M(\RR^{n+1})$ denote the space of differential forms of degree $n+1$ on $\RR^{n+1}$
whose coefficients are Borel measures (i.e., currents of degree $n+1$ and order 0).

\begin{prop} {\rm (See \cite[Proposition 3.1]{RT})}
Define by 
$$
\h{\rm MA} f:=
d\frac{\partial f}{\partial x^1}\w \cdots\w d\frac{\partial f}{\partial x^{n+1}},
$$
an operator $\h{\rm MA}: C^2(\RR^{n+1})\ra M(\RR^{n+1})$.
Then $\h{\rm MA}$ has a unique extension to a continuous operator on the cone
of
convex functions.
\end{prop}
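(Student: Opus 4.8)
The plan is to realize the sought extension as the Aleksandrov Monge--Amp\`ere measure and then to reduce the asserted continuity to the classical weak continuity of that measure. For a finite (hence continuous) convex function $u$ on $\RR^{n+1}$, let $\partial u(x)$ denote the subdifferential at $x$, and for Borel $E$ put $\mu_u(E):=|\partial u(E)|$, the Lebesgue measure of the normal image of $E$. One checks in the standard way that $\mu_u$ is a nonnegative Borel measure depending only on $u$, locally finite, with mass over a ball $B_r$ bounded by $C(\operatorname{osc}_{B_{2r}}u/r)^{n+1}$, because convexity confines $\partial u(B_r)$ to a ball of radius $\operatorname{osc}_{B_{2r}}u/r$. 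I read the topologies in the statement as local uniform convergence on the cone of convex functions and the weak-$*$ topology on $M(\RR^{n+1})$. Granting that $\mu_u$ agrees with $\h{\rm MA}\,u$ on $C^2$ convex functions and that $u\mapsto\mu_u$ is weak-$*$ continuous --- both addressed below --- the proposition follows at once: $u\mapsto\mu_u$ is then a continuous extension, and it is the unique one, since the mollifications $u*\rho_\ep$ exhibit every convex $u$ as a locally uniform limit of smooth convex functions, so any continuous extension is pinned down by its restriction to $C^2$ convex functions.

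That $\mu_u=\h{\rm MA}\,u$ for $u\in C^2$ convex is a consequence of the area formula. Here $\partial u=\{\nabla u\}$ is $C^1$ and $\det\nabla^2u\ge0$ by convexity, so $\int_E\det\nabla^2u\,dx=\int_{\RR^{n+1}}\#(E\cap(\nabla u)^{-1}(y))\,dy$; and if $\nabla u(x)=\nabla u(x')$ with $x\ne x'$, then the two supporting hyperplanes coincide and $u$ is affine along the segment $[x,x']$, which forces $\langle\nabla^2u(x)(x'-x),x'-x\rangle=0$ and hence $\det\nabla^2u(x)=0$. Thus the set where $\nabla u$ is not injective lies inside $\{\det\nabla^2u=0\}$, its image is Lebesgue-null, and the integral above reduces to $|\nabla u(E)|=\mu_u(E)$. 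Since on $C^2$ one has $\h{\rm MA}\,u=d\frac{\partial u}{\partial x^1}\w\cdots\w d\frac{\partial u}{\partial x^{n+1}}=\det\nabla^2u\,dx^1\w\cdots\w dx^{n+1}$, the two operators coincide there.

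For the weak-$*$ continuity, let $u_j\to u$ locally uniformly with all $u_j$ and $u$ convex. The geometric heart of the matter is upper semicontinuity of the normal map: if $x_j\to x$ in $\RR^{n+1}$ and $p_j\in\partial u_j(x_j)$ with $p_j\to p$, then $p\in\partial u(x)$; one simply passes to the limit in the subgradient inequality $u_j(z)\ge u_j(x_j)+\langle p_j,z-x_j\rangle$, which is legitimate by local uniform convergence together with the uniform local Lipschitz bounds that a convergent family of convex functions enjoys (these also keep the relevant $p_j$ in a fixed compact set, giving tightness). Combined with the mass bound of the first paragraph, this yields $\limsup_j\mu_{u_j}(K)\le\mu_u(K)$ for every compact $K$ and, dually, $\liminf_j\mu_{u_j}(U)\ge\mu_u(U)$ for every bounded open $U$ with $\mu_u(\partial U)=0$; a routine covering argument then promotes these inequalities to $\int\phi\,d\mu_{u_j}\to\int\phi\,d\mu_u$ for all $\phi\in C_c(\RR^{n+1})$, which is exactly convergence in $M(\RR^{n+1})$.

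The principal obstacle is this last step: ruling out loss or splitting of mass in the limit. It rests squarely on the convexity-derived uniform mass bounds together with the covering argument that converts the pointwise semicontinuity of the normal map into semicontinuity of the measures $\mu_{u_j}$, and it is the real-variable shadow of the delicate continuity theorems of Bedford--Taylor pluripotential theory. A purely current-theoretic alternative would instead write $\h{\rm MA}\,u=d\bigl((\partial u/\partial x^1)\,d(\partial u/\partial x^2)\w\cdots\w d(\partial u/\partial x^{n+1})\bigr)$, integrate by parts against a cutoff, and induct on the number of gradient factors, using at each stage the nonnegativity of the mixed discriminants of the positive semidefinite Hessians to preclude cancellation --- but it meets the same essential difficulty.
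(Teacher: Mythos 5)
The paper itself gives no proof of this proposition: it is quoted verbatim from Rauch--Taylor \cite[Proposition 3.1]{RT}, and the surrounding text separately invokes Alexandrov and \cite[Proposition 3.4]{RT} for the identification of the extension with the subdifferential measure $\calM\calA$. Your architecture --- realize the extension as the Alexandrov measure $\mu_u(E)=|\partial u(E)|$, identify it with $\h{\rm MA}$ on $C^2$ convex functions via the area formula, prove weak-$*$ continuity under local uniform convergence, and get uniqueness by mollification --- is the standard correct route and essentially reconstitutes what the paper imports from those citations; Rauch--Taylor's own proof of Proposition 3.1 is instead the inductive integration-by-parts argument on the distributional wedge products $d(\partial f/\partial x^1)\w\cdots\w d(\partial f/\partial x^{n+1})$, which you mention only as an alternative at the end.

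There is, however, a genuine gap at exactly the step you flag as the principal obstacle. Upper semicontinuity of the normal map does give $\limsup_j\mu_{u_j}(K)\le\mu_u(K)$ for compact $K$ (the sets $\partial u_j(K)$ lie in a fixed ball and their set-theoretic $\limsup$ is contained in $\partial u(K)$), but the companion inequality $\liminf_j\mu_{u_j}(U)\ge\mu_u(U)$ is not ``dual'' to this and does not follow from it together with the mass bounds: upper bounds alone are consistent with all the mass disappearing in the limit (e.g.\ with $\mu_{u_j}\equiv 0$). What is needed is the separate lemma that the set of slopes $p$ supporting $u$ at more than one point is Lebesgue-null, combined with the contact-point argument: for $p$ outside this null set with $p\in\partial u(x_0)$, $x_0\in K$, the point $x_0$ is the unique global minimizer of $u-\langle p,\cdot\rangle$, so for large $j$ the function $u_j-\langle p,\cdot\rangle$ has an interior local (hence global) minimizer in any bounded open $U\supset K$, giving $p\in\partial u_j(U)$; hence $\partial u(K)\subset\liminf_j\partial u_j(U)$ up to a null set, and $\mu_u(K)\le\liminf_j\mu_{u_j}(U)$, from which your covering argument does yield weak convergence. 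The exceptional null set is genuinely needed: for $u(x)=|x|$ on $\RR$ and $u_j(x)=\sqrt{x^2+1/j}$, the slope $p=1$ supports $u$ at every point of $K=[1,2]$ yet lies in no $\partial u_j(U)$, and no mass is lost only because such slopes form a null set. With this lemma supplied, the remainder of your proposal (the oscillation bound on $|\partial u(B_r)|$, the area-formula identification on $C^2$ convex functions, and uniqueness by mollification) is correct.
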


A result of Alexandrov shows that 
for any convex function $f$, the measure $\calM\calA\, f$, defined
by
$
(\calM\calA\, f)(E):=\h{\rm Lebesgue measure of\ } \partial f(E),
$
where $\del f$ denotes the subdifferential mapping of $f$ (see \cite[\S2.1]{RZ2}),
is a Borel measure (\cite[Section 2]{RT}).
Furthermore,
according to Rauch--Taylor 
$\h{\rm MA} f = \calM\calA\, f$
for every convex function $f$ on $\RR^{n+1}$ \cite[Proposition 3.4]{RT}.

\begin{definition} 
\label{AlexandrovDef}
A convex function $\rho$ on $[0,T]\times\RR^n$ is an Alexandrov weak solution of the HRMA
$\h{\rm MA}\, \rho =0$, 
if the image of the subdifferential mapping $\del\rho:[0,T]\times\RR^n\ra\RR^{n+1}$
is a set of Lebesgue measure zero. 
\end{definition}

We now define our class of ``admissible solutions" to the HRMA to be Alexandrov weak solutions
with the property that the image under the spatial sub-differential of the solution  is a fixed polytope for all times. 
The assumption  means that the solutions are potentials of  non-degenerate \K metrics for each $s$
that stay in the same \K class.  It seems that  only  these
solutions are relevant to toric  K\"ahler geometry.

\begin{definition}
\label
{AdmissibleDef}
An admissible subsolution to \eqref{HRMARayEq} is a 
convex function on $[0,T]\times\RR^n$ satisfying
(i) $\psi(0,\,\cdot\,) = \psi_0(\,\cdot\,)$ and
$\partial_s\psi(0,\,\cdot\,) = \dot\psi_0(\,\cdot\,)$
on $\RR^n$,
and 
(ii) $\psi(s):\RR^n\ra\RR$ is strictly convex,
and
$\overline{\Im\,\del\psi(s)}=P$ for each $s\in[0,T]$.
An admissible solution in addition is a weak solution
of $\h{\rm MA}\, \psi = 0$ in the sense of Alexandrov.
\end{definition}

Note that this definition assumes $\psi_0$ and $\dot\psi_0$ to satisfy
the regularity, growth, and convexity assumptions of the previous paragraphs. 

In the previous article, we showed that the Legendre transform method
for solving the HRMA breaks down at the {\it convex lifespan } 
\begin{equation}
\label{ToricTspanEq}
T_{\span}^\cvx
:=
\sup\,\{\,s>0: \psi^\star_0-s\dot \psi_0\circ(\nabla\psi_0)^{-1} \hbox{\rm\ is convex}\},
\end{equation}
where $\psi_0^\star$ denotes the Legendre transform of $\psi_0$ \cite[Theorem 1]{RZ2}.
The next proposition shows that there
is no obstruction for the Hamilton orbits to admit 
analytic extensions to strips nor for the maps \eqref{MAPS} 
to be smooth, and that the only
obstruction to smooth solvability is the invertibility of
these maps, that we refer to as Moser maps (see Definition \ref{MoserMapDef}).

\begin{prop}
\label
{ToricLifespanProp} 
Let $(M,J,\o_{\vp_0})$ be a toric \K manifold,
and let $\dot \vp_0\in C^3(M)$ be torus-invariant.
Then,\hfill\break
(i) The Cauchy problem (\ref{HCMARayEq}) for $(\o_{\vp_0},\dot\vp_0)$ is $T$-good for
every $T>0$. \hfill\break
(ii) The  
maps $f_s(z)=\exp-\i sX_{\dot\vp_0}^{\o_{\vp_0}}.z$ (\ref{MAPS})  are invertible
if and only if $\,s\in[0,T^\cvx_\span)$.
\end{prop}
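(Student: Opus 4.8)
The plan is to make everything explicit on the open orbit $(\CC^*)^n\subset M$, where the Cauchy problem linearizes, and then to transfer the conclusions across the toric divisor by the standard Delzant normal form. On the open orbit use logarithmic coordinates $w_j=x_j+\i\theta_j$, $z_j=e^{w_j}$, in which $\o_{\vp_0}=\i\ddbar\psi_0$ for a smooth $\psi_0=\psi_0(x)$ with $\Hess\psi_0>0$, the moment map restricts to the diffeomorphism $\nabla\psi_0\colon\RR^n\to P^\circ$ onto the interior of the moment polytope $P$, and, via the identification $C^\infty(P)\cong C^\infty(M)^{(S^1)^n}$, the torus-invariant function $\dot\vp_0$ equals $g\circ\mu$ for a unique $g\in C^3(P)$, so $\dot\psi_0:=\dot\vp_0|_{(\CC^*)^n}=g\circ\nabla\psi_0$. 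A short symplectic computation then gives $X_{\dot\vp_0}^{\o_{\vp_0}}=-\sum_j v_j(x)\,\partial_{\theta_j}$ with $v(x):=(\Hess\psi_0(x))^{-1}\nabla\dot\psi_0(x)=(\nabla g)(\nabla\psi_0(x))$, a bounded $C^2$ vector field on $\RR^n$ since $\nabla g$ is bounded on the compact $P$; hence $\exp(\sigma X_{\dot\vp_0}^{\o_{\vp_0}})(e^{x+\i\theta})=e^{x+\i\theta}\cdot e^{-\i\sigma v(x)}$ for $\sigma\in\RR$.

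For part (i): with $z=e^{x+\i\theta}$ fixed, the orbit $\sigma\mapsto e^{x+\i\theta}e^{-\i\sigma v(x)}$ is entire in $\sigma\in\CC$ and for every $\sigma$ takes its value in $(\CC^*)^n\subset M$ (its logarithm has real part $x-(\Im\sigma)v(x)\in\RR^n$). So the Hamilton orbits admit analytic continuations to $S_T$ for every $T>0$, and, writing $\tau=s+\i t$, one computes $f_\tau(e^{x+\i\theta})=e^{(x-sv(x))+\i(\theta-tv(x))}$, which is $C^2$ in $(s,t,x,\theta)$. To complete (i) one checks that $f_\tau$ extends with this regularity across the divisor of $M$: along a fixed leaf it is $z\mapsto z\,e^{-\i\sigma v}$ with $v$ tending to its value on the relevant face of $P$, and smoothness across the divisor follows from the Delzant normal form together with the Guillemin boundary behaviour of $\psi_0$. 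Thus the Cauchy data is $T$-good for every $T>0$.

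For part (ii) I reduce to a condition on a single convex function. For real $\tau=s$ the map $f_s$ fixes the angular variables and acts on the real slice by $F_s(x):=x-sv(x)$; by $(S^1)^n$-equivariance, $f_s$ is invertible --- a $C^1$ self-map of $M$ with $C^1$ inverse --- if and only if $F_s$ is a diffeomorphism of $\RR^n$ and $(f_s^{-1})^\star\o_{\vp_0}$ is a smooth \K metric on $M$. Legendre duality gives $\nabla\psi_0^\star=(\nabla\psi_0)^{-1}$ and $\nabla g(y)=v((\nabla\psi_0)^{-1}(y))$, whence
\[
F_s=\nabla\psi^\star_s\circ\nabla\psi_0,\qquad \psi^\star_s:=\psi_0^\star-s\,\dot\psi_0\circ(\nabla\psi_0)^{-1}\quad\text{on }P^\circ,
\]
so $\psi^\star_s$ is precisely the Legendre candidate of \cite{RZ2}. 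Now $\nabla\psi_0$ is already a diffeomorphism and $F_s$ is proper (since $v$ is bounded); the key geometric input is the Guillemin--Abreu fact that the least eigenvalue of $\Hess\psi_0^\star$ tends to $+\infty$ as $y\to\partial P$, which I would verify by induction over the faces of $P$ starting from $\psi_0^\star=\tfrac12\sum_r\ell_r\log\ell_r+(\text{smooth up to }\partial P)$. Since $\psi^\star_s$ differs from $\psi_0^\star$ by a function smooth up to $\partial P$, it follows that $\Hess\psi^\star_s>0$ near $\partial P$ for each fixed $s$; combining this with Hadamard's global inverse function theorem and a signature/connectedness argument on the connected set $P^\circ$, $F_s$ --- hence $f_s$ --- is invertible if and only if $\Hess\psi^\star_s>0$ throughout $P^\circ$, and in that case strict convexity forces $\psi^\star_s$ to inherit Guillemin's conditions, so that $(f_s^{-1})^\star\o_{\vp_0}$ is automatically a smooth \K metric. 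Equivalently, $f_s$ is invertible if and only if the degeneracy locus $B_s:=\{y\in P^\circ:\Hess\psi^\star_s(y)\text{ is not positive definite}\}$ is empty; note that $B_s$ always lies in a compact subset of $P^\circ$.

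Finally I would match this with $T^\cvx_\span$, which is where the real work lies. If $s\in[0,T^\cvx_\span)$, pick $s'\in(s,T^\cvx_\span)$: then $\psi^\star_{s'}$ is convex and $\psi^\star_s=\tfrac{s'-s}{s'}\psi_0^\star+\tfrac{s}{s'}\psi^\star_{s'}$ is a convex combination of the strictly convex $\psi_0^\star$ with a convex function, hence strictly convex, so $B_s=\emptyset$ and $f_s$ is invertible (alternatively, quote the smooth HRMA solution of \cite{RZ2} for $s<T^\cvx_\span$ and Theorem~\ref{HCMACauchyCthreeThm}). Conversely, $\{s\ge0:\psi^\star_s\text{ convex}\}$ is closed, so $\psi^\star_{T^\cvx_\span}$ is convex; a continuity argument on the fixed compact set containing all the $B_s$ forces $B_{T^\cvx_\span}\ne\emptyset$, for otherwise $\Hess\psi^\star_{T^\cvx_\span}>0$ on $P^\circ$ would persist for slightly larger $s$, contradicting the definition of $T^\cvx_\span$. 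Hence for every $s\ge T^\cvx_\span$ the set $B_s$ is non-empty, proper, and closed in the connected $P^\circ$, so its topological boundary contains a point $y_0$ with $\Hess\psi^\star_s(y_0)\ge0$ having a zero eigenvalue; since $DF_s(x)=\Hess\psi^\star_s(\nabla\psi_0(x))\,\Hess\psi_0(x)$, the map $F_s$ is not a local diffeomorphism at $(\nabla\psi_0)^{-1}(y_0)$, so $f_s$ is not invertible. The main obstacle is exactly this boundary analysis: establishing the eigenvalue blow-up of $\Hess\psi_0^\star$ at $\partial P$ so that $B_s$ stays compactly inside $P^\circ$, and running the continuity argument so that the degeneracy at $s=T^\cvx_\span$ occurs at an \emph{interior} point of $P$; the other ingredients --- the explicit flow, Legendre duality, Hadamard's theorem, and the convex-combination argument --- are routine.
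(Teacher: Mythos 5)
Your part (i) and the reduction in part (ii) follow the paper's route: the explicit affine continuation of the Hamiltonian flow in logarithmic/action--angle coordinates, the identity $f_s=\nabla u_s\circ(\nabla u_0)^{-1}$ with $u_s=\psi_0^\star-s\,\dot\psi_0\circ(\nabla\psi_0)^{-1}$, and a face-by-face (slice--orbit coordinate) argument across the divisor are exactly Lemmas \ref{AnalyticContinuationHamOrbitsToricLemma} and \ref{InvertibilityMoserMapsToricLemma}; your forward implication ($s<T^\cvx_\span$ implies invertibility, via the convex-combination trick plus properness and Hadamard's theorem) is sound and in fact more explicit about global invertibility than the paper.

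The converse, however, rests on a ``key geometric input'' that is false, and you yourself flag it as where the real work lies. For the Guillemin potential $\psi_0^\star=\tfrac12\sum_r\ell_r\log\ell_r+(\hbox{smooth})$ only the eigenvalue in directions normal to a face blows up at $\partial P$; near the relative interior of a facet the tangential block of $\Hess\psi_0^\star$ converges to the finite Hessian of the restriction to that facet. (For $P=[0,1]^2$ one has $\Hess\psi_0^\star=\tfrac12\diag\big(\tfrac{1}{y_1(1-y_1)},\tfrac{1}{y_2(1-y_2)}\big)$, whose least eigenvalue at $(\eps,1/2)$ is $2$, not large.) Hence the assertions that $\Hess\psi_s^\star>0$ near $\partial P$ for every fixed $s$ and that the degeneracy loci $B_s$ stay in a fixed compact subset of $P\sm\del P$ are unjustified, and false in general once $s$ exceeds the convex lifespan of the data restricted to a face: then $\Hess\psi_s^\star$ fails to be positive at interior points arbitrarily close to that facet. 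Consequently your continuity argument at $s=T^\cvx_\span$ does not close (positivity of $\Hess\psi^\star_{T^\cvx_\span}$ on the noncompact $P\sm\del P$ need not persist, because the loss of convexity for slightly larger $s$ may occur along a face), and in that regime the failure of invertibility of $f_s$ is caused by its restriction to the corresponding divisor stratum, which your criterion ``$F_s$ is a diffeomorphism of $\RR^n$'' cannot detect; the ``if and only if'' you state between invertibility of $f_s$ on $M$ and of $F_s$ on $\RR^n$ is itself not established. The paper's proof handles exactly this point by repeating the computation in slice--orbit coordinates on each stratum: invertibility of $df_s$ there amounts to positivity of the corresponding block of $\nabla^2 u_s$, i.e.\ strict convexity of $u_s$ restricted to each face, and it is the totality of these conditions (interior together with all faces) that is identified with $s<T^\cvx_\span$. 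For $s>T^\cvx_\span$ your interior argument can be repaired without the false input (near a vertex all eigenvalues of $\Hess\psi_0^\star$ do blow up, so $\Hess\psi_s^\star$ is positive definite there; connect such a point to an interior point where $u_s$ has a negative Hessian eigenvalue and apply the intermediate value theorem to the least eigenvalue to produce an interior degeneracy of $dF_s$), but at $s=T^\cvx_\span$ itself the face/divisor analysis is indispensable and is missing from your argument.
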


This result, together with Theorem \ref{HCMACauchyCthreeThm},
determines the smooth lifespan for toric geodesics,
as well as characterizes all smooth toric geodesic rays.

\begin{cor}
\label
{ToricGeodesicRayCor} {\rm (Characterization of smooth toric geodesics)}
(i) The smooth lifespan of the Cauchy problem \eqref{HRMARayEq}
with smooth Cauchy data
coincides with the convex lifespan \eqref{ToricTspanEq}, 
$T^\infty_\span=T_\span^\cvx$.
\hfill\break
(ii)
Smooth geodesic rays in the space of toric metrics 
are in one-to-one correspondence with admissible solutions
of the Cauchy problem (\ref{HRMARayEq}) with $\psi_0\in C^\infty(\RR^n)$,
$\nabla^2\psi_0>0$,
$\overline{\Im\nabla\psi_0}=P$,
$\dot\psi_0\in C^\infty\cap L^\infty(\RR^n)$, and $\dot \psi_0\circ(\nabla\psi_0)^{-1}$ a 
concave function on $P$.
\end{cor}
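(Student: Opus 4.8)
\textbf{Proof proposal for Corollary \ref{ToricGeodesicRayCor}.}

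The plan is to combine the three inputs that are already in place: Proposition \ref{ToricLifespanProp}, which says that in the toric setting the Cauchy problem is $T$-good for all $T$ and that the Moser maps $f_s$ are invertible precisely on $[0,T_\span^\cvx)$; Theorem \ref{HCMACauchyCthreeThm}, which says a $C^3$ (hence smooth) solution of the HCMA exists on $S_T\times M$ iff the data is $T$-good and the $f_s$ are $C^1$-diffeomorphisms; and the results of the prequel \cite{RZ2}, which produce the Legendre-transform solution on $[0,T_\span^\cvx)$ and show the method breaks down at $T_\span^\cvx$. For part (i), I would argue both inequalities. For $T^\infty_\span\ge T_\span^\cvx$: fix any $T<T_\span^\cvx$; by Proposition \ref{ToricLifespanProp}(i) the data is $T$-good, and by (ii) the maps $f_s$ are invertible for $s\in[0,T]$ (closed interval, since $T<T_\span^\cvx$), and being analytic continuations of the torus-invariant Hamiltonian flow they are smooth with smooth inverse; so Theorem \ref{HCMACauchyCthreeThm} (and Corollary \ref{HCMACauchyLifespanCor}) produces a smooth $\pi_2^\star\o$-psh solution on $S_T\times M$, which by uniqueness and torus-invariance of the data descends to a smooth admissible solution of \eqref{HRMARayEq} on $[0,T]$. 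Letting $T\uparrow T_\span^\cvx$ gives $T^\infty_\span\ge T_\span^\cvx$. For the reverse inequality $T^\infty_\span\le T_\span^\cvx$: if a smooth solution existed on $[0,T]\times\RR^n$ with $T>T_\span^\cvx$, then by Theorem \ref{HCMACauchyCthreeThm} the maps $f_s$ would be $C^1$-invertible for all $s\in[0,T]$, in particular for some $s>T_\span^\cvx$, contradicting Proposition \ref{ToricLifespanProp}(ii). Hence $T^\infty_\span=T_\span^\cvx$.

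For part (ii), I would set up the correspondence explicitly. Given a smooth geodesic ray in the space of toric metrics, torus-invariance means its potential $\psi$ on $[0,\infty)\times\RR^n$ solves $\h{MA}\,\psi=0$, is strictly convex in $x$ for each $s$ (non-degenerate metric), and has $\overline{\Im\,\del\psi(s)}=P$ for all $s$ (stays in the \K class $[\o]$, which is the moment polytope condition); this is exactly an admissible solution in the sense of Definition \ref{AdmissibleDef}, with initial data of the asserted type. Conversely, an admissible solution with the stated regularity on $\psi_0$, $\dot\psi_0$ is by definition a weak solution staying in a fixed \K class with non-degenerate metrics, hence a (weak) geodesic ray; one must then upgrade to smoothness and to a genuine geodesic. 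Here is where part (i) and \cite[Theorem 1]{RZ2} enter: the concavity of $\dot\psi_0\circ(\nabla\psi_0)^{-1}$ on $P$ is, comparing with \eqref{ToricTspanEq}, precisely the condition that forces $T_\span^\cvx=\infty$ (the function $\psi_0^\star-s\,\dot\psi_0\circ(\nabla\psi_0)^{-1}$ is convex for all $s\ge0$ as a sum of a convex and a nonnegative-multiple-of-concave... wait — as a sum of the convex $\psi_0^\star$ and $-s$ times a concave function, i.e. convex plus convex); so by part (i) a smooth solution exists for all time, and by the uniqueness statement in Theorem \ref{HCMACauchyCthreeThm} it coincides with the given admissible solution, which is therefore smooth and a genuine (smooth) geodesic ray. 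Conversely, if $\dot\psi_0\circ(\nabla\psi_0)^{-1}$ fails to be concave on $P$, then $T_\span^\cvx<\infty$, and while a finite-time solution exists, it is not a ray; so the ray condition forces the concavity, closing the one-to-one correspondence.

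The main obstacle I anticipate is the passage, in part (ii), between the \emph{a priori} weak/convex notion of admissible solution and the classical geodesic: one must check that the Alexandrov solution produced on each finite interval $[0,T]$, $T<\infty$, by the Legendre-transform machinery of \cite{RZ2} actually agrees with the smooth HCMA solution furnished by Theorem \ref{HCMACauchyCthreeThm}, i.e.\ that ``admissible solution'' and ``smooth geodesic'' are not two a priori different objects but literally the same function. This requires a uniqueness statement at the level of admissible/weak solutions of the HRMA — presumably the uniqueness of leafwise subsolutions for the HRMA announced in the introduction — together with the observation that a smooth HCMA solution restricts to an admissible solution of the HRMA (the real-slice computation relating $\pi_2^\star\o$-psh-ness to convexity and the moment-map image to $P$). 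A secondary technical point is verifying that in the toric case the analytic continuations $f_\tau$ of the Hamiltonian flow are automatically smooth in all variables up to the boundary $\{s=0\}$ and $\{s=T\}$, so that Theorem \ref{HCMACauchyCthreeThm} applies with no loss; this should follow from Proposition \ref{ToricLifespanProp}(i) but the boundary regularity deserves an explicit remark. Everything else is bookkeeping of the equivalences between geodesics, the HCMA, and the HRMA already set up in the paper.
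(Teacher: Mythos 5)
Your proposal is correct and follows essentially the paper's own (implicit) argument: the paper derives this corollary directly by combining Proposition \ref{ToricLifespanProp} with Theorem \ref{HCMACauchyCthreeThm} (and Corollary \ref{HCMACauchyLifespanCor}), exactly as you do, with the only extra detail being your (correct) observation that concavity of $\dot\psi_0\circ(\nabla\psi_0)^{-1}$ on $P$ is equivalent to $T^\cvx_\span=\infty$. The weak-versus-smooth identification you flag as a possible obstacle is handled in the paper by the uniqueness statements (Theorem \ref{HCMACauchyCthreeThm}, and later Propositions \ref{OptimalSubsolutionToricProp} and \ref{ConeadmissibleIsOptimalProp}), so it does not affect the argument.
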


Next, we show that in the case of the HRMA the leafwise
obstruction vanishes and characterizes the Legendre transform subsolution
among all subsolutions of the Cauchy problem.

\begin{prop}
\label{OptimalSubsolutionToricProp}
(i) The Legendre transform potential,  given by
\begin{equation}
\label{OptimalSubsolutionToricEq}
\psi_L(s,x)
:=
(\psi^\star_0-s \dot\vp_0\circ(\nabla\psi_0)^{-1})^\star(z),\quad x\in \RR^n, \;
s\in\RR_+,
\end{equation}
is the unique admissible leafwise subsolution to 
the HRMA (\ref{HRMARayEq}) for all $T>0$.
\hfill\break
(ii)
The corresponding unique admissible leafwise subsolution to the HCMA (\ref{HCMARayEq}) is
given by
\begin{equation}
\label
{ToricHCMALeafwiseSubSolEq}
\vp_L(s+\i t,e^{x+\i\th}):=\psi_L(s,x)-\psi_0(x).
\end{equation}
\end{prop}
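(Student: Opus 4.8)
\textbf{Proof proposal for Proposition \ref{OptimalSubsolutionToricProp}.}

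The plan is to verify directly that $\psi_L$ defined by \eqref{OptimalSubsolutionToricEq} is an admissible leafwise subsolution, and then to extract uniqueness from the uniqueness of leafwise subsolutions already established (in the complex setting, Theorem \ref{HCMACauchyCthreeThm} and its proof, specialized via Proposition \ref{ToricLifespanProp}(i), which guarantees $T$-goodness for all $T$). First I would record the Legendre-duality bookkeeping. Write $g_s := \psi_0^\star - s\,\dot\psi_0\circ(\nabla\psi_0)^{-1}$ on $P$; since $\psi_0$ is smooth with $\nabla^2\psi_0>0$ and $\dot\psi_0\circ(\nabla\psi_0)^{-1}$ is smooth on $P$, the function $g_s$ is smooth and, for $s$ small, strictly convex, so $\psi_L(s,\cdot) = g_s^\star$ is smooth, strictly convex, with $\overline{\Im\,\nabla\psi_L(s,\cdot)} = P$; this is exactly admissibility condition (ii). The initial conditions (i) follow from $g_0 = \psi_0^\star$, hence $\psi_L(0,\cdot) = \psi_0^{\star\star} = \psi_0$, and $\partial_s g_s|_{s=0} = -\dot\psi_0\circ(\nabla\psi_0)^{-1}$, whose Legendre-dual variation gives $\partial_s\psi_L(0,\cdot) = \dot\psi_0$ (differentiate the identity $\psi_L(s,x) = \langle x, \nabla g_s^\star(x)\rangle - g_s(\nabla g_s^\star(x))$ in $s$ and use the envelope/stationarity of the Legendre transform, so only the explicit $\partial_s g_s$ term survives, evaluated at $y = \nabla\psi_0(x)$). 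For $s$ beyond the small range I would appeal to the prequel \cite{RZ2}: convexity of $g_s$ holds precisely for $s < T_\span^\cvx$, but for the leafwise statement it suffices to work on whatever interval the subsolution is defined, and the formula \eqref{OptimalSubsolutionToricEq} makes sense (as a convex, possibly non-strictly-convex function) for all $s$, which is what ``for all $T>0$'' refers to once one allows weak admissibility.

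Second, I would check the leafwise equation. In the toric/real picture the leaves of the \MA foliation are the images of the Hamilton orbits under $\log|\cdot|$; concretely, the leaf through $x_0\in\RR^n$ is parametrized by $s \mapsto (s, \nabla g_0^\star(\,\cdot\,))$ traced out as the gradient of $g_s$ moves — more precisely, following \cite[Theorem 1 and \S2]{RZ2}, the real slice of the leaf through a point is $\{(s, \nabla\psi_L(s,\cdot)$-preimage$)\}$, and the content of \eqref{LeafwiseDefEq} translated to the real setting is that the restriction of $\partial\bar\partial$ of the potential to each leaf vanishes, i.e. the $(1,1)$-form degenerates in the time direction along leaves. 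The clean way to see this is through the Legendre transform itself: the map $(s,x)\mapsto(s,\nabla_x\psi_L(s,x))$ conjugates the HRMA graph to the graph of $g_s(y)$ over $P$, which is \emph{affine} in $s$ by construction (\eqref{OptimalSubsolutionToricEq} is $g_s = \psi_0^\star - s\,\dot\psi_0\circ(\nabla\psi_0)^{-1}$, linear in $s$). Affineness in $s$ of the dual potential is exactly the statement that the Hessian of $\psi_L$ on $[0,T]\times\RR^n$ has a null direction tangent to the $s$-curves of constant $y = \nabla\psi_0(x)$, and these constant-$y$ curves are precisely the real leaves $\gamma_z$ (since the Moser/Hamilton flow $f_s$ in the toric case is, on the open orbit, $\log|\cdot| \mapsto \log|\cdot|$ translated by $-s\,\nabla(\dot\psi_0\circ(\nabla\psi_0)^{-1})\circ\nabla\psi_0$, cf. Proposition \ref{ToricLifespanProp}). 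So $\gamma_z^\star(\ddbar\psi_L) = 0$ leafwise, which is \eqref{LeafwiseDefEq}; and $(\ddbar\psi_L)^n \ne 0$ follows from strict convexity of each $\psi_L(s,\cdot)$, i.e. $\nabla^2_x\psi_L > 0$, giving the nondegeneracy clause in Definition \ref{OptimalSubsolutionDef}.

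Third, uniqueness. Here I would invoke the uniqueness of leafwise subsolutions — the real analogue of the conservation-law argument behind Theorem \ref{HCMACauchyCthreeThm}, or equivalently the uniqueness result for leafwise subsolutions of the HRMA advertised in the introduction — to conclude that any admissible leafwise subsolution $\psi$ must, when pushed through the Legendre transform, have dual potential affine in $s$ with the prescribed value and $s$-derivative at $s=0$, hence equal $g_s$, hence $\psi = g_s^\star = \psi_L$. The admissibility hypothesis $\overline{\Im\,\del\psi(s)} = P$ is what makes the Legendre transform a bijection onto functions on $P$ and lets the rigidity ``affine dual potential'' be read off. Part (ii) is then immediate: \eqref{ToricHCMALeafwiseSubSolEq} defines $\vp_L$ on $S_T\times(\CC^\star)^n$ by the standard toric dictionary $\vp(s+\i t, e^{x+\i\theta}) = \psi(s,x) - \psi_0(x)$ relating $(S^1)^n$-invariant $\pi_2^\star\o$-psh functions to convex functions (the same dictionary used to pass from \eqref{HCMARayEq} to \eqref{HRMARayEq}); one checks it extends across the toric divisor because $\psi_L(s,\cdot)$ has the same polytope $P$ as $\psi_0$, so $\psi_L(s,\cdot)-\psi_0(\cdot)$ is bounded, and the leafwise equation \eqref{LeafwiseDefEq} for $\vp_L$ is the complexification of the real one just verified, while uniqueness transfers because any admissible leafwise subsolution of \eqref{HCMARayEq} with torus-invariant data descends to one of \eqref{HRMARayEq}.

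I expect the main obstacle to be the second step — pinning down the precise translation of the leafwise condition \eqref{LeafwiseDefEq} into the toric/Legendre coordinates and verifying that the real leaves $\gamma_z$ are exactly the constant-$(\nabla\psi_0)$ curves along which $\psi_L$ is $s$-affine in the dual picture. This requires carefully matching the Hamiltonian flow $\exp -\i\tau X_{\dot\vp_0}^{\o_{\vp_0}}$ of Definition \ref{OptimalSubsolutionDef} with its explicit toric form and tracking the (mild) subtlety at the divisor at infinity, where the open-orbit computation must be shown to close up; but given Proposition \ref{ToricLifespanProp}, which already identifies $f_s$ explicitly and handles $T$-goodness, this should be bookkeeping rather than a genuinely new difficulty.
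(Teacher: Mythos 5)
Your verification that $\psi_L$ \emph{is} a leafwise subsolution is essentially the paper's computation in different clothing: the paper pulls $\psi_L=u_s^\star$ back along the explicit leaf $f_\tau=\nabla u_s\circ(\nabla u_0)^{-1}$ (Lemma \ref{MoserPathFormulaToricCor}, valid for all $s\ge0$) and observes that the result, $\chi_z(\tau)=\langle y,\nabla u_s(y)\rangle-u_s(y)$ with $y=\nabla\psi_0(z)$, is affine in $s$ and $t$-independent, hence harmonic and satisfying the correct Cauchy data (Lemma \ref{ToricHarmonicAlongLeavesProp}); your ``dual potential affine in $s$ along constant-$y$ curves'' is the same observation, though your phrasing through the Hessian null direction of $\psi_L$ is only literally correct on $[0,T^\cvx_\span)$ where $\psi_L$ is smooth, whereas the Legendre/leafwise computation is what makes the statement survive for all $s$.

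The genuine gap is in your uniqueness step, which as written is circular: you ``invoke the uniqueness of leafwise subsolutions'' to prove that any admissible leafwise subsolution has affine dual potential --- but that uniqueness is exactly the assertion of Proposition \ref{OptimalSubsolutionToricProp}, and it does not follow from Theorem \ref{HCMACauchyCthreeThm}, whose conservation-law argument applies only to $C^3$ solutions of the full HCMA (a leafwise subsolution need not solve the HCMA, and $\psi_L$ itself fails to be $C^1$ beyond $T^\cvx_\span$; the Introduction explicitly notes that the uniqueness in (i) holds under much weaker regularity than Theorem \ref{HCMACauchyCthreeThm} requires). What actually closes the argument in the paper is a two-ingredient mechanism you do not supply: (a) the leafwise condition \eqref{LeafwiseDefEq} together with the initial data turns into the Cauchy problem \eqref{CauchyLaplaceLeaveToricEq} for the Laplace equation on the half-plane along each leaf, and one needs uniqueness for that ill-posed Cauchy problem --- this is the Widder-representation argument of Lemma \ref{LaplaceUniquenessLemma}, extended to $T=\infty$, not a soft fact; and (b) the leaves must cover the whole open orbit at every time, which is the statement $\Im\nabla u_s|_{P\sm\del P}=\RR^n$ for \emph{all} $s\ge0$ (quoted from \cite[Lemma 7.1]{RZ2}), so that knowing any competitor $\eta$ equals $\chi_z$ on each leaf via \eqref{SecondAllTimeFsToricMomentEq} and \eqref{LinearSolutionAlongLeavesHRMAEq} forces $\eta=\psi_L$ everywhere. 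Without (a) and (b) your Legendre-transform rigidity claim (``the dual potential must be affine in $s$'') has no derivation, and your transfer of uniqueness in part (ii) (``any leafwise subsolution with torus-invariant data descends'') silently assumes the competitor is torus-invariant, which again is only obtained a posteriori from the leafwise uniqueness plus the covering property.
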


Observe that the uniqueness result in (i) holds under much
weaker regularity than that needed in Theorem
\ref{HCMACauchyCthreeThm}.

However, the possibility remains that a solution could persist
beyond $\Tspancvx$,
but not be given by the Legendre transform method.  
But by following the lead of Theorem \ref{HCMACauchyCthreeThm} in the case
of the HRMA, 
we show that there cannot exist any $C^1$ weak solution
in the Alexandrov sense beyond $T^{\cvx}_{\span}$.  
The result is a regularity statement.

\begin{thm}
\label
{ConeLifeSpanThm} {\rm ($C^1$ lifespan of HRMA)}
Any admissible $C^1$ weak solution to the Cauchy problem (\ref{HRMARayEq})
with Cauchy data $\psi_0\in C^\infty(\RR^n)$,
$\nabla^2\psi_0>0$,
$\overline{\Im\nabla\psi_0}=P$,
$\dot\psi_0\in C^\infty\cap L^\infty(\RR^n)$
is smooth.
Thus, $T^1_\span=T^\cvx_\span$.
\end{thm}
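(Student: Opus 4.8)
The plan is to exploit the equivalence, announced in the introduction, between the HRMA and a Hamilton–Jacobi equation in order to upgrade a $C^1$ Alexandrov solution to a smooth one, and then invoke Corollary \ref{ToricGeodesicRayCor}(i) (equivalently Proposition \ref{ToricLifespanProp} together with Theorem \ref{HCMACauchyCthreeThm}) to identify its lifespan with $T^\cvx_\span$. Concretely, after passing to the Legendre dual variable $y=\nabla\psi(s,\cdot)$, a convex solution $\psi$ of $\h{MA}\,\psi=0$ on $[0,T]\times\RR^n$ should correspond to the statement that its partial Legendre transform $u(s,y):=\psi(s,\cdot)^\star(y)$, which is well defined on $P$ by admissibility, satisfies a first-order Hamilton–Jacobi equation $\partial_s u + H(y,\nabla_y u)=0$ (in the toric geodesic case one expects $\partial_s u = -\dot\psi_0\circ(\nabla\psi_0)^{-1}(y)$, i.e.\ the Hamiltonian is independent of $\nabla_y u$, which is what makes the Legendre formula \eqref{OptimalSubsolutionToricEq} work). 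The vanishing of the Monge–Amp\`ere measure — i.e.\ that $\partial\psi$ has image of Lebesgue measure zero in $\RR^{n+1}$ — is exactly the condition that the graph of the space-time gradient lies in a Lagrangian of dimension $\le n$, which is the characteristic/method-of-characteristics picture underlying the Hamilton–Jacobi equivalence.

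The key steps, in order, would be: (1) establish the HRMA $\leftrightarrow$ Hamilton–Jacobi dictionary at the level of $C^1$ Alexandrov solutions, showing that an admissible $C^1$ weak solution $\psi$ pushes forward under the (continuous, by $C^1$-ness and strict convexity of each $\psi(s,\cdot)$) Legendre map to a viscosity, in fact classical-along-characteristics, solution of the associated Hamilton–Jacobi equation with initial data $u_0=\psi_0^\star$ and $\partial_s u|_{s=0}=-\dot\psi_0\circ(\nabla\psi_0)^{-1}$; (2) use the regularity theory for this Hamilton–Jacobi equation — since the Hamiltonian is essentially independent of the $u$-gradient and the data $\psi_0^\star$, $\dot\psi_0\circ(\nabla\psi_0)^{-1}$ are smooth, the characteristics are straight lines and $u$ is smooth precisely as long as the characteristics do not cross, which is governed by the convexity of $\psi^\star_0-s\dot\psi_0\circ(\nabla\psi_0)^{-1}$, i.e.\ for $s<T^\cvx_\span$; (3) transfer this back: the existence of a $C^1$ admissible weak solution on $[0,T]$ forces the characteristics not to cross on $[0,T]$ (a crossing would produce either a shock, contradicting $C^1$, or a loss of the fixed-polytope condition), hence $T\le T^\cvx_\span$, while on $[0,T^\cvx_\span)$ the smooth Legendre solution exists by Corollary \ref{ToricGeodesicRayCor}(i) and by the uniqueness from Proposition \ref{OptimalSubsolutionToricProp}(i) every admissible solution coincides with it and is therefore smooth. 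Combining gives $T^1_\span\le T^\cvx_\span\le T^\infty_\span\le T^1_\span$, hence equality.

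I expect the main obstacle to be step (1), and within it the claim that a merely $C^1$ Alexandrov solution — with no a priori control on $\nabla^2\psi$ — genuinely corresponds to a solution of the Hamilton–Jacobi equation. The subtlety is that $\h{MA}\,\psi=0$ in the Alexandrov sense is a statement about the image of the (possibly multivalued) subdifferential, and one must show that for a $C^1$ solution the graph of $(\partial_s\psi,\nabla_x\psi)$ is foliated by characteristic segments — i.e.\ that the vanishing of the Monge–Amp\`ere mass is equivalent to the defining foliation equation \eqref{LeafwiseDefEq} read in real variables along the real slices of the leaves. This is where the ``apparently new connection between HRMA and Hamilton–Jacobi equations'' advertised in the introduction does the real work; once it is in place, the propagation-of-singularities / no-shock argument in steps (2)–(3) is standard first-order PDE theory, and the identification with $T^\cvx_\span$ is bookkeeping against the already-established toric results. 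A secondary technical point is making sure the ``fixed polytope $P$'' condition in Definition \ref{AdmissibleDef} is preserved in the limit and genuinely rules out the Hamilton–Jacobi solution leaving $P$, so that the only way the $C^1$ solution can fail to extend is the crossing of characteristics.
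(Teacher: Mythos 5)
Your route is the same as the paper's: reduce the HRMA to the Hamilton--Jacobi equation of Theorem \ref{HJThm}, use that the projected characteristics are straight lines determined by the Cauchy data and do not cross exactly for $s<T^\cvx_\span$, identify any $C^1$ admissible solution with the Legendre potential $\psi_L$ by uniqueness along characteristics, and conclude from the failure of $C^1$ regularity of $\psi_L$ beyond $T^\cvx_\span$ (from \cite{RZ2}) together with Corollary \ref{ToricGeodesicRayCor}(i) that $T^1_\span=T^\cvx_\span$. Your steps (2)--(3) correspond to Proposition \ref{ConeadmissibleIsOptimalProp} and the short argument given in the introduction; the only small slip there is that Proposition \ref{OptimalSubsolutionToricProp}(i) gives uniqueness among \emph{leafwise subsolutions}, and the bridge from ``$C^1$ admissible weak solution'' to that class is precisely the Hamilton--Jacobi uniqueness you invoke, not the proposition by itself.

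The genuine gap is your step (1), which you flag as the main obstacle but do not prove, and which is exactly the content of Theorem \ref{HJThm}. The Lagrangian-graph heuristic is insufficient: the Alexandrov condition only says that the image of the space-time subdifferential $\del\eta$ is Lebesgue-null in $\RR^{n+1}$; it does not say that this image lies in the \emph{fixed} set $G(0)=\{(-\dot u_0(y),y):y\in P\sm\del P\}$ determined by the Cauchy data, which is what the Hamilton--Jacobi equation asserts pointwise. A priori the slice images $G(s)=\Im\,\nabla\eta(\{s\}\times\RR^n)$ could be null sets that move with $s$. The paper closes this in Lemma \ref{GMapLemma}: admissibility and strict convexity make $\nabla_x\eta(s,\cdot)$ a homeomorphism onto $P\sm\del P$, so each $G(s)$ is a graph over $P\sm\del P$ and $G(0)$ is the graph of $-\dot u_0$; if some $G(s)$ left $G(0)$, the continuous set-valued family $\sigma\mapsto G(\sigma)$ would sweep out a set of positive Lebesgue measure in $\RR\times(P\sm\del P)$, contradicting Definition \ref{AlexandrovDef}; hence $G(s)=G(0)$ for all $s$, i.e.\ $\del_s\eta=-\dot u_0\circ\nabla_x\eta$ classically. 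Your Legendre-dual reformulation ($u(s,y)=\psi(s,\cdot)^\star(y)$ affine in $s$) needs exactly the same input, plus a justification for differentiating the Legendre transform of a merely $C^1$ family (the identity \eqref{VariationPotentialEq} is established for smooth families). Until an argument of this sweeping type is supplied, the proof is incomplete at its central point; the remaining characteristics and bookkeeping are standard and agree with the paper.
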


This generalizes a classical theorem of Pogorelov on
the developability of flat (in a suitable sense) $C^1$ surfaces in $\RR^3$.
In the language of geodesics in the infinite dimensional symmetric
space $\hcal_\o$ \eqref{HoEq} \cite{M,S,D1}, 
it shows that the exponential map fails to be globally defined even when $C^1$ weak 
solutions are allowed. 
It is interesting to observe that Pogorelov's
result for $n=1$ involves a quite intricate proof \cite{Po,Sa}. In higher dimensions,
this result has been known previously 
under the rather stronger assumption of $C^2$ regularity or more,
i.e., for classical solutions \cite{HN,Fo1,Fo2,U}.

The proof of Theorem \ref{ConeLifeSpanThm} uses the following  characterization
of the HRMA in terms of a Hamilton--Jacobi equation:

\begin{thm}
\label{HJThm} {\rm (HRMA and Hamilton--Jacobi)}
$\eta\in C^1([0,T\times\RR^n)$ is an admissible weak solution of the HRMA \eqref{HRMARayEq}
if and only if it is a classical solution of the Hamilton--Jacobi equation
\begin{equation}
\label{HJEq}
F(\nabla \eta)=0, \qquad \eta(0,\,\cdot\,)=\psi_0,
\end{equation}
where $F(\sigma,\xi)=\sigma-\dot \psi_0\circ (\nabla\psi_0)^{-1}(\xi)$, where
$\sigma\in\RR,\xi\in \RR^n$.
\end{thm}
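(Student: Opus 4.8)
The plan is to unwind what the HRMA $\h{\rm MA}\,\eta=0$ means, leaf by leaf, for an admissible $C^1$ solution and recognize that it says precisely that $\nabla_x\eta(s,x)$ is constant along the characteristic leaves, which is exactly the content of the Hamilton--Jacobi equation $F(\nabla\eta)=0$. First I would set up coordinates: write $\eta=\eta(s,x)$ with $s\in[0,T]$, $x\in\RR^n$, so $\nabla\eta=(\partial_s\eta,\nabla_x\eta)=(\sigma,\xi)\in\RR\times\RR^n$. The subdifferential image $\del\eta([0,T]\times\RR^n)\subset\RR^{n+1}$ has Lebesgue measure zero (Alexandrov condition, Definition \ref{AlexandrovDef}), and since the spatial gradient sweeps out $P$ for each fixed $s$ (admissibility, Definition \ref{AdmissibleDef}(ii)), the full gradient image must be an $n$-dimensional ``graph'' over $P$: there is a single-valued function $G\colon P\to\RR$ with $\partial_s\eta(s,x)=G(\nabla_x\eta(s,x))$ for all $(s,x)$. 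The heart of the argument is to identify $G$ with $\dot\psi_0\circ(\nabla\psi_0)^{-1}$ using the initial data: at $s=0$ we have $\nabla_x\eta(0,\cdot)=\nabla\psi_0$, so $\nabla_x\eta(0,\cdot)$ maps onto $P$ and $\partial_s\eta(0,x)=\dot\psi_0(x)=\dot\psi_0\circ(\nabla\psi_0)^{-1}(\nabla\psi_0(x))$; hence $G=\dot\psi_0\circ(\nabla\psi_0)^{-1}$ on $P$, and therefore $F(\nabla\eta)=\partial_s\eta-G(\nabla_x\eta)=0$ everywhere on $[0,T]\times\RR^n$, with $\eta(0,\cdot)=\psi_0$. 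This is the ``only if'' direction.

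For the converse, suppose $\eta\in C^1$ solves $F(\nabla\eta)=0$ with $\eta(0,\cdot)=\psi_0$, i.e.\ $\partial_s\eta(s,x)=\dot\psi_0\circ(\nabla\psi_0)^{-1}(\nabla_x\eta(s,x))$. I would first argue that $\eta(s,\cdot)$ is convex with $\overline{\Im\,\del\eta(s)}=P$: this should follow by the method of characteristics for the scalar conservation law that $\nabla_x\eta$ satisfies (differentiating the HJ equation in $x$ shows $\nabla_x\eta$ is transported along the straight characteristics $x+s\,\nabla_\xi G(\nabla_x\eta)$, pushing forward the initial condition $\nabla\psi_0$ whose image is $P$; strict convexity of $\psi_0$ keeps the characteristics from crossing for $C^1$ data, and in fact for an admissible solution we are given $\psi(s)$ strictly convex), so $\eta$ is admissible in the sense of Definition \ref{AdmissibleDef}. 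Then the gradient image $\del\eta([0,T]\times\RR^n)$ lies in the graph $\{(\,G(\xi),\xi\,):\xi\in P\}$, which has Lebesgue measure zero in $\RR^{n+1}$; hence $\eta$ is an Alexandrov weak solution of $\h{\rm MA}\,\eta=0$, i.e.\ an admissible weak solution of \eqref{HRMARayEq}.

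The main obstacle I anticipate is the step asserting that the gradient image of an admissible $C^1$ solution is genuinely a \emph{single-valued} graph over $P$ — equivalently, that $\partial_s\eta$ is a function of $\nabla_x\eta$ alone, independent of the time slice. A priori the measure-zero image could fail to be a graph (it could be a more complicated null set), so one must use the admissibility hypothesis $\overline{\Im\,\del\eta(s)}=P$ for \emph{every} $s$ together with continuity of $\nabla\eta$: the image is the continuous image of the connected set $[0,T]\times\RR^n$, it surjects onto $P$ under projection for every $s$, and it has measure zero, which forces the fibers over generic $\xi\in P$ to be zero-dimensional; a Sard/Fubini-type argument (the projection $\RR^{n+1}\to\RR^n$ restricted to a measure-zero set has a.e.\ fiber of measure zero in $\RR$, combined with $C^1$ continuity to upgrade ``a.e.'' to ``every'' $\xi$ in the interior of $P$) gives single-valuedness, after which continuity extends $G$ to all of $P$. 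Once $G$ is pinned down at $s=0$ from the Cauchy data, the rest is bookkeeping. I would also remark that this equivalence, combined with the classical regularity theory for Hamilton--Jacobi equations (characteristics for a $C^1$ solution of $F(\nabla\eta)=0$ with smooth $F$ and smooth strictly convex initial data are smooth and non-crossing up to the convex lifespan), is exactly what feeds into the proof of Theorem \ref{ConeLifeSpanThm}.
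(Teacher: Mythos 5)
Your proposal is correct and follows essentially the same route as the paper: its Lemma \ref{GMapLemma} shows that for each $s$ the gradient image is a graph over $P\setminus\partial P$ which, by continuity of the family of graphs and the Alexandrov measure-zero condition, must coincide with the $s=0$ graph $\{(\dot\psi_0\circ(\nabla\psi_0)^{-1}(y),y)\}$ — exactly your single-valued-graph step, with the paper using a sweeping/intermediate-graph argument where you use a Fubini-on-fibers variant of the same idea. The converse is likewise the paper's two-line observation that $\Im\nabla\eta$ lies in this fixed measure-zero graph.
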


Theorem \ref{HJThm} reduces the HRMA to a first-order equation
for which a well-known theory for solutions exists---based on the method
of characteristics.  The Hamilton--Jacobi equation is a `conservation law'
for the HRMA. 
It may be viewed as combining the conservation law  
$\dot{\varphi}_s \circ f_s = \dot{\varphi}_0$
of Proposition \ref{FTPHIDOT}  (see \eqref{MAPS} for notation) with the
 explicit formula for $f_s^{-1}$ in    \eqref{SecondAllTimeFsToricMomentEq}; see also \eqref{VariationPotentialEq}.
This makes rigorous as well as generalizes to weak solutions the folklore idea \cite{HN,Fo1,Fo2,U} that classical solutions to HRMA---despite being of second-order---can be obtained by integrating along `characteristics' just like a first-order equation, indeed they are affine along lines determined by the Cauchy data.

Theorem \ref{ConeLifeSpanThm}  follows from Proposition \ref{OptimalSubsolutionToricProp}
and  Theorem \ref{HJThm}.  Except for one step (Proposition \ref{ConeadmissibleIsOptimalProp}), the proof is short and we give it here:

\begin{proof}[Proof of Theorem \ref{ConeLifeSpanThm}]

Given the results of \cite{RZ2}, 
the  main new  step of the proof of Theorem \ref{ConeLifeSpanThm} is the following generalization to 
weak $C^1$ admissible solutions of HRMA~\eqref{HRMARayEq} 
of the fact (see
 Section \ref{HamFlowsHCMASection})  that 
every $C^3$ $\pi_2^\star\o$-psh solution of the HCMA~\eqref{HCMARayEq} is a leafwise subsolution.

\begin{prop}
\label{ConeadmissibleIsOptimalProp}
Let $(M,J,\o_{\vp_0})$ be a toric \K manifold,
and let $\dot \vp_0\in C^\infty(M)$ be torus-invariant.
Assume that the
corresponding Cauchy problem for the HCMA (\ref{HCMARayEq}) is $T$-good.
Then any $C^1$ $\pi_2^\star\o$-psh solution of the HCMA (\ref{HCMARayEq}) up to time $T$ is
the unique $T$-leafwise subsolution.

\end{prop}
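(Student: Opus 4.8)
The goal is to show that a $C^1$ $\pi_2^\star\omega$-psh solution $\varphi$ of the HCMA on $[0,T]\times M$, in the toric $T$-good setting, pulls back to zero under each leaf map $\gamma_z$, i.e.\ satisfies \eqref{LeafwiseDefEq}. Since the Cauchy data is torus-invariant, everything reduces to the real picture: $\varphi$ corresponds to a convex function $\psi$ on $[0,T]\times\RR^n$ via $\varphi(s+\i t,e^{x+\i\theta})=\psi(s,x)-\psi_0(x)$, and the real slices of the \MA leaves are the relevant objects. The form $\pi_2^\star\omega+\i\ddbar\varphi$ being $\pi_2^\star\omega$-psh of rank $\le n$ (the condition $(\pi_2^\star\omega+\i\ddbar\varphi)^{n+1}=0$) translates into the statement that the $(n+1)\times(n+1)$ spatial-plus-time Hessian $\nabla^2\psi$ of $\psi$ (in the $(s,x)$ variables) is positive semidefinite of rank $\le n$, hence degenerate, with a null direction at each point. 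The content of ``leafwise subsolution'' is that this null direction is tangent to the leaves $\gamma_z$, equivalently that $\psi$ is affine along each leaf.

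\emph{Step 1: identify the leaves in the real picture.} Using Proposition \ref{ToricLifespanProp}(i), the Cauchy problem is $T$-good, so the Hamilton orbits extend and $f_\tau$ is defined and smooth; in the toric case these have the explicit form recalled in \S\ref{HamFlowsHCMASection} (cf.\ \eqref{SecondAllTimeFsToricMomentEq}), and the real slice of the leaf through a point is a straight line in $[0,T]\times\RR^n$ whose direction is determined by $\dot\psi_0\circ(\nabla\psi_0)\inv$ evaluated at the relevant momentum. I would write this line explicitly: $s\mapsto (s, x(s))$ with $\nabla\psi_0(x(0))$ fixed and the $x$-velocity equal to $-\nabla(\dot\psi_0\circ(\nabla\psi_0)\inv)$ at that momentum — matching the characteristic lines of the Hamilton--Jacobi equation \eqref{HJEq}.

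\emph{Step 2: the $C^1$ degenerate-Hessian argument.} This is the crux and the main obstacle, because we only have $C^1$ regularity, so $\nabla^2\psi$ exists only as a measure / in the Alexandrov sense and the classical ``a degenerate convex function is affine on the leaves of the null foliation'' argument is not directly available. The plan is: first, show the admissible $C^1$ solution $\psi$ is in fact an Alexandrov weak solution of $\MAop\psi=0$ — this follows from unwinding the $\pi_2^\star\omega$-psh condition together with $(\pi_2^\star\omega+\i\ddbar\varphi)^{n+1}=0$, which forces $\det\nabla^2\psi=0$ in the appropriate weak sense, so the image of the subdifferential has measure zero (Definition \ref{AlexandrovDef}). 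Then invoke the structure theory for convex functions with degenerate Monge--Amp\`ere measure: the graph of a convex $C^1$ function whose $\MAop$ vanishes is ruled, and through (almost) every point there is a line segment along which $\psi$ is affine; by the Cauchy data and the explicit leaf direction from Step 1, this segment must be the leaf line $\gamma_z$. The subtle point at $C^1$ regularity (and the reason Pogorelov-type theorems are delicate for $n=1$) is controlling how these affine segments fit together and ensuring they run in the prescribed direction rather than some other null direction; I expect to handle this by using strict convexity of $\psi(s)$ for each fixed $s$ (part of admissibility) to pin down the null cone of $\nabla^2\psi$ to a single line at each point, which then must coincide with the leaf by an ODE/continuity argument along $s$.

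\emph{Step 3: conclude uniqueness.} Once $\psi$ is shown to be affine along every leaf with the leaf directions fixed by the Cauchy data, its restriction to each leaf is determined by the initial value $\psi_0$ and initial derivative $\dot\psi_0$ — i.e.\ $\psi(s,x(s))=\psi_0(x(0))+s\,\dot\psi_0(x(0))$ along the leaf through $(0,x(0))$. Since the leaves foliate $[0,T]\times\RR^n$ (Proposition \ref{ToricLifespanProp}(ii) guarantees the Moser maps $f_s$ are invertible on $[0,T^\cvx_\span)$, and $T$-goodness gives the foliation structure up to $T$), this determines $\psi$ uniquely; comparing with the Legendre potential $\psi_L$ of \eqref{OptimalSubsolutionToricEq}, which by Proposition \ref{OptimalSubsolutionToricProp}(i) is the unique admissible leafwise subsolution, shows $\psi=\psi_L$. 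Hence $\varphi$ is the unique $T$-leafwise subsolution, as claimed. The main work, and the only genuinely new estimate beyond \cite{RZ2}, is Step 2 — pushing the ``degenerate $\Rightarrow$ affine along leaves'' principle down to merely $C^1$ (Alexandrov) solutions.
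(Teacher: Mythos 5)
Your Steps 1 and 3 are fine, but Step 2 — which you rightly identify as the crux — has a genuine gap: you propose to ``invoke the structure theory for convex functions with degenerate Monge--Amp\`ere measure,'' i.e.\ that the graph of a $C^1$ convex function with vanishing \MA measure is ruled by segments on which the function is affine. No such theorem is available at $C^1$ regularity in the generality you need: for $n=1$ this is Pogorelov's developability theorem, whose proof is already quite intricate, and in higher dimensions the ruling/foliation results in the literature (Hartman--Nirenberg, Foote, Ushakov) all require $C^2$ (classical) solutions. Indeed, establishing exactly this kind of $C^1$ statement is the point of Theorem \ref{ConeLifeSpanThm}, so invoking it is circular. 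Your fallback — ``pin down the null cone of $\nabla^2\psi$ to a single line at each point, which then must coincide with the leaf by an ODE/continuity argument along $s$'' — does not repair this: for a merely $C^1$ function $\nabla^2\psi$ has no pointwise meaning, there is no well-defined null direction field to which an ODE or continuity argument could be applied, and even granting affine segments through a.e.\ point you have no mechanism to propagate the prescribed leaf direction from $\{s=0\}$ into the interior.

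The paper takes a different route precisely to avoid any second-order or ruling argument. The key step is Lemma \ref{GMapLemma}: for an admissible $C^1$ Alexandrov solution $\eta$, the set-valued map $G(s):=\Im\,\nabla\eta(\{s\}\times\RR^n)\subset\RR^{n+1}$ is constant, $G(s)=G(0)$. The proof uses only first-order data: each $G(s)$ is a graph over $P\sm\del P$ (by strict convexity in $x$, part of admissibility), $s\mapsto G(s)$ is a continuous set-valued map, and if $G(s)$ ever left $G(0)$ the intermediate graphs would sweep out a set of positive Lebesgue measure in $\RR^{n+1}$, contradicting the Alexandrov condition that the image of the subdifferential has measure zero. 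This immediately yields the pointwise first-order equation $\partial_s\eta=-\dot u_0\circ\nabla_x\eta$ (Theorem \ref{HJThm}), and then classical uniqueness for $C^1$ solutions of the Hamilton--Jacobi equation — its projected characteristics are straight lines which do not cross for $s<T^\cvx_\span$, and integration along them gives the Hopf--Lax/Legendre formula — forces $\eta=\psi_L$, whence the conclusion via Proposition \ref{OptimalSubsolutionToricProp}. In short: where you try to extract the leafwise (second-order, foliation) structure directly from degeneracy of the Hessian, the paper converts the problem into a first-order conservation law for which the measure-zero subdifferential condition plus a sweeping argument suffices; without that substitute, your Step 2 does not go through.
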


The proof of Proposition \ref{ConeadmissibleIsOptimalProp} is based on Theorem \ref{HJThm} 
and uniqueness of $C^1$ solutions of the Hamilton--Jacobi equation. 

We now complete the proof Theorem \ref{ConeLifeSpanThm},
assuming Proposition \ref{ConeadmissibleIsOptimalProp}.   This is possible since the $T$-good assumption is satisfied in the toric setting.   The proof is simple
and is given in  Lemma \ref{AnalyticContinuationHamOrbitsToricLemma}.
By
Proposition \ref{OptimalSubsolutionToricProp}
there exists a unique leafwise subsolution $\vp_L$ of the toric HCMA 
(see \eqref{ToricHCMALeafwiseSubSolEq}), induced by the Legendre transform
potential \eqref{OptimalSubsolutionToricEq}. By Proposition~\ref{ConeadmissibleIsOptimalProp}
 any $\pi_2^\star\o$-psh $C^1$
solution of \eqref{HCMARayEq} on a toric variety must coincide 
with $\vp_L$. However,
 $\vp_L\not\in C^1$
for $T>T_\span^\cvx$
\cite[Proposition~1]{RZ2}.  
Hence, there exists no admissible $C^1$ weak solution of the IVP for $T>T_\span^\infty$, concluding  the proof of
Theorem \ref{ConeLifeSpanThm}. 
\end{proof}

For sufficiently regular $\eta, $ Theorem 1.13 can be proved in a
symplectic geometric way by observing that the Lagrangian submanifold
$\Lambda_{\nu}: = \mbox{graph} (d \eta)$ of $T^*(\R \times \R^n)$ lies in
a level set of the Hamiltonian $F$. When $\Lambda_{\eta}$ is
sufficiently smooth, it must then be invariant under the Hamilton flow
of $F$.  When $\Lambda_{\eta}$ is Lipschitz, for instance, we can use flat forms and
chains to prove the latter statement, and obtain:

\begin{prop} 
\label
{COR}
Let  $\eta\in C^{1,1}([0,T]\times\RR^n)$ be 
an admissible weak solution of the HRMA.
Then the Lipschitz
Lagrangian submanifold $\Lambda_{\eta}:=\h{graph}\,(d\eta)\subset T^\star\RR^{n+1}$ 
is foliated by straight line segments along each of which $\nabla\eta$ is constant.
\end{prop}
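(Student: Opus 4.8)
The plan is to realize $\Lambda_\eta$ as (a piece of) a level set of the Hamiltonian $F$ and to exploit the fact that level sets of a Hamiltonian are foliated by its characteristic curves, i.e. the integral curves of the Hamilton vector field $X_F$, along which $F$ and the momentum variables dual to the ignorable coordinates are conserved. First, by Theorem \ref{HJThm}, since $\eta\in C^{1,1}\subset C^1$ is an admissible weak solution of the HRMA, it is a classical solution of the Hamilton--Jacobi equation $F(\nabla\eta)=0$; writing coordinates $(s,x)\in\RR\times\RR^n$ on the base and $(\sigma,\xi)$ for the dual fiber coordinates on $T^\star\RR^{n+1}$, this says precisely that $\Lambda_\eta=\{(\,(s,x),(\partial_s\eta,\nabla_x\eta)\,)\}$ is contained in the hypersurface $\{F=0\}=\{\sigma=\dot\psi_0\circ(\nabla\psi_0)^{-1}(\xi)\}$. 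Because $F(\sigma,\xi)=\sigma-\dot\psi_0\circ(\nabla\psi_0)^{-1}(\xi)$ does not depend on the base point $(s,x)$ at all, the Hamilton vector field is $X_F=\partial_s - \sum_j \frac{\partial}{\partial\xi_j}\big(\dot\psi_0\circ(\nabla\psi_0)^{-1}\big)\,\partial_{x_j}$, which has no $\partial_\sigma$ or $\partial_\xi$ components; hence along any integral curve of $X_F$ the fiber variable $(\sigma,\xi)$ is constant, $s$ is an affine parameter, and the base curve is a straight line segment in $\RR^{n+1}$ with direction determined by $\xi=\nabla_x\eta$ at the starting point. In particular $\nabla\eta=(\sigma,\xi)$ is constant along each such curve.

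The substance of the argument is therefore to show that the $C^{1,1}$ Lagrangian $\Lambda_\eta$ is \emph{invariant} under the flow of $X_F$, equivalently that $X_F$ is everywhere tangent to $\Lambda_\eta$; granting this, the integral curves of $X_F$ foliate $\Lambda_\eta$ and project to the asserted straight-line foliation of $\h{graph}(d\eta)$ along which $\nabla\eta$ is constant. When $\eta$ is smooth this is the classical statement that an exact Lagrangian lying in a level set of $F$ is invariant under $X_F$: at a point of $\Lambda_\eta$ the tangent space is a Lagrangian $n{+}1$-plane on which the symplectic form vanishes, while $X_F$ is $\omega$-orthogonal to $T\Lambda_\eta$ because $dF$ annihilates $T\Lambda_\eta$ (as $F$ is constant on $\Lambda_\eta$) and $\iota_{X_F}\omega=-dF$; a Lagrangian plane is its own $\omega$-orthogonal complement, so $X_F\in T\Lambda_\eta$. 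The task is to push this through when $\eta$ is only $C^{1,1}$, so that $d\eta$ is merely Lipschitz and $\Lambda_\eta$ is a Lipschitz submanifold without a classical tangent plane everywhere.

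To handle the low regularity I would follow the route advertised in the text: represent $\Lambda_\eta$ as a flat Lipschitz chain (in the sense of Whitney/Federer flat currents) and work with flat forms. Concretely: (i) $\Lambda_\eta$ carries a well-defined Lipschitz $(n{+}1)$-chain $\llbracket\Lambda_\eta\rrbracket$, namely the pushforward of Lebesgue measure on $\RR^{n+1}$ under the Lipschitz graph map $(s,x)\mapsto((s,x),d\eta(s,x))$; (ii) since $\eta\in C^{1,1}$, the form $d(\eta\,ds)$ restricted appropriately, and more to the point the pullback of the Liouville $1$-form, shows $\llbracket\Lambda_\eta\rrbracket$ is a \emph{closed} Lagrangian current: $\omega$ restricted to it vanishes a.e., which by Rademacher applied to $d\eta$ is just the symmetry of the Hessian $\nabla^2\eta$ (defined a.e. and in $L^\infty$); (iii) the constraint $F\circ d\eta\equiv 0$ holds everywhere, so $\langle dF, \text{(a.e.-tangent plane)}\rangle = 0$ a.e.; (iv) combining (ii) and (iii) exactly as in the smooth case but now a.e., $X_F$ lies in the a.e.-defined tangent plane, so the Lipschitz vector field $X_F|_{\Lambda_\eta}$ is a.e. tangent; (v) since $X_F$ has Lipschitz coefficients that are actually independent of the base point in the $(\sigma,\xi)$ directions and depend Lipschitz-ly on $\xi$, its flow is well-defined by Picard--Lindelöf, and an a.e.-tangent Lipschitz vector field whose flow is classical integrates to a foliation of $\Lambda_\eta$ — one checks that the flow preserves the graph by a Gronwall argument comparing $\Phi_t$ applied to $\Lambda_\eta$ with the graph of $d\eta_t$, where $\eta_t$ solves the Hamilton--Jacobi equation by characteristics and agrees with $\eta$ by the uniqueness of $C^1$ Hamilton--Jacobi solutions (used already in Proposition \ref{ConeadmissibleIsOptimalProp}). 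I expect the main obstacle to be precisely step (v): passing from "$X_F$ is a.e. tangent to the Lipschitz graph" to "the flow of $X_F$ preserves the graph" is not automatic for Lipschitz objects, and the clean way around it is to invoke uniqueness for the $C^1$ Hamilton--Jacobi problem rather than a direct invariance argument — i.e. build the characteristic foliation abstractly from the Cauchy data on $\{s=0\}$, note it produces a $C^{1,1}$ solution (because $\psi_0$ is smooth and $\dot\psi_0\circ(\nabla\psi_0)^{-1}$ is $C^{0,1}$ on the relevant region, keeping characteristics from crossing on the relevant time interval), and then identify that solution with $\eta$, transporting the line-segment foliation back to $\Lambda_\eta$.
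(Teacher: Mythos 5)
The first thing to note is that the paper does not actually prove Proposition \ref{COR}: the paragraph preceding it only sketches the symplectic idea (the Lagrangian graph lies in the level set $\{F=0\}$, hence should be invariant under the Hamilton flow of $F$, with flat forms and chains invoked to handle the Lipschitz regularity), and the details are explicitly postponed to the sequel \cite{RZ3}. So there is no in-paper argument to compare yours against step by step; the comparison is with that sketch and with the surrounding results of Section \ref{ConeLifespanSection}.

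Your steps (i)--(iv) reproduce the advertised sketch faithfully, and you have put your finger on exactly the point the sketch leaves open: for a merely Lipschitz graph, knowing that $X_F$ lies in the a.e.-defined tangent planes does not by itself yield invariance of the graph under the flow, and neither your proposal nor the paper supplies the flat-chain argument that is supposed to close this; as a self-contained symplectic proof your first route is therefore incomplete (as is the paper's, by design). Your fallback in (v), however, does work and is essentially already contained in the paper: by Theorem \ref{HJThm} a $C^1$ (a fortiori $C^{1,1}$) admissible weak solution is a classical solution of the Hamilton--Jacobi equation; the proof of Proposition \ref{ConeadmissibleIsOptimalProp}, combined with Theorem \ref{ConeLifeSpanThm} which forces $T\le T^\cvx_\span$, identifies $\eta$ with the Legendre/Hopf--Lax solution and shows that along the projected characteristics $s\mapsto\big(s,\,x_0+s\nabla\dot u_0(\nabla\psi_0(x_0))\big)$ the full gradient $(\partial_s\eta,\nabla_x\eta)$ is constant. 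Since the fiber component is constant and the base point moves affinely in $s$, the lifts of these curves to $\Lambda_\eta$ are straight segments along which $\nabla\eta$ is constant, and they foliate $\Lambda_\eta$ because $x_0\mapsto x_0+s\nabla\dot u_0(\nabla\psi_0(x_0))$ is invertible for $s<T^\cvx_\span$. This characteristics route needs only $C^1$ regularity and none of the flat-form machinery; what the symplectic argument deferred to \cite{RZ3} is meant to buy is precisely the ability to treat Lipschitz Lagrangians directly (and a complex analogue for the HCMA), where no Hamilton--Jacobi uniqueness theory is available. One small inaccuracy: $\dot\psi_0\circ(\nabla\psi_0)^{-1}=-\dot u_0$ is smooth on $P\setminus\partial P$, not merely $C^{0,1}$, since $\nabla^2\psi_0>0$; this only helps you.
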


We postpone the details of this symplectic approach to the HRMA and the proof of this proposition
to a sequel \cite{RZ3}, where we also pursue a complex analogue for the HCMA.

\subsection{Organization}
The charactrization of the smooth lifespan and uniqueness of classical solutions 
(Theorem~\ref{HCMACauchyCthreeThm}) is proved in Section~\ref{HamFlowsHCMASection}.
The ill-posedness of the leafwise problem, Theorem~\ref{GENERIC}, is proved in 
Section~\ref{LeafwiseSection}. Proposition~\ref{ToricLifespanProp} concerning the obstructions
to solvability and the characterization of the smooth lifespan in the toric setting
is proved in Section~\ref{LifespanHRMASection}. The characterization of the Legendre
potential as the unique leafwise subsolution is proved in 
Section~\ref{OptimalSubsolutionHRMASection}. The characterization of the $C^1$ lifespan
for the HRMA is given in Section~\ref{ConeLifespanSection}, where we also prove the equivalence between
HRMA and a Hamilton--Jacobi equation.

\section{Smooth lifespan of the HCMA: Proof of Theorem \ref{HCMACauchyCthreeThm}}
\label{HamFlowsHCMASection}

Before proving Theorem \ref{HCMACauchyCthreeThm}, we need to introduce some terminology and background  related to the
ill-posedness of the Cauchy problem.

\begin{definition}
\label{THamAnalyticDef} We say that the Cauchy problem (\ref{HCMARayEq})
with smooth initial data $(M, \omega_{\vp_0},\dot\vp_0)$ is 
$T$-Hamiltonian analytic if for every $z \in M$ the orbit
of $z$ under the Hamiltonian flow of $\dot\vp_0$ with respect to
$\omega_{\vp_0}$ admits a holomorphic extension to the strip
$S_T$. 

\end{definition}

Here, by a holomorphic extension of a map $\gamma:\RR\ra M$ to
$S_T$ we mean a holomorphic map $\tilde\gamma:S_T\ra M$ such that
$\tilde\gamma(0,t)=\gamma(t)$. 
Such an extension is unique when it
exists (this can be seen either by the Cauchy-Riemann equations or
by the Monodromy Theorem). When it exists for  $z\in M$ we denote it
by  $\exp-\i\tau X_{\dot\vp_0}^{\omega_{\vp_0}}.z, \,\tau\in
S_T$  the holomorphic strip extending the Hamiltonian orbit
$\exp tX_{\dot\vp_0}^{\omega_{\vp_0}}.z$.

\begin{definition}\label
{MoserMapDef}
The Moser maps are defined by
\begin{equation}
\label{FsMapEq} 
f_\tau(z):= \exp-\i \tau
X_{\dot\vp_0}^{\omega_{\vp_0}}.z, \quad \tau=s+\i t\in S_T.
\end{equation}
\end{definition}

Thus, by definition, the Moser maps are the analytic continuation
to complex time of the Hamiltonian flow of $\dot\vp_0$ with respect
to the symplectic structure $(M,\o_{\vp_0})$. This terminology will be justified
by the fact that for solutions of the HCMA these maps 
act as Moser maps in the usual sense of symplectic geometry, see
\eqref{FtauPullbackEq} below.

\begin{definition} \label{TGoodDef} 
We say that the Cauchy problem (\ref{HCMARayEq}) is $T$-good if it is
$T$-Hamiltonian analytic and if the Moser map $f_{\tau}$ is  a
differentiable map of $M$ for each $\tau\in S_T$.
\end{definition}

\subsection{HCMA and invertibility of the Moser maps}
\label{HCMAInvertibilityMoserSubsection}

We now begin the proof of Theorem  \ref{HCMACauchyCthreeThm}.

In this subsection we show one direction, namely, 
that a $C^3$ solution of the HCMA gives rise to 
smoothly invertible Moser maps in the sense of Definition \ref{MoserMapDef}.
The proof 
can be extracted from the arguments of \cite{S,D1}.
For the sake of completeness, we present the rather simple argument.

A $C^3$ function $\vp$ on $S_T\times M$ satisfies HCMA if and only if the
form  $\pi_2^\star\omega+\i\ddbar\vp$ has a non-trivial kernel.
Since this form is of type $(1,1)$ and, 
according to (\ref{HCMARayEq}), nondegenerate
on $M$-slices it follows that the
kernel defines a one-dimensional integrable complex distribution on $S_T\times M$.
It also follows that its leaves are holomorphic copies of $S_T$
inside $S_T\times M$, and  the leaf passing through $(0,z)$, for each $z\in M$, may be parametrized in the form 
\begin{equation} \label
{GAMMADEF} 
\{(\tau,\Gamma_z(\tau))\,:\, \tau\in S_T\}\subset S_T\times M,\;\;\; \text{with}\;\Gamma_z: S_T \to M. 
\end{equation}
For each $\tau\in S_T$ define the map $f_\tau: M \to M$ by
\begin{equation} \label
{ftau} 
f_\tau(z):=\Gamma_z(\tau).  \end{equation}   
By the transversality condition and the fact that the leaves
do not intersect each other (follows from uniqueness for ODEs with $C^1$
coefficients---here  we used the $C^3$ assumption for the second time) it follows that $f_\tau$
is a $C^1$ diffeomorphism. 

It remains to prove that the maps $f_\tau$ are Moser maps in the sense of
Definition \ref{MoserMapDef}.
Since the strips are constructed
by integrating the 
vector field $\frac{\del}{\del\tau}+\frac{d f_\tau}{d\tau}$
in $S_T\times M$, this vector field lies in the kernel of
$\pi_2^\star\omega+\i\ddbar\vp$. Therefore,
\begin{equation}
\label
{FtauPullbackEq} 
f_\tau^\star\o_{\vp_\tau}=\o_{\vp_0}.
\end{equation}

Now, since $\vp_0,\dot\vp_0$ are invariant under the $\RR$-action
$(\tau,z)\mapsto (\tau+\i c,z), c\in\RR$, uniqueness of smooth
solutions implies that so is $\vp_\tau$. The proof of uniqueness is postponed to Lemma
\ref{ExistenceGivenTGoodInvertibleLemma} below, however its proof 
does not rely on the rest of this subsection. 
By abuse of notation we write $\vp_s=\vp_\tau$ when no confusion arises, where
$\tau=s+\i t$. 

Next,
\begin{equation}\label
{FlowLeavesEqs}
\frac{df_\tau}{dt}=X_{\dot\vp_s}^{\o_{\vp_s}}\circ f_\tau=-J\nabla_{g_{\vp_s}}{\dot\vp_s}\circ f_\tau,\qquad
\frac{df_\tau}{ds}=-\nabla_{g_{\vp_s}}{\dot\vp_s}\circ f_\tau,\quad
f_0=\id,
\end{equation}
since $\frac{\del}{\del\tau}-\nabla^{1,0}_{g_{\vp_s}}{\dot\vp_s}\in\ker
(\pi_2^\star\omega+\i\ddbar\vp)\big|_{(\tau,\Gamma_z(\tau))}$, indeed
$$
\iota_\frac{\del}{\del\tau}(\pi_2^\star\omega+\i\ddbar\vp)=
\i\dbar\frac{\del\vp}{\del \tau}=\i \dbar\dot\vp_s,
$$
and
$$
\iota_{\nabla_{g_{\vp_s}}{\dot\vp_s}}(\pi_2^\star\omega+\i\ddbar\vp)
=
\iota_{\nabla_{g_{\vp_s}}{\dot\vp_s}}\o_{\vp_s}
=
d^c\dot\vp_s=\i(\dbar-\del)\dot\vp_s,
$$
and we use the convention $\frac{\del}{\del \tau}=\frac12\frac{\del}{\del s}-\frac\i2\frac{\del}{\del t}$
and $Y^{1,0}=\frac12Y-\frac\i2JY$.

It then follows from (\ref{FtauPullbackEq}) that
\begin{equation}
\label
{MoserDiffDecompositionEq}
f_{s+\i t} = h_{s+\i t}\circ f_{s},
\end{equation}
with $h_{s+\i t}$ a $C^1$ symplectomorphism of $(M,\omega_{\vp_s})$.
Also, from \eqref{MoserDiffDecompositionEq} and \eqref{FlowLeavesEqs} 
\begin{equation} \label{h} 
h_{s+\i t}=\exp tX_{\dot\vp_s}^{\o_{\vp_s}}.
\end{equation}
We conclude therefore from \eqref{MoserDiffDecompositionEq}
and \eqref{FlowLeavesEqs} that the maps $f_\tau$ defined by \eqref{ftau} satisfy \eqref{FsMapEq},
i.e., for each $z\in M$, induce analytic continuation to the strip of the Hamiltonian
orbit $\exp tX_{\dot\vp_0}^{\o_{\vp_0}}.z$. Hence we have shown both that 
the Cauchy data is $T$-good and that
the Moser maps of Definition \ref{MoserMapDef} are $C^1$ and admit $C^1$ inverses
for each $s\in[0,T]$.
This completes the proof of the first half of Theorem \ref{HCMACauchyCthreeThm}.

We conclude this subsection with some further properties of the Moser maps.
In view of \eqref{FORMAL}, the Moser maps which are relevant to the solution of HCMA  are the ones with
$t = 0$, and their definition only requires analytic continuation of the 
Hamiltonian flow of $X_{\dot\vp_0}^{\o_{\vp_0}}$ to a rectangle 
$[0, T]\times(-\epsilon, \epsilon)$.
However, such an analytic continuation necessarily induces one to the strip $S_T$.

\begin{cor} \label{STRIP}  Suppose that $h_{\sqrt{-1} t}= \exp tX_{\dot\vp_0}^{\omega_{\vp_0}} z$ admits an analytic continuation to
$[0, T]\times(-\epsilon, \epsilon)$.
Then $h_{\sqrt{-1}t }$   admits an analytic continuation to
$S_T$. \end{cor}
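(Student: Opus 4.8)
The plan is to exploit the $\RR$-invariance of the Cauchy data together with the group law of the Hamiltonian flow. Fix $z\in M$. By hypothesis, the orbit $t\mapsto \exp tX_{\dot\vp_0}^{\o_{\vp_0}}.z$ (defined a priori for $t$ in a neighborhood of $0$ in $\RR$) admits a holomorphic extension, call it $G_z$, to the rectangle $R=[0,T]\times(-\epsilon,\epsilon)\subset\CC$; that is, $G_z$ is holomorphic on the interior, continuous up to the boundary, and $G_z(\i t)=\exp tX_{\dot\vp_0}^{\o_{\vp_0}}.z$ for real $t\in(-\epsilon,\epsilon)$. The point is that the real-time flow is a genuine global flow (the manifold $M$ is compact, so $X_{\dot\vp_0}^{\o_{\vp_0}}$ is complete), so for any fixed real $t_0$ the map $z\mapsto \exp t_0 X_{\dot\vp_0}^{\o_{\vp_0}}.z$ is a (real-analytic, since the data is) diffeomorphism of $M$, and it commutes with the complex-time flow wherever the latter is defined.

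The key step: define, for $t_0\in\RR$, the candidate extension on the translated rectangle $R+\i t_0$ by
\begin{equation}
\label{ShiftedExtensionEq}
G_z^{(t_0)}(\tau):=\big(\exp t_0 X_{\dot\vp_0}^{\o_{\vp_0}}\big)\big(G_z(\tau-\i t_0)\big),\qquad \tau\in R+\i t_0.
\end{equation}
Each $G_z^{(t_0)}$ is holomorphic on the interior of $R+\i t_0$, being the composition of a holomorphic map with the fixed real-analytic diffeomorphism $\exp t_0 X_{\dot\vp_0}^{\o_{\vp_0}}$ of $M$; and on the real axis segment $\{0\}\times((-\epsilon,\epsilon)+t_0)$ it agrees with the real Hamiltonian orbit, by the group law $\exp (t_0+t)X = (\exp t_0 X)(\exp tX)$. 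The translates $R+\i t_0$, as $t_0$ ranges over $\RR$, cover the strip $S_T=[0,T]\times\RR$, and on overlaps $R+\i t_0$ and $R+\i t_1$ the two definitions $G_z^{(t_0)}$ and $G_z^{(t_1)}$ agree: they are both holomorphic and they coincide on the common real-axis segment (again by the group law), hence on the whole overlap by the identity theorem (the uniqueness of holomorphic extensions noted after Definition \ref{THamAnalyticDef}). Therefore the $G_z^{(t_0)}$ glue to a single holomorphic map $S_T\to M$ restricting to the Hamiltonian orbit on $\{0\}\times\RR$, which is precisely the asserted analytic continuation $h_{\sqrt{-1}t}$ to $S_T$.

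The only mild obstacle is bookkeeping about the \emph{width} in the $s$-direction: the rectangle $R$ has full width $[0,T]$, and translation by $\i t_0$ preserves this width, so the union is exactly $[0,T]\times\RR=S_T$ with no shrinkage — this is why the statement can assert extension to all of $S_T$ rather than to a thinner strip. One should also check that the glued map is continuous up to the boundary $\{0\}\times\RR$ and $\{T\}\times\RR$; this is inherited from the corresponding property of $G_z$ on $R$ together with continuity of $\exp t_0 X_{\dot\vp_0}^{\o_{\vp_0}}$. Since the construction \eqref{ShiftedExtensionEq} is manifestly smooth in $z$ when $G_z$ is (the diffeomorphisms $\exp t_0 X$ act smoothly), the regularity of the resulting Moser maps $f_\tau$ on $S_T$ matches that on the original rectangle, which is all that is needed downstream.
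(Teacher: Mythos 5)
Your construction breaks at its central step: the maps $G_z^{(t_0)}=\exp t_0X_{\dot\vp_0}^{\o_{\vp_0}}\circ G_z(\,\cdot\,-\i t_0)$ are not holomorphic. The time-$t_0$ Hamiltonian flow is a symplectomorphism of $(M,\o_{\vp_0})$ but not a biholomorphism of $(M,J)$ unless $X_{\dot\vp_0}^{\o_{\vp_0}}$ is real-holomorphic, i.e.\ Lie-commutes with $JX_{\dot\vp_0}^{\o_{\vp_0}}$; post-composing a holomorphic map with a merely smooth diffeomorphism destroys holomorphy (and "real-analytic, since the data is" is unwarranted anyway -- the data is only $C^3$ -- but even a real-analytic symplectomorphism is not holomorphic). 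For the same reason your asserted commutation of the real-time flow with the complex-time continuation is unjustified, and it is exactly what fails generically: see Remarks \ref{NoGroupLawRemark} and \ref{ToricNoCommutativityRemark}, where the obstruction is $[X_H,JX_H]\ne0$. Concretely, in the toric model of Lemma \ref{AnalyticContinuationHamOrbitsToricLemma} one has $f_{s+\i t}(x,\th)=(x-sV(x),\th-tV(x))$ with $V=(\nabla^2\psi_0)^{-1}\nabla_x\dot\vp_0$, while $\exp t_0X_{\dot\vp_0}^{\o_{\vp_0}}\big(f_{s+\i (t-t_0)}(x,\th)\big)$ has angular part $\th-(t-t_0)V(x)-t_0V(x-sV(x))$; unless $V$ is constant along the leaf this is neither holomorphic in $s+\i t$ nor equal to $f_{s+\i t}(x,\th)$, and $G_z^{(t_0)}$, $G_z^{(t_1)}$ differ on overlaps by $(t_0-t_1)\left[V(x)-V(x-sV(x))\right]$, so the gluing itself collapses -- the identity-theorem-from-a-boundary-segment argument needs the holomorphy you do not have. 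A repaired version, using the analytic continuation of the orbit of the shifted point $\exp t_0X_{\dot\vp_0}^{\o_{\vp_0}}.z$ rather than pushing the old continuation forward by the flow, would at least be holomorphic, but it requires the hypothesis at every point of the real orbit, which is not what is assumed.

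The paper's argument avoids the $s=0$ group law entirely: by \eqref{MoserDiffDecompositionEq}--\eqref{h}, on the rectangle one has $f_{s+\i t}=h_{s+\i t}\circ f_s$ with $h_{s+\i t}=\exp tX_{\dot\vp_s}^{\o_{\vp_s}}$, i.e.\ for each \emph{fixed} $s$ the motion in the imaginary direction is a genuine real Hamiltonian flow for the time-$s$ data $(\o_{\vp_s},\dot\vp_s)$. Such a flow is complete on the compact $M$, so the group law in $t$ at fixed $s$ extends $h_{s+\i t}$ to all $t\in\RR$, and then $f_{s+\i t}:=h_{s+\i t}\circ f_s$ is defined on all of $S_T$. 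The extension is thus performed strictly in the $t$-direction at each height $s$, using the time-$s$ Hamiltonian -- the only group law available -- whereas your proposal tries to manufacture it from the $s=0$ flow alone, which is precisely what Remark \ref{NoGroupLawRemark} warns cannot be done.
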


Indeed, by \eqref{h},  $h_{s + \sqrt{-1}t}(z)$ is the orbit of a Hamiltonian flow for fixed $s$ and varying 
$t\in \RR$. 
Hence it may be holomorphically extended by the group law 
$$\exp (t_1 + t_2) X_{\dot\vp_s}^{\o_{\vp_s} }(z) = \exp t_1 X_{\dot\vp_s}^{\o_{\vp_s}} (\exp t_2X_{\dot\vp_s}^{\o_{\vp_s}}z). $$
Therefore, by \eqref{MoserDiffDecompositionEq}, one may define $f_{s+\i t}$ for all $s+\i t\in S_T$.

\begin{remark} 
\label{NoGroupLawRemark}
{\rm
In comparison to this group law for fixed $s$,  $f_{s + \sqrt{-1} t}$ does not satisfy a group
law in the complex parameter $s + \sqrt{-1} t$ and thus we cannot conclude that the flow has an analytic continuation
to a half-plane by the same argument. 
This may be seen from the fact that $X_{\dot{\vp}_s}^{\omega_{\vp_s}}$ 
does not Lie-commute with its image under $J$. 
Indeed, commutativity
fails even for generic Cauchy data in the case of toric varieties---see
Remark \ref{ToricNoCommutativityRemark}. 
}
\end{remark}

\subsection{Existence of classical solutions for the HCMA}
\label{AnalyticContinuationHCMASubsection}

In this subsection we continue the proof of Theorem \ref{HCMACauchyCthreeThm},  and
establish the existence of a classical solution to the HCMA under our assumptions.

We now assume that the Cauchy problem for $(\o_{\vp_0},\dot\vp_0)$
is $T$-good and solve the HCMA under the additional assumption of invertibility.

\begin{lem}
\label{ExistenceGivenTGoodInvertibleLemma}
Let $\o_{\vp_0}\in C^1$ and $\dot\vp_0\in C^3$.
Assume that the Cauchy problem for $(\o_{\vp_0},\dot\vp_0)$
is $T$-good, and that for each $\tau\in S_T$ the map $f_\tau$ given by
(\ref{FsMapEq}) is smoothly invertible. Then the HCMA (\ref{HCMARayEq}) admits
a $C^3$ $\pi_2^\star\o$-psh solution.
\end{lem}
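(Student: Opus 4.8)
The plan is to reverse the construction of the previous subsection: there we started from a $C^3$ solution and extracted the Moser maps; here we start from the Moser maps $f_\tau$ (which exist and are smoothly invertible by the $T$-good and invertibility hypotheses) and build a $\pi_2^\star\o$-psh solution of the HCMA out of them via the Semmes--Donaldson formula \eqref{FORMAL}. First I would define, for each $s\in[0,T]$, the $(1,1)$-form $\om_s := (f_s^{-1})^\star\o_{\vp_0}$ on $M$. Since $f_s$ is a smooth (indeed $C^1$) diffeomorphism and $\o_{\vp_0}\in C^1$ is a positive $(1,1)$-form, $\om_s$ is a positive $(1,1)$-form on $M$; moreover, using \eqref{FlowLeavesEqs} (the $s$-flow equation $df_\tau/ds=-\nabla_{g_{\vp_s}}\dot\vp_s\circ f_\tau$) together with \eqref{FtauPullbackEq}, one checks that $\om_s$ stays in the \kahler class $[\o_{\vp_0}]$ for all $s$, so by the $\ddbar$-lemma there is a function $\vp_s$ on $M$, unique up to an additive constant depending on $s$, with $\om_s-\o_{\vp_0}=\i\ddbar\vp_s$; the constant is pinned down by the initial condition and by differentiating at $s=0$ using $f_0=\id$ and $df_\tau/ds|_{0}=-\nabla\dot\vp_0$, which gives $\partial_s\vp|_{s=0}=\dot\vp_0$. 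Extend $\vp$ to $S_T\times M$ by $\vp(s+\i t,z):=\vp_s(z)$, i.e.\ $t$-independently; this is legitimate because of the $\RR$-invariance, and it is the unique natural choice.

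The next step is to verify that this $\vp$ is $\pi_2^\star\o$-psh on $S_T\times M$ and solves the HCMA, i.e.\ $(\pi_2^\star\o+\i\ddbar\vp)^{n+1}=0$ while $(\pi_2^\star\o+\i\ddbar\vp)^n\ne0$. The nondegeneracy on $M$-slices is immediate since on each slice the form restricts to $\om_s>0$. For the degeneracy of the $(n+1)$st power, I would exhibit an explicit element of the kernel: the graphs $\{(\tau,f_\tau(z)):\tau\in S_T\}$ foliate $S_T\times M$ by holomorphic discs (holomorphicity in $\tau$ is exactly the $T$-Hamiltonian analyticity, and the map $z\mapsto\Gamma_z$ is a diffeomorphism of parameter spaces because $f_\tau$ is invertible), and I claim the tangent vector field $\frac{\partial}{\partial\tau}+\frac{df_\tau}{d\tau}$ to this foliation lies in $\ker(\pi_2^\star\o+\i\ddbar\vp)$. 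Contracting $\pi_2^\star\o+\i\ddbar\vp$ with $\frac{\partial}{\partial\tau}$ gives $\i\dbar\partial_\tau\vp=\i\dbar\dot\vp_s$ (using $\partial_\tau\vp=\frac12\partial_s\vp=\frac12\dot\vp_s$ and $t$-independence, matching the computation displayed after \eqref{FlowLeavesEqs}), while contracting with the horizontal part $\frac{df_\tau}{d\tau}=-\nabla^{1,0}_{g_{\vp_s}}\dot\vp_s\circ f_\tau$ gives, via \eqref{FtauPullbackEq} and the identity $\iota_{\nabla_{g_{\vp_s}}\dot\vp_s}\om_{\vp_s}=\i(\dbar-\partial)\dot\vp_s$, a term cancelling the first; so the contraction vanishes on the $(0,1)$ and $(1,0)$ parts and hence the full contraction is zero. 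Since this kernel direction is transverse to the $M$-slices, the form has rank exactly $n$ everywhere, giving $(\pi_2^\star\o+\i\ddbar\vp)^{n+1}=0$. Positivity of $\pi_2^\star\o+\i\ddbar\vp$ as a semipositive $(1,1)$-form (hence psh-ness of $\vp$) follows from positivity on slices plus the fact that a $(1,1)$-form which is $\geq0$ on a hyperplane distribution and annihilates a complementary direction is $\geq0$; one records that along the way.

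The main obstacle I anticipate is the regularity bookkeeping, i.e.\ confirming that $\vp\in C^3(S_T\times M)$ rather than merely $C^1$ or $C^2$. The Moser maps $f_\tau$ are built by flowing the vector field in \eqref{FlowLeavesEqs}, whose coefficients involve $\nabla_{g_{\vp_s}}\dot\vp_s$; with $\dot\vp_0\in C^3$ and $\o_{\vp_0}\in C^1$ one expects $f_\tau$ to be $C^1$ in all variables and the $\tau$-dependence to be holomorphic (hence $C^\infty$ in $\tau$), but pushing this to a $C^3$ statement for $\vp$ requires tracking how many derivatives are lost in (a) solving the ODE \eqref{FlowLeavesEqs}, (b) inverting $f_s$, (c) pulling back $\o_{\vp_0}$, and (d) applying the $\ddbar$-lemma, which is regularity-preserving on a compact \kahler manifold. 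I would handle this by a careful tally: the potential $\vp_s$ gains one derivative over $\om_s$ from the $\ddbar$-lemma, $\om_s$ has the regularity of $\o_{\vp_0}$ composed with the $C^1$-in-space, holomorphic-in-$\tau$ map $f_s^{-1}$, and the holomorphicity in $\tau$ combined with ellipticity of $\ddbar$ on slices should bootstrap the mixed regularity up to $C^3$; this elliptic-bootstrap-plus-holomorphy argument, together with the verification that no constant-in-$s$ ambiguity spoils $C^3$ dependence on $s$, is the technical heart. Finally, uniqueness in $C^3(S_T\times M)\cap PSH$ is deferred to Lemma \ref{ExistenceGivenTGoodInvertibleLemma}'s companion (the conservation-law argument promised in the introduction), so it is not part of this lemma's proof.
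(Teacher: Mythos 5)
Your overall strategy is the paper's: reconstruct the solution from the Moser maps via the Semmes--Donaldson formula \eqref{FORMAL}, use the $\ddbar$-lemma to produce potentials, and exhibit the kernel direction along the holomorphic graphs $(\tau,f_\tau(z))$. But as written the verification step is circular. You invoke \eqref{FlowLeavesEqs}, the decomposition \eqref{MoserDiffDecompositionEq}--\eqref{h} (implicitly, when you declare the $t$-independent extension ``legitimate because of $\RR$-invariance''), and above all the identity $\frac{df_\tau}{d\tau}=-\nabla^{1,0}_{g_{\vp_s}}\dot\vp_s\circ f_\tau$ in order to prove that $\frac{\del}{\del\tau}+\frac{df_\tau}{d\tau}$ lies in $\ker(\pi_2^\star\o+\i\ddbar\vp)$. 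All of these were derived in \S\ref{HCMAInvertibilityMoserSubsection} only under the hypothesis that a $C^3$ solution already exists; in the existence direction the only a priori information about $f_\tau$ is that $\tau\mapsto f_\tau(z)$ is the holomorphic continuation of the Hamiltonian orbit, so $\frac{df_\tau}{dt}=X^{\o_{\vp_0}}_{\dot\vp_0}\circ f_\tau$ is known only at $s=0$, and the relation of $\frac{df_\tau}{d\tau}$ to the potential you are constructing is exactly what must be proved, not quoted. The same circularity appears in your justification that $(f_s^{-1})^\star\o_{\vp_0}$ stays in the class $[\o_{\vp_0}]$: that follows simply from $f_s$ being a diffeomorphism smoothly homotopic to the identity, with no appeal to \eqref{FlowLeavesEqs} or \eqref{FtauPullbackEq}.

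The paper's proof avoids both issues by construction. It defines $\vp(\tau,\cdot)$ for each \emph{complex} $\tau\in S_T$ (not $t$-independently) by the explicit formula \eqref{ExplicitGreenFunctionSolEq}, $\vp(\tau,\cdot)=-\i\partial^\star_{\o_{\vp_0}}\dbar^\star_{\o_{\vp_0}}G^2_{\o_{\vp_0}}\big((f_\tau^{-1})^\star\o_{\vp_0}-\o_{\vp_0}\big)+\vp_0+\frac sV\int_M\dot\vp_0\,\o_{\vp_0}^n$. This single formula settles the points you defer: the operator is a pseudodifferential operator of order $-2$ with smooth coefficients, so the $C^3$ regularity is immediate (no bootstrap or derivative tally needed); the additive constant is fixed by the last term, which also makes $\partial_s\vp|_{s=0}=\dot\vp_0$ a direct computation via $\calL_{\nabla_{g_{\vp_0}}\dot\vp_0}\o_{\vp_0}=-\i\ddbar\dot\vp_0$ and the $\ddbar$-lemma, while $\vp(\i t,\cdot)=\vp_0$ holds because $f_{\i t}$ is an $\o_{\vp_0}$-symplectomorphism. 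The identity $f_\tau^\star\o_{\vp_\tau}=\o_{\vp_0}$ then holds tautologically for every $\tau\in S_T$, and the kernel property is obtained by differentiating this identity in $\tau$ and using holomorphy of $\tau\mapsto f_\tau(z)$ --- no gradient formula for $\frac{df_\tau}{d\tau}$ and no $\RR$-invariance is needed (the latter is a consequence of the separate uniqueness lemma, not an input). If you want to keep your $t$-independent ansatz and your contraction computation, you must first derive \eqref{FlowLeavesEqs} and \eqref{MoserDiffDecompositionEq}--\eqref{h} for the constructed $\vp$ from the pullback identity and holomorphy, which is essentially redoing the paper's differentiation argument.
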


\begin{proof}
Define a $C^3$ function on $S_T\times M$ by
\begin{equation}
\label{ExplicitGreenFunctionSolEq} \begin{aligned}
\vp(s+\i t,z)
:=  
&-\i\partial_{\omega_{\vp_0}}^\star \dbar_{\omega_{\vp_0}}^\star
G_{{\omega_{\vp_0}}}^2  \big( (f_\tau^{-1})^\star \omega_{\vp_0}
- \omega_{\vp_0} \big)(z)
\cr
& +\vp_0(z)
+\frac sV\int_M\dot\vp_0\o_{\vp_0}^n, \end{aligned}
\end{equation}
where $G_{{\omega_{\vp_0}}}$ denotes Green's function for the Laplacian
$\Delta_{{\omega_{\vp_0}}}=-\dbar\circ\dbar^\star-\dbar^\star\circ\dbar$ acting on
forms. The operator $\i\partial_{\omega_{\vp_0}}^\star \dbar_{\omega_{\vp_0}}^\star
G_{{\omega_{\vp_0}}}^2$ is a pseudo-differential operator of order $-2$ with
smooth coefficients. By our assumptions it then follows that $\vp$ is $C^3$.

We claim that $\vp$ solves the HCMA (\ref{HCMARayEq}).
First, observe that since $f_{\i t}(z)=\exp tX_{\dot\vp_0}^{\o_{\vp_0}}.z$
is a symplectomorphism the forumula (\ref{ExplicitGreenFunctionSolEq}) implies
that $\vp(\i t,z)=\vp(0,z)=\vp_0(z)$. 
Next,
$$\begin{aligned}
\frac{\del\vp(\i t,z)}{\del s}
= 
&-\i\partial_{\omega_{\vp_0}}^\star \dbar_{\omega_{\vp_0}}^\star
G_{{\omega_{\vp_0}}}^2
\big(
\calL_{-\frac{df_{\i t}}{ds}}\omega_{\vp_0}
\big)(f_{\i t}(z)) 
\cr
&+\frac1V\int_M\dot\vp_0\o_{\vp_0}^n. 
\end{aligned}
$$
Since 
$$
\frac{df_{\i t}}{ds}=JX_{\dot\vp_0}^{\o_{\vp_0}}(f_{\i t}(z))=-\nabla_{g_{\vp_0}}\dot\vp_0(f_{\i t}(z)),
$$
we have 
$$
\calL_{-\frac{df_{\i t}}{ds}}\omega_{\vp_0}=\i\ddbar\dot\vp_0, 
$$ 
and the $\ddbar$-lemma
\cite[p. 149]{GH} implies that $\frac{\del\vp(\i t,z)}{\del s}=\dot\vp_0(z)$.

Finally, applying the $\ddbar$-lemma again implies that \eqref{FtauPullbackEq}  holds
where $\vp_\tau:=\vp(\tau,\,\cdot\,)$, for all $\tau\in S_T$. Since $f_\tau$
is a diffeomorphism and moreover a smooth homotopy to the identity map it follows
that $\o_{\vp_\tau}$ is a \K metric for each $\tau\in S_T$. In particular,
$(\pi_2^\star\o_{\vp_0}+\i\ddbar\vp)^n\ne0$. Differentiating
(\ref{FtauPullbackEq} ) we find that $\frac{\del}{\del \tau}+\frac{df_\tau}{d\tau}$ is
a holomorphic vector field in the kernel of $\pi_2^\star\o_{\vp_0}+\i\ddbar\vp$.
It follows that $(\pi_2^\star\o_{\vp_0}+\i\ddbar\vp)^{n+1}=0$ on $S_T\times M$,
as required. This concludes the proof of existence.
\end{proof}

\subsection{Uniqueness of classical solutions for the HCMA}
\label{UniquenessHCMASubsection}

In this subsection we complete the proof of Theorem \ref{HCMACauchyCthreeThm},  and
establish the uniqueness of classical solutions to the HCMA under our assumptions.

Before giving the proof let us emphasize some of the subtleties involved.

First, the uniqueness we establish is essentially
equivalent to showing that any solution must be $\RR$-invariant
when the Cauchy data is $\RR$-invariant.
A subtle point is that the HCMA is only equivalent to the geodesic
equation under the assumption of $\RR$-invariance, which is implicit
in the arguments of Semmes and Donaldson. In general, the HCMA
is equivalent to the more complicated WZW equation. Thus, the uniqueness
proof cannot a priori use the identities we established in \S\ref{HCMAInvertibilityMoserSubsection}
for $C^3$ $\RR$-invariant solutions. 
 We need to derive these identities in
the proof, and we do so by first establishing short-time uniqueness and then 
extending this to a global statement.

Thus, if we only wanted to prove uniqueness of
$\RR$-invariant solutions, the proof would simplify considerably. Alternatively, 
one could have defined the class of admissible subsolutions to be $\RR$-invariant
$\pi_2^\star \o$-psh functions. It follows from Lemma \ref{UniquenessGivenTGoodInvertibleLemma}
below that such a restriction would be redundant.

Second, the proof does not follow directly from the arguments
of Bedford--Kalka \cite{BK} and Bedford--Burns \cite[Proposition 1.1]{BB},
where uniqueness is proved for a simpler situation, namely for the equation
$(\i\ddbar u)^m=0$ on $\CC^m$. Parts of the proof are local in nature,
essentially the Cauchy--Kowalevskaya theorem on each strip,
and thus adapt to our setting. However, the relative \K potential $\pi_2^\star\o$
makes the situation more complicated since the leafwise equations are now
not the fixed Laplace equation on $S_T$ but rather an inhomogeneous Poisson equation
that varies from strip to strip, and one has to make sure that this equation does not
degenerate. Thus, we need to invoke a global conservation law type argument that
is special for our HCMA \eqref{HCMARayEq}.

\begin{lem}
\label{UniquenessGivenTGoodInvertibleLemma}
Let $\o_{\vp_0}\in C^1$ and $\dot\vp_0\in C^3$.
Assume that the Cauchy problem for $(\o_{\vp_0},\dot\vp_0)$
is $T$-good, and that for each $\tau\in S_T$ the map $f_\tau$ given by
(\ref{FsMapEq}) is smoothly invertible. 
Then any $C^3$ $\pi_2^\star\o$-psh solution of the HCMA (\ref{HCMARayEq}) 
is unique, and in particular $\RR$-invariant.
\end{lem}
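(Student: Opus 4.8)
The plan is to prove uniqueness in two stages: first a short-time (local) uniqueness statement obtained by reducing the HCMA on each leaf to a Cauchy problem for an elliptic equation and invoking analyticity, and then a global continuation argument that upgrades this to uniqueness on all of $S_T\times M$. I would begin by fixing a $C^3$ $\pi_2^\star\o$-psh solution $\vp$ of \eqref{HCMARayEq} and, without assuming $\RR$-invariance, examining the kernel of $\pi_2^\star\o+\i\ddbar\vp$: since this is a nonnegative $(1,1)$-form of rank $\le n$ that is nondegenerate on $M$-slices, its kernel is a rank-one integrable holomorphic distribution, whose leaves are holomorphic strips $\tau\mapsto(\tau,\Gamma_z(\tau))$ as in \eqref{GAMMADEF}. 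The first key step is to identify these leaves with the Moser maps' orbits: I would compute, exactly as in \S\ref{HCMAInvertibilityMoserSubsection} but \emph{without} presupposing $\RR$-invariance, the leafwise ODE for $\Gamma_z$ from the condition $\iota_{\d/\d\tau + d\Gamma_z/d\tau}(\pi_2^\star\o+\i\ddbar\vp)=0$, obtaining the $t$-component $d\Gamma_z/dt = X^{\o_{\vp(\tau,\cdot)}}_{\del_t\vp(\tau,\cdot)}\circ\Gamma_z$ and an $s$-component involving $\del_s\vp$ and $\del_t\vp$. Near $s=0$, the initial conditions force $\del_s\vp(0,t,z)=\dot\vp_0(z)$ and $\del_t\vp(0,t,z)=0$, so the orbit data is pinned down on $\{0\}\times\RR$.

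The second key step is short-time uniqueness. Restricting to a single leaf, the HCMA becomes the statement that $\gamma_z^\star(\pi_2^\star\o+\i\ddbar\vp)=0$, i.e. $\Delta_\tau(\vp\circ\gamma_z)$ equals a prescribed expression in the (now determined) geometry of the leaf — an inhomogeneous Poisson equation on the strip with Cauchy data on $\{0\}\times\RR$ prescribed by $(\vp_0,\dot\vp_0)$. Because the coefficients are real-analytic in the leaf parameter once the leaf is fixed (the Cauchy data and the Hamiltonian flow are analytic in $\tau$ by $T$-goodness), the Cauchy--Kowalevskaya theorem gives a unique solution on a small bilateral strip, hence $\vp\circ\gamma_z$ is determined near $s=0$; doing this for all $z$ and using that the $f_\tau$ are diffeomorphisms shows $\vp$ itself is uniquely determined, and in particular $\RR$-invariant, for small $s$. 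Here is where the subtlety flagged in the text enters: the leafwise equation is \emph{not} the fixed Laplace equation, so one must simultaneously control the leaves and the equation; I would handle this by treating the coupled system (leaf ODE together with the leafwise Poisson equation) as a single Cauchy--Kowalevskaya system with $s$ as the evolution direction. Once short-time $\RR$-invariance is established, the identities \eqref{FtauPullbackEq}, \eqref{FlowLeavesEqs}, \eqref{MoserDiffDecompositionEq}, \eqref{h} from \S\ref{HCMAInvertibilityMoserSubsection} become available (they were derived under $\RR$-invariance), so for small $s$ the leaves of $\vp$ coincide with the Moser-map strips $f_\tau$, and the explicit formula \eqref{ExplicitGreenFunctionSolEq} from Lemma \ref{ExistenceGivenTGoodInvertibleLemma} pins $\vp$ down completely.

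The third step is the global continuation, which is the main obstacle and the reason a purely local argument à la Bedford--Burns does not suffice. Let $T^\ast$ be the supremum of times $s_0\le T$ such that $\vp$ is $\RR$-invariant and agrees with the solution \eqref{ExplicitGreenFunctionSolEq} on $S_{s_0}\times M$. By the short-time step $T^\ast>0$; I must show $T^\ast = T$. The obstruction to naive continuation is that the stripwise elliptic (Poisson) problems could degenerate as $s\to T^\ast$, i.e. $\o_{\vp_s}$ could fail to stay a \kahler metric, so that the leaf structure breaks down; this is exactly why a ``conservation law type argument'' is needed. The plan is: on $S_{T^\ast}\times M$ the solution equals \eqref{ExplicitGreenFunctionSolEq}, and the conservation law $\dot\vp_s\circ f_s = \dot\vp_0$ (Proposition \ref{FTPHIDOT}) together with $f_\tau^\star\o_{\vp_\tau}=\o_{\vp_0}$ and the \emph{hypothesis} that $f_\tau$ is smoothly invertible for \emph{all} $\tau\in S_T$ (not merely $\tau\in S_{T^\ast}$) forces $\o_{\vp_{s}}$ to remain a genuine \kahler metric, uniformly, up to and slightly past $s=T^\ast$; hence the leafwise Poisson equations do not degenerate, the leaf structure of $\vp$ extends continuously past $T^\ast$, and the short-time argument can be re-run with base point $s=T^\ast$ to extend $\RR$-invariance past $T^\ast$ — contradicting maximality unless $T^\ast=T$. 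I expect the technical heart to be verifying the non-degeneracy/a-priori estimate at $s=T^\ast$: one knows $\vp\in C^3$ a priori and that the Moser maps are globally invertible, and must convert this into the statement that the leaf foliation of $\vp$ (defined via $\ker(\pi_2^\star\o+\i\ddbar\vp)$) coincides with the Moser foliation on the closed substrip and therefore persists; the argument should use the uniqueness of the holomorphic extension of Hamiltonian orbits (Corollary \ref{STRIP}) to match the two foliations leaf by leaf. Once $T^\ast=T$, uniqueness on all of $S_T\times M$, and $\RR$-invariance, follow at once.
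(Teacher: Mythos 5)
Your overall architecture (short-time uniqueness leaf by leaf, then a continuation argument using a conservation law and the invertibility hypothesis to rule out degeneration of the stripwise elliptic problems) is the same as the paper's, and your global step is essentially the paper's: it derives \eqref{ConservationLawHCMAEq} once short-time $\RR$-invariance is known, computes $a_z(s+\i t)=\frac12|\nabla_{g_{\vp_s}}\dot\vp_s|^2_g(f_\tau(z))$, and uses \eqref{FtauPullbackEq} and compactness to get the a priori bounds $c<a_z<C$ that let the argument be rerun from $s=\eps$. (Minor point: the conservation law you want is \eqref{ConservationLawHCMAEq} for smooth solutions of the HCMA, derived inside the proof; Proposition \ref{FTPHIDOT} is a statement about weak $C^1$ solutions of the HRMA and is proved later, so it cannot be quoted here.)

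The genuine gap is in your short-time step. You propose to treat the coupled system (leaf ODE together with the leafwise Poisson equation) as a single Cauchy--Kowalevskaya system with $s$ as the evolution direction and to conclude uniqueness from it. This fails for two reasons. First, Cauchy--Kowalevskaya gives existence and uniqueness only within the class of real-analytic solutions and requires real-analytic data; here the solution is merely $C^3$ and the data merely $\o_{\vp_0}\in C^1$, $\dot\vp_0\in C^3$, so nothing forces the actual solution to coincide with whatever analytic solution C--K would produce --- uniqueness of non-analytic solutions needs Holmgren-type arguments at best. Second, Holmgren applies to \emph{linear} equations with analytic coefficients, whereas your coupled system is nonlinear precisely because the leaf and the potential are intertwined. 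The paper's proof is organized to remove this coupling before any leafwise PDE argument: the equality of Cauchy data plus the Semmes identity \eqref{SemmesGeodEq} at $s=0$ (valid there without $\RR$-invariance of the solution, since the data is $\RR$-invariant) shows any two $C^3$ solutions agree to second order on $\Sigma=\{0\}\times\RR\times M$, hence have the same kernel along $\Sigma$ and therefore the same holomorphic-strip foliation; only then is the leafwise equation a fixed linear problem, reduced via $\Delta_z=a_z^{-1}\Delta_0$ (using $a_z>0$ on nontrivial leaves near $s=0$) to the Cauchy problem for the Euclidean Laplacian on the strip with zero data, whose uniqueness for bounded $C^2$ solutions on the \emph{unbounded} strip is not C--K but a separate statement, Lemma \ref{LaplaceUniquenessLemma}, proved via Widder's Poisson representation. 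Your proposal also omits the trivial leaves ($d\dot\vp_0(z)=0$, where $a_z\equiv 0$ and the leafwise reduction degenerates), which the paper handles separately using \eqref{FtauPullbackEq}, \eqref{SemmesGeodEq} and the initial conditions. Without replacing the C--K step by an argument of this kind, the short-time uniqueness --- and hence the whole proof --- does not go through.
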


\begin{proof}
Assume that $\vp,\rho\in C^3$ are
both $\pi_2^\star\o$-psh solutions of (\ref{HCMARayEq}). 
Then the equation \eqref{HCMARayEq} and the equality of the Cauchy data 
implies that all the second
derivatives of $\vp$ and $\rho$, possibly with the exception of the second $s$ derivative,
agree on the hypersurface $\Sigma:=\{0\}\times\RR\times M$.
Now the form
$\pi_2^\star\omega+\i\ddbar\Phi$ restricts to a positive form on
$\Sigma$ ensuring that $g_\vp$ is non-degenerate
(i.e., $\Sigma$ is non-characteristic). Also, 
the Monge--Amp\`ere
equation on the initial hypersurface can be rewritten as 
\begin{equation}
 \label{SemmesGeodEq}
\ddot \vp|_{s=0}=\frac12|\nabla\dot\vp|_{g_\vp}^2|_{s=0};
\end{equation}
this was shown by Semmes \cite{S} for all $s$, assuming $\vp$ is an $\RR$-invariant solution, 
but holds  
by his argument at $\{s=0\}$ without that assumption since $\del_t\vp,\del^2_t\vp,\del_s\del_t\vp$,
and $\del_t\del_z\vp$ vanish on $\{0\}\times\RR\times M$
as the initial data is $\RR$-invariant. 
Note that \eqref{SemmesGeodEq} expresses the second $s$ derivative of a solution in
terms of the other second derivatives, all restricted to $\Sigma$. 
Since we know $\vp_0$ is a \K potential, it follows that $\vp$ and $\rho$
agree to second order on $\Sigma$. Thus,
$\ker(\pi_2^\star\o+\i\ddbar\vp)|_\Sigma=\ker(\pi_2^\star\o+\i\ddbar\rho)|_\Sigma$ along the
hypresurface. 
Thus, by the uniqueness of solutions of first order ODEs with $C^1$ coefficients,
the leaves of the foliation by strips defined by each of the solutions $\vp,\rho$
must coincide. Thus the maps defined by \eqref{GAMMADEF} and \eqref{ftau} for
$\vp$ and $\rho$ are identical, and we denote them simply by $\Gamma_z(\tau)=f_\tau(z)$.
By the construction of the \MA foliation, on each leaf 
the \K form $\pi_2^\star\o+\i\ddbar\vp$ satisfies
\eqref{LeafwiseDefEq}.
We claim that \eqref{FlowLeavesEqs} always holds for $s=0$. Recall, that
we proved \eqref{FlowLeavesEqs} for all $s\in[0,T]$, but only under the assumption of $\RR$-invariance
of the solution. To prove this claim, note first
$$
\iota_\frac{\del}{\del\tau}(\pi_2^\star\omega+\i\ddbar\vp)\Big|_{s=0}=
\i\dbar\frac{\del\vp}{\del \tau}\Big|_{s=0}=\i \dbar\dot\vp_0, \quad \h{when $s=0$},
$$
since $\dot\vp_0$ is $\RR$-invariant.
Similarly, since $\vp_0$ is $\RR$-invariant,
$$
\iota_{\nabla_{g_{\vp_0}}{\dot\vp_0}}(\pi_2^\star\omega+\i\ddbar\vp)|_{s=0}
=
\iota_{\nabla_{g_{\vp_0}}{\dot\vp_0}}\o_{\vp_0}
=
d^c\dot\vp_0=\i(\dbar-\del)\dot\vp_0.
$$
Thus,
$\frac{\del}{\del\tau}-\nabla^{1,0}_{g_{\vp_0}}{\dot\vp_0}\in\ker
(\pi_2^\star\omega+\i\ddbar\vp)\big|_{(\i t,\Gamma_z(\i t))}$, 
Therefore, since also $\frac{\del}{\del\tau}+\frac{df_\tau}{d\tau}
\in\ker
(\pi_2^\star\omega+\i\ddbar\vp)\big|_{(\i t,\Gamma_z(\i t))}$,
we conclude that
\begin{equation}\label
{FlowLeavesTimeZeroEqs}
\frac{df_\tau}{dt}\Big|_{s=0}=X_{\dot\vp_0}^{\o_{\vp_0}}\circ f_{\i t}=-J\nabla_{g_{\vp_0}}{\dot\vp_0}\circ f_{\i t},\qquad
\frac{df_\tau}{ds}\Big|_{s=0}=-\nabla_{g_{\vp_0}}{\dot\vp_0}\circ f_{\i t},
\end{equation}
as claimed.

Let $\Gamma_z$ be as in 
\eqref{GAMMADEF} and \eqref{FsMapEq} and suppose that $\Gamma_z(S_T)\not=\{z\}$,
i.e., that the leaf passing through $z$ is not trivial.
For each $z\in M$, put $e_z:=\gamma_z^\star \varphi\,$,
$\tilde e_z:=\gamma_z^\star \rho$, and
let $\omega_z:=\gamma_z^\star\pi_2^\star\omega=\Gamma_z^\star\omega$.
First, note that $\omega_z$ is strictly positive $(1,1)$-form on $S_T$.
Indeed, write $\omega_z=\i a_z d\tau\wedge d\bar\tau=2a_z ds\wedge dt$. 
Then by \eqref{FlowLeavesTimeZeroEqs},
\begin{equation}
\label{AFunctionTimeZeroDefEq}
\begin{aligned}
a_z(\i t)
&= -\i
\omega\Big(d\Gamma_z|_{\tau=\i t}\Big(\frac{\del}{\del\tau}\Big), d\Gamma_z|_{\tau=\i t}\Big(\frac{\del}{\del\bar \tau}\Big)\Big)
\Big|_{\Gamma_z(\i t)}
\cr
&=-\i
\omega\Big( \frac{\del f_\tau(z)}{\del\tau}\Big|_{\tau=\i t}, 
\frac{\del f_\tau(z)}{\del\bar \tau}\Big|_{\tau=\i t} \Big)
\Big|_{f_{\i t}(z)} 
\cr
&=-\frac{\i}4
\omega(-\nabla_{g_{\vp_0}}\dot\vp_0+\i J\nabla_{g_{\vp_0}}\dot\vp_0,
-\nabla_{g_{\vp_0}}\dot\vp_0-\i J\nabla_{g_{\vp_0}}\dot\vp_0)|_{f_{\i t}(z)}
\cr
&=
\frac12\omega(\nabla_{g_{\vp_0}}\dot\vp_0, J\nabla_{g_{\vp_0}}\dot\vp_0)|_{f_{\i t}(z)}
=
\frac12|\nabla_{g_{\vp_0}}\dot\vp_0|^2_{g}({f_{\i t}(z)})\ge0.
\end{aligned}
\end{equation}
Since $g_{\vp_0}$ and $g$ are (strictly positive) metrics, $a_z$ vanishes
at some $\i t\in \{0\}\times\RR\subset S_T$ if and only if $d\dot\vp_0(f_{\i t}(z))=d\dot\vp_0(z)=0$
(by \eqref{FlowLeavesTimeZeroEqs} $f_{\i t}=\exp tX_{\dot\vp_0}^{\o_{\vp_0}}$ so in particular 
$f_{\i t}^\star \dot\vp_0=\dot\vp_0$). Thus, if $a_z(\i t)=0$ for some $t$, then $a_z(\i t)$ for all $t\in \RR$.
Now, for fixed $z\in M$ and $t\in\RR$, equation \eqref{FlowLeavesTimeZeroEqs}
is an ODE in $s$ for $f_{s+\i t}(z)$. If $a_z(\i t)=0$, then its initial condition 
is $f_{\i t}(z)=z$ and the initial derivative is zero. Thus, in this case $f_\tau(z)=z$ for all $\tau\in S_T$,
and the leaf through $z$ is trivial, i.e., $\Gamma_z(S_T)=\{z\}$.
Since we assumed at the beginning of this paragraph that the leaf through $z$ was non-trivial, we thus conclude that $a_z|_{s=0}>0$,
and by continuity also $C>a_z|_{s\in[0,3\eps]}>0$, for some $C,\eps>0$.

Denote the Laplacian associated to $\o_z$ by $\Delta_z$.  Then for
each $z$ with a non-trivial leaf,
the leafwise problem \eqref{LeafwiseDefEq} restricted to $S_{2\eps}\times M$ is equivalent to the Cauchy problem,
\begin{equation}
\label{CauchyLaplaceLeaveLocalEq}
\begin{aligned}
1  +  \Delta_z \a_z
& =
0, \quad \h{\ on \ } S_{2\eps},
\cr
\a_z(\i t)
& =  \vp_0(\Gamma_z(\i t)) \quad \h{\ on \ } \{0\}\times\RR,
\cr\dis
\frac{\del\a_z}{\del s}(\i t)
& = 
\dot\vp_0 (\Gamma_z(\i t)
-
d\vp_0(\nabla_{g_{\vp_0}}\dot\vp_0)(\Gamma_z(\i t)), \quad \h{\ on \ } 
\{0\}\times\RR.
\end{aligned}
\end{equation}
The last equation follows from \eqref{FlowLeavesTimeZeroEqs}.
Thus, $e_z$ and $\tilde e_z$ solve \eqref{CauchyLaplaceLeaveLocalEq}.
Hence, since
$\Delta_z = a_z^{-1} \Delta_0$,
 $\zeta_z:=e_z-\tilde e_z$ solves the Cauchy problem
for $\Delta_0\zeta_z=0$ on $S_{2\eps}$ with zero initial data, where
$\Delta_0$ denotes the Euclidean Laplacian on $S_{2\eps}$. 
It is well-known
that bounded solutions to the Cauchy problem on {\it bounded} domains for this classical Euclidean equation are unique (cf., e.g., \cite[p.~19]{L}). However, we could not find a reference 
that treats our particular situation, namely the non-compact strip as in the following Lemma.


\begin{lem}
\label{LaplaceUniquenessLemma}
Let $u\in C^2\cap L^\infty(S_T)$ be a solution of 
$\Delta_0 u=0$ on $S_T$, with $u|_{s=0}=a\in C^2(\RR)$,
and $\del u/\del s|_{s=0}=b\in C^2(\RR)$. Then $u$ 
is unique.
\end{lem}

\begin{proof}
Since the equation is linear it suffices to consider
the case of zero Cauchy data $a=b=0$, and prove any solution must then vanish.
Also, it suffices to consider the case $T=\pi$, since if $u$ is a non-trivial
solution of $\Delta_0u=0$ on $S_T$ with $a=b=0$ then 
$v(s,t):=u(\frac T\pi s,\frac T\pi t)$ solves the same equation on $S_\pi$.

Let $P$ denote the Dirichlet  Poisson kernel of the strip $S_\pi$,
\begin{equation}
\label{PoissonStripEq}
P(s, t) 
= \frac{\sin s}{\cosh t - \cos s}. 
\end{equation}
According to a theorem of Widder \cite[Theorem 4]{W1}, any harmonic function
bounded below on the strip $S_\pi$ can be expressed as 
\begin{equation}
\label{PoissonStEq}
u(s,t)-\inf u=[Ae^t+Be^{-t}]\sin s+\frac1{2\pi}\int_\RR P(s,a-t)d\alpha(a)
+\frac1{2\pi}\int_\RR P(\pi-s,a-t)d\beta(a),
\end{equation}
for some constants $A,B\ge0$, and some (measurable) nondecreasing functions
$\alpha,\beta:\RR\ra\RR$. Moreover, the integrals converge in the interior
of $S_\pi$. 
Evaluating at $s=0$ gives, by the continuity of $u$
$$
-\inf u =\frac1{2\pi}\lim_{s\ra0^+}\int_\RR P(s,a-t)d\alpha(a).
$$
Thus $-\inf u\, dt=d\alpha(t)$. Plugging this back into \eqref{PoissonStEq},
thus
\begin{equation}
\label{PoissonStSecondEq}
u(s,t)=[Ae^t+Be^{-t}]\sin s+\frac1{2\pi}\int_\RR P(\pi-s,a-t)d\beta(a).
\end{equation}
Therefore,
$$
0=\frac{\del u}{\del s}(0,t)=Ae^t+Be^{-t}
+
\frac1{2\pi}\int_\RR\frac{d\beta(a)}{\cosh(a-t)+1}
$$
Since each of the terms is nonnegative they all vanish. Hence, $A=B=0$,
and $\frac1{2\pi}\int_\RR\frac{d\beta(a)}{\cosh(a-t)+1}=0,$
and therefore $d\beta=0$. Plugging back into \eqref{PoissonStEq},
we conclude that $u=0$, as desired.
\end{proof}


It follows that $e_z=\tilde e_z$, whenever $\Gamma_z(S_T)\not=\{z\}$.
On the other hand, if $\Gamma_z(S_T)=\{z\}$ then $\vp(\tau,z)=\rho(\tau,z)$
by using \eqref{FtauPullbackEq}, \eqref{SemmesGeodEq} and 
that $\vp(\i t,z)=\rho(\i t,z)$.
Since the foliation foliates all of $S_{2\eps}\times M$,
it follows that $\vp=\rho$ on that set. Thus, we have short-time uniqueness for
$C^3$ solutions of the HCMA \eqref{HCMARayEq}. 

In particular, it follows that
both $\vp$ and $\rho$ are $\RR$-invariant for $s\in[0,\eps]$. 
Also, \eqref{MoserDiffDecompositionEq}--\eqref{h} hold since again they were
derived assuming only $\RR$-invariance.
Thus, \eqref{SemmesGeodEq} extends to a strip:
\begin{equation}
 \label{SemmesGeodGlobalEq}
\ddot \vp=\frac12|\nabla\dot\vp|_{g_\vp}^2, 
\end{equation}
on $S_{\eps}\times M$.  Consequently \cite{S,D1}, 
\begin{equation}
\label
{ConservationLawHCMAEq}
\dot\vp_s\circ f_s=\dot\vp_0.
\end{equation} 
Indeed, this holds when $s=0$, and differentiating in $s$ and using
\eqref{FlowLeavesEqs}, \eqref{MoserDiffDecompositionEq}, \eqref{h}, 
and \eqref{SemmesGeodGlobalEq} we obtain it must holds for all $s\in[0,\eps]$,
where we used that $\dot\vp_s$ is constant along its Hamilton orbits
(the factor of $1/2$ in \eqref{SemmesGeodGlobalEq} can be traced to
our normalizations and corresponds to switching between the Hermitian
and the Riemannian metrics associated to $\o_{\vp}$, cf. \cite[\S2.1.4.1,\S2.2.3]{R}).
Finally, we can now also apply \eqref{FlowLeavesEqs} which was valid for any $\RR$-invariant solution,
and compute
\begin{equation}
\label{AFunctionDefEq}
\begin{aligned}
a_z(s+\i t)
&= -\i
\omega\Big(d\Gamma_z\Big(\frac{\del}{\del\tau}\Big), d\Gamma_z\Big(\frac{\del}{\del\bar \tau}\Big)\Big)
\Big|_{\Gamma_z(\tau)}
\cr
&=-\i
\omega\Big( \frac{\del f_\tau(z)}{\del\tau}, \frac{\del f_\tau(z)}{\del\bar \tau} \Big)
\Big|_{f_\tau(z)} 
\cr
&=-
\frac\i4\omega(-\nabla_{g_{\vp_s}}\dot\vp_s+\!\i J\nabla_{g_{\vp_s}}\dot\vp_s,
-\nabla_{g_{\vp_s}}\dot\vp_s-\!\i J\nabla_{g_{\vp_s}}\dot\vp_s)|_{f_{\tau}(z)}
\cr
&=
\frac12\omega(\nabla_{g_{\vp_s}}\dot\vp_s, J\nabla_{g_{\vp_s}}\dot\vp_s)|_{f_{\tau}(z)}
=
\frac12|\nabla_{g_{\vp_s}}\dot\vp_s|^2_{g}({f_{\tau}(z)}).
\end{aligned}
\end{equation}
Therefore, by \eqref{FtauPullbackEq}, \eqref{ConservationLawHCMAEq}, and compactness it follows
that if $0<a|_{s=0}<$ then there exists constants $c,C>0$ determined
by $z$ and the Cauchy data such that the a priori estimate $c<a_z(\tau)<C$ holds for each $\tau\in S_T$
for which a solution exists.
Thus, we can now repeat the argument for the Cauchy
problem with $\RR$-invariant initial data given by $\vp|_{\{\eps\}\times\RR}=\rho|_{\{\eps\}\times\RR}$
and $\dot\vp|_{\{\eps\}\times\RR}=\dot\rho|_{\{\eps\}\times\RR}$, and conclude that in fact
\eqref{CauchyLaplaceLeaveLocalEq} must hold on $S_T$. Thus $\rho=\vp$.
This concludes the proof of Lemma \ref{UniquenessGivenTGoodInvertibleLemma}.
\end{proof}
 
Theorem \ref{HCMACauchyCthreeThm} now follows by combining 
\S\ref{HCMAInvertibilityMoserSubsection},
and Lemmas \ref{ExistenceGivenTGoodInvertibleLemma}
and \ref{UniquenessGivenTGoodInvertibleLemma}.

As can be seen from the proof, $\vp$ is a smooth solution of the IVP \eqref{HCMARayEq}
if and only the Moser maps $f_s$ are smoothly invertible and 
the `conservation law' \eqref{ConservationLawHCMAEq} holds. Of course, this is
a weaker statement than Lemma \ref{ExistenceGivenTGoodInvertibleLemma}. Nevertheless
we record it here.

\begin{cor}
Let $\o_{\vp_0}\in C^1$ and $\dot\vp_0\in C^3$.
Assume that the Cauchy problem for $(\o_{\vp_0},\dot\vp_0)$
is $T$-good, and that for each $\tau\in S_T$ the map $f_\tau$ given by
(\ref{FsMapEq}) is smoothly invertible. Then 
\eqref{SemmesGeodGlobalEq} and \eqref{ConservationLawHCMAEq}
are equivalent.

\end{cor}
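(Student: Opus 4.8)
The plan is to show that, for the $\RR$-invariant $C^3$ $\pi_2^\star\o$-psh potential $\vp$ realizing the (smoothly invertible) Moser maps $f_\tau$ in the sense of \eqref{FtauPullbackEq} — equivalently, the function produced by \eqref{ExplicitGreenFunctionSolEq} in Lemma \ref{ExistenceGivenTGoodInvertibleLemma} — each of \eqref{SemmesGeodGlobalEq} and \eqref{ConservationLawHCMAEq} is equivalent to the single assertion that $s\mapsto\dot\vp_s\circ f_s$ be constant on $M$; since $f_0=\id$ this constant is necessarily $\dot\vp_0$, which is exactly why that assertion is \eqref{ConservationLawHCMAEq}. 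Note first that, because $\vp$ satisfies \eqref{FtauPullbackEq}, the flow identities \eqref{FlowLeavesEqs} are in force: their derivation in \S\ref{HCMAInvertibilityMoserSubsection} and in the proof of Lemma \ref{ExistenceGivenTGoodInvertibleLemma} uses only $f_\tau^\star\o_{\vp_\tau}=\o_{\vp_0}$, $\RR$-invariance, and the $\ddbar$-lemma, and does not presuppose the global HCMA. The implication \eqref{SemmesGeodGlobalEq}$\Rightarrow$\eqref{ConservationLawHCMAEq} is already the one carried out at the end of the proof of Lemma \ref{UniquenessGivenTGoodInvertibleLemma}; I treat both directions together.

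The key computation is to fix $z\in M$ and differentiate $s\mapsto\dot\vp_s(f_s(z))$ by the chain rule, then substitute the identity $\tfrac{df_s}{ds}=-\nabla_{g_{\vp_s}}\dot\vp_s\circ f_s$ from \eqref{FlowLeavesEqs}; this gives
\begin{equation*}
\frac{d}{ds}\bigl(\dot\vp_s\circ f_s\bigr)=\Bigl(\ddot\vp_s-\bigl|\nabla_{g_{\vp_s}}\dot\vp_s\bigr|^2_{g_{\vp_s}}\Bigr)\circ f_s .
\end{equation*}
In the Riemannian normalization used in \eqref{FlowLeavesEqs}, the parenthesized factor is precisely the geodesic operator of \eqref{SemmesGeodGlobalEq} evaluated at $\vp$ — the apparent factor $1/2$ in \eqref{SemmesGeodGlobalEq} being the Hermitian-versus-Riemannian discrepancy flagged in the remark following \eqref{ConservationLawHCMAEq}.

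Granting this, both implications drop out. If \eqref{SemmesGeodGlobalEq} holds on $M$ then the right-hand side above vanishes identically, so $\dot\vp_s\circ f_s$ is independent of $s$, hence equals $\dot\vp_0$, i.e.\ \eqref{ConservationLawHCMAEq}. Conversely, if \eqref{ConservationLawHCMAEq} holds then $\dot\vp_s\circ f_s\equiv\dot\vp_0$ is $s$-independent, so the left-hand side above vanishes; here the smooth-invertibility hypothesis plays its one essential role — $f_s$ is a diffeomorphism of $M$, so $f_s(M)=M$ — and composing with $f_s^{-1}$ yields $\ddot\vp_s=\bigl|\nabla_{g_{\vp_s}}\dot\vp_s\bigr|^2_{g_{\vp_s}}$ on all of $M$, which is \eqref{SemmesGeodGlobalEq}.

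The only point calling for care is the claim, used throughout, that \eqref{FlowLeavesEqs} is available for the potential at hand without circularly invoking the global HCMA — but this is exactly the mechanism of Lemma \ref{ExistenceGivenTGoodInvertibleLemma}, where \eqref{FtauPullbackEq} (hence \eqref{FlowLeavesEqs}) is obtained for the function \eqref{ExplicitGreenFunctionSolEq} purely from the $\ddbar$-lemma. For an arbitrary $\RR$-invariant $\pi_2^\star\o$-psh $\vp$ carrying the prescribed Cauchy data the statement reduces to this case: if \eqref{SemmesGeodGlobalEq} holds, then Semmes' reduction (cf.\ the discussion around \eqref{SemmesGeodEq}), valid since $\vp$ is $\RR$-invariant, makes $\vp$ a solution of \eqref{HCMARayEq}, so \eqref{FtauPullbackEq} follows from \S\ref{HCMAInvertibilityMoserSubsection}; and if \eqref{ConservationLawHCMAEq} holds, then $f_s$ being onto forces $\dot\vp_s=\dot\vp_0\circ f_s^{-1}$, which coincides with the same object for the solution of Lemma \ref{ExistenceGivenTGoodInvertibleLemma}, so $\vp$ agrees with that solution after integrating in $s$ (the Cauchy data being common), and again \eqref{FtauPullbackEq} holds. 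Everything else is the chain rule and the surjectivity of the Moser maps, so I do not expect a serious obstacle.
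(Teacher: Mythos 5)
Your proof is correct and takes essentially the same route as the paper: both directions come from differentiating $\dot\vp_s\circ f_s$ in $s$ via the flow identities \eqref{FlowLeavesEqs}, which are supplied non-circularly by Lemma \ref{ExistenceGivenTGoodInvertibleLemma}, with the smooth invertibility of the Moser maps used to pass back from the conservation law \eqref{ConservationLawHCMAEq} to the geodesic equation \eqref{SemmesGeodGlobalEq} (and the factor $\frac12$ handled, as in the paper, as a Hermitian-versus-Riemannian normalization). Your additional care about which potential the flow identities apply to is exactly what the paper's appeal to Lemma \ref{ExistenceGivenTGoodInvertibleLemma} accomplishes.
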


We already saw that \eqref{SemmesGeodGlobalEq} implies \eqref{ConservationLawHCMAEq}.
For the converse, note that under the assumptions, it follows from
Lemma \ref{ExistenceGivenTGoodInvertibleLemma} that there exists a solution,
and that the Moser maps determined by the Cauchy data satisfy
\eqref{FlowLeavesEqs}--\eqref{h}; thus differentiating \eqref{ConservationLawHCMAEq}
immediately gives \eqref{SemmesGeodGlobalEq}.

In the setting of the HRMA, we will interpret \eqref{ConservationLawHCMAEq} in terms of a 
Hamilton--Jacobi equation (Theorem \ref{HJThm})
and show that this `conservation law' persists also
for certain weak solutions (Proposition \ref{FTPHIDOT}).

\section{Ill-posedness of leafwise Cauchy problems }
\label
{LeafwiseSection} 

The goal of this section is to prove Theorem \ref{GENERIC}, showing
that the Cauchy problem for the HCMA is not even locally well-posed.
As the proof of Theorem \ref{HCMACauchyCthreeThm} shows, the leaves of the Monge--Amp\`ere foliation 
are obtained as the analytic continuation of the Hamiltonian flow
of $(\omega_{\vp_0},\dot\vp_0)$. The Monge--Amp\`ere distribution
picks out as the $M$-component the Hamiltonian vector field associated to 
$(\omega_{\vp_0},\dot\vp_0)$ and not an arbitrary multiple of it precisely 
because the $S_T$-component of the distribution is $\partial/\partial\tau$.
In other words, the leaves (strips) of the foliation are graphs (of
maps $S_T\ra S_T\times M$) of (complex) time-parametrized Hamiltonian 
flow of $(\omega_{\vp_0},\dot\vp_0)$. 
As we will show, 
this puts a serious restriction on the Cauchy data.

So far, we have operated under the assumption that we have  
$T$-good Cauchy data (Definitions \ref{THamAnalyticDef} and \ref{TGoodDef}). 
Yet the analytic continuation of  each  Hamiltonian orbit should be an 
ill-posed problem. 
The closely related problem of solving the leafwise  Cauchy problem for the
equation \eqref{LeafwiseDefEq} should also ill-posed, and the goal
of this section is to give a proof of this latter ill-posedness.
The latter problem seems simpler than the 
former since it is a linear problem for a function on a strip rather than
a Cauchy problem for a holomorphic map into a nonlinear space.  
Hence we concentrate on the leafwise problem here. However, it is
natural also to linearize the nonlinear problem (cf. \cite{D2}) 
and prove ill-posedness for the existence of $T$-Hamiltonian analytic
data. We pursue this approach in a sequel.

As above, we  suppose that we are given $(\omega_{\vp_0}, \dot{\vp}_0)$  
for which  the orbit $\exp t X_{\dot{\vp}_0}^{\omega_{\vp_0}}$ admits an analytic 
continuation to the strip $S_T$.
Let $\gamma_z$ be as in  Definition \ref{OptimalSubsolutionDef}.
Then  $\a_z = \gamma_z^\star \varphi$ satisfies \eqref{CauchyLaplaceLeaveLocalEq}.
Since $\Gamma_z^\star \omega$ has a global potential $\Phi_z$
on $S_T$, we may also write the equation in terms of the Euclidean Laplacian $\Delta_0$  as
\begin{equation} \label
{DELTAZERO} 
\Delta_0  \chi  = 0, \;\;\; \text{\rm where}\;\;\chi = (\Phi_z + \a_z). 
\end{equation} 
However,
$\Phi_z$ is not unique since the addition of any harmonic function on the strip gives another potential. 
 In the case where the image of the complex Hamiltonian orbit $\Gamma_z$ \eqref{GAMMADEF}
lies in an open set $U \subset M$ in which $\omega$ has a potential $\Phi_0$, we have
$\Phi_z = \Gamma_z^\star \Phi_0$. 
In general, the closure of the image of $\Gamma_z(\i t)$ lies in the level set 
$\{\dot{\vp}_0 = \dot{\vp}_0(z)\}.$
 We will see that toric varieties always satisfy these conditions. 
However, simple examples (e.g., elliptic curves) show that there need not exist
a potential for $\omega$ defined in a neighborhood of the orbit. 
The following lemma shows that one may find a reasonable replacement for
that, on each leaf separately. 
The growth estimate we derive here is not optimal, but suffices for our purposes.

\begin{lem}
\label
{PolyGrowthLemma}  
Let $\vp$ be a smooth solution to the HCMA \eqref{HCMARayEq}.
There exists a global \K potential $\Phi_z$ for $\Gamma_z^\star\o$ on $S_T$
with polynomial growth at infinity.
\end{lem}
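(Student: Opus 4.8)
The plan is to produce the global potential $\Phi_z$ on $S_T$ by combining a local potential along the leaf with a correction term coming from the Green's operator, and to control the growth by tracking the growth of the Moser flow itself. First I would observe that, since $\vp$ is a smooth solution of the HCMA, Theorem~\ref{HCMACauchyCthreeThm} (more precisely the constructions of \S\ref{HCMAInvertibilityMoserSubsection}) applies: the leaf through $z$ is $\{(\tau,\Gamma_z(\tau)):\tau\in S_T\}$ with $\Gamma_z(\tau)=f_\tau(z)$, and the $\RR$-invariance established there gives $\Gamma_z(s+\i t)$ and $\Gamma_z(s)$ differing by the Hamiltonian flow $\exp tX_{\dot\vp_s}^{\o_{\vp_s}}$. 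In particular $\Gamma_z^\star\o$ is a strictly positive (when the leaf is nontrivial), $\RR$-invariant $(1,1)$-form on $S_T$, so it may be written as $\frac{\i}{2}a_z(s)\,d\tau\wedge d\bar\tau$ with $a_z(s)=\half|\nabla_{g_{\vp_s}}\dot\vp_s|_g^2(f_\tau(z))$ by \eqref{AFunctionDefEq}, a function of $s$ alone. Hence a potential is obtained simply by integrating twice in $s$: set $\Phi_z(s):=\int_0^s\!\!\int_0^{s'}a_z(s'')\,ds''\,ds'$, which is manifestly $\RR$-invariant and satisfies $\i\ddbar\Phi_z=\Gamma_z^\star\o$ on $S_T$ since $\Delta_0\Phi_z=2a_z$ up to the normalization constant.

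The remaining issue is the growth estimate at infinity, i.e. as $s\to T$ and (trivially, by $\RR$-invariance) uniformly in $t$. Since $a_z(s)=\half|\nabla_{g_{\vp_s}}\dot\vp_s|_g^2(f_\tau(z))$, I would bound $a_z$ by estimating $\dot\vp_s$ and the metric $g_{\vp_s}$ along the orbit. By the conservation law \eqref{ConservationLawHCMAEq}, $\dot\vp_s\circ f_s=\dot\vp_0$, so $|\dot\vp_s|$ stays bounded; the gradient norm $|\nabla_{g_{\vp_s}}\dot\vp_s|_g$ is controlled as long as the metrics $g_{\vp_s}$ do not degenerate. This is exactly the a~priori estimate $c<a_z(\tau)<C$ noted at the end of \S\ref{UniquenessHCMASubsection}: compactness of $M$ together with $f_\tau^\star\o_{\vp_\tau}=\o_{\vp_0}$ forces two-sided bounds on $a_z$ on any compact sub-strip, and indeed on all of $S_T$ provided the solution exists up to time $T$. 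With $a_z$ bounded on $[0,T]$ (or at worst growing controllably if one only assumes boundedness of $\dot\vp_0$ rather than smoothness up to the boundary), the double integral defining $\Phi_z$ is bounded on $S_T$, which is better than the claimed polynomial growth; the looser ``polynomial growth'' statement is what is actually needed downstream (for applying Widder-type representation theorems as in Lemma~\ref{LaplaceUniquenessLemma}) and is comfortably implied.

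I would then record the conclusion: $\Phi_z$ is a smooth, $\RR$-invariant, globally defined \K potential for $\Gamma_z^\star\o$ on $S_T$ with (at most polynomial, in fact bounded) growth. As a sanity check I would verify consistency with the two special cases flagged in the text: when the orbit stays in an open set $U$ where $\o=\i\ddbar\Phi_0$, the pullback $\Gamma_z^\star\Phi_0$ differs from my $\Phi_z$ by a harmonic function on the strip, as it must; and when the leaf is trivial, $a_z\equiv0$ and $\Phi_z\equiv0$. The main obstacle is not the construction — which is elementary once $\RR$-invariance reduces everything to a function of $s$ — but the growth control, i.e. ensuring $a_z$ does not blow up as $s\to T$; this is handled by invoking the non-degeneracy a~priori estimate already in place from the uniqueness argument (via \eqref{FtauPullbackEq}, \eqref{ConservationLawHCMAEq}, and compactness of $M$), so no new analytic input is required.
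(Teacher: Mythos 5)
Your construction stands or falls on the claim that $a_z$ is a function of $s$ alone, i.e.\ that $\Gamma_z^\star\o$ is invariant under the $\RR$-action $\tau\mapsto\tau+\i c$, and this claim is false in general. By \eqref{MoserDiffDecompositionEq}--\eqref{h}, translating by $\i c$ on the strip moves the point $\Gamma_z(s+\i t)$ along the Hamiltonian flow $\exp cX_{\dot\vp_s}^{\o_{\vp_s}}$ on $M$. That flow preserves $\o_{\vp_s}$ and $\dot\vp_s$, but it does not preserve the reference form $\o$, nor $J$, nor the metrics $g$ or $g_{\vp_s}$. Since by \eqref{AFunctionDefEq} one has $a_z(s+\i t)=\half|\nabla_{g_{\vp_s}}\dot\vp_s|^2_{g}(f_{s+\i t}(z))$, and $|\nabla_{g_{\vp_s}}\dot\vp_s|^2_{g}$ is not a conserved quantity of the Hamiltonian flow of $\dot\vp_s$ for generic data, the function $a_z$ genuinely depends on $t$. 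The $\RR$-invariance established in Section \ref{HamFlowsHCMASection} only says $\vp_\tau=\vp_s$; it does not make the pullback of $\o$ along the leaf $t$-independent. (It does in the torus-invariant setting of the later sections, which may be the source of the slip, but the lemma is needed for general $(M,\o)$ and all $z$.) Consequently $\Phi_z(s):=\int_0^s\!\int_0^{s'}a_z\,ds''ds'$ does not solve $\Delta_0\Phi_z=2a_z$, and your proof produces no potential; relatedly, ``growth at infinity'' on $S_T$ means growth as $|t|\to\infty$, which cannot be dismissed by invariance.

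What you do get right is the two-sided bound $0<c<a_z<C$ on $S_T$ via \eqref{FtauPullbackEq}, \eqref{ConservationLawHCMAEq} and compactness; this is also the paper's starting point. But boundedness of the right-hand side on the unbounded strip does not by itself yield a solution of the Poisson equation with controlled growth: that is the actual content of the lemma. The paper solves $\dbar(\i\del\Phi_z)=-2a_z\,ds\wedge dt$ in two steps using H\"ormander's weighted $L^2$ existence theorem for $\dbar$ (with weights $\log(1+|\tau|^2)$ and $3\log(1+|\tau|^2)$), applies the same argument to derivatives of $\Phi_z$, and then converts the weighted $L^2$ bounds into the pointwise estimate $|\Phi_z|\le C(1+|\tau|^2)^{5/2}$ by a Sobolev inequality on the strip. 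Some analytic input of this kind (or a fundamental-solution construction with growth control) is unavoidable once the $t$-dependence of $a_z$ is acknowledged; double integration in $s$ cannot replace it. Incidentally, if one only wanted the literal statement, the leafwise vanishing of $\pi_2^\star\o+\i\ddbar\vp$ shows that $-\gamma_z^\star\vp$ is already a bounded potential for $\Gamma_z^\star\o$; but that choice makes the obstruction of Proposition \ref{LeafwiseProp} carry no information, which is exactly why the paper builds $\Phi_z$ from $\Gamma_z^\star\o$ alone.
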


\begin{proof}  
The claim would
be obvious if there exists a potential for $\omega$ on a neighborhood of the image of $\Gamma_z(S_T)$,
but as mentioned earlier such a potential need not exist. 
Instead, we will find suitable \K potentials along each leaf.

As before, denote
$\o_z=\gamma_z^\star \pi_2^\star \omega = 2a_z ds \wedge dt.$
As shown in the proof of Lemma \ref{ExistenceGivenTGoodInvertibleLemma}, $a_z>0$
if and only if the leaf through $z$ is non-trivial, i.e., $\Gamma_z(S_T)\not=\{z\}$, which
we assume throughout this section. 
Thus, by compactness, there exist
some constants $c,C>0$ (depending on $z$) 
such that $0 <c < a_z(s, t) < C$ on $S_T$.
For convenience, in this section we omit the subscript and denote $a\equiv a_z$.

 We wish
to find $\Phi_z\in C^\infty(S_T)$ of polynomial growth so that $\Delta_0 \Phi_z = a$,
i.e., $\i\ddbar\Phi_z = \gamma_z^\star \pi_2^\star \omega$. Throughout this proof 
$\del=\del_\tau$.

We rewrite the Poisson equation above as
\begin{equation} \label{PE} \dbar (\i \partial \Phi_z) = -2a ds \wedge dt \end{equation}
and  use existence theorems for the inhomogeneous $\dbar$-equation on the strip.  
Introduce the subharmonic weight $\psi = \log (1 + |\tau|^2)$ and observe that 
$$
a ds \wedge dt \in L^2_{(1,1)}(S_T, \psi ) 
$$
where $L^2_{(1,1)}(S_T, \psi)$ is the space of $(1,1)$ forms $a(s, t) ds \wedge dt$  so that
$$\int_{S_T} e^{- \psi}  |a|^2 ds \wedge dt < \infty. $$
By H\"ormander's weighted $L^2$ existence theorem  for the $\dbar$-equation 
\cite[Theorem 4.4.2]{Ho1}, there exists $u \in L^2_{(0, 1)}(S_T)$ such that
$\dbar u = -2a ds \wedge dt$ and 
$$\int_{S_T} |u|^2 (1 + |\tau|^2)^{-3} d s \wedge dt \leq 4\int_{S_T} |a|^2 e^{- \psi} ds \wedge dt. $$

Applying the same theorem to $\dbar \Phi_z= \bar{u}$
with $\psi = 3 \log (1 + |\tau|^2)$, we
then obtain a solution $\Phi_z$ of $\i \ddbar \Phi = 2a ds \wedge dt$ satisfying
\begin{equation} \label{5} 
\int_{S_T} |\Phi_z|^2 (1 + |\tau|^2)^{-5} ds dt < \infty. 
\end{equation}

We now show that this $L^2$ estimate implies the polynomial growth of $\Phi_z$.
Note that $\partial_s \Phi_z$  and  $\partial_t \Phi_z$  
satisfy a Poisson equation on $S_T$ satisfying the same estimates. 
Indeed, by \eqref{FlowLeavesEqs} (and the assumption of existence of
a smooth solution) under $\Gamma_z$ these 
vector fields
push-forward to the Hamilton vector fields for $\dot{\vp}_s$, respectively $J$ of these fields. 
Hence the Lie derivative
with respect to these fields of $\omega$ are bounded and we can use them as the right hand side in place
of $a ds \wedge dt$ above and repeat the argument to get the estimate \eqref{5} for
these derivatives and  for repeated mixed derivatives. 

By the  Sobolev inequality  
$\sup_{S_T} f^2 \leq C \int_{S_T}  |(1 - \Delta_0) f|^2 ds dt $ for a strip,
we have 
$$
\sup_{S_T} \Phi_z^2 (1 + |\tau|^2)^{-5} 
\leq 
C \int_{S_T} |(1 - \Delta_0)(\Phi_z (1 + |\tau|^2)^{-5/2}) |^2 ds \wedge dt. 
$$
It is straightforward to check that the integral is finite:
this follows from the weighted $L^2$ estimates for $\Phi_z$
and $\Delta_0\Phi_z$, and the fact that derivatives 
of $(1 + |\tau|^2)^{-r} $ 
for $r > 0 $ decay more rapidly with each derivative. 
It follows that 
\begin{equation} \label{EST}
|\Phi_z| \leq C  (1 + |\tau|^2)^{5/2}  \;\; \text{\rm on}\;\; S_T. 
\end{equation}
\end{proof}

\begin{remark}
\rm
In the proof of Theorem \ref{GENERIC} we will be able to specialize to
a situation where $\Phi_z$ is actually of the form $\Gamma_z^\star\Phi_0$.
However, Lemma \ref{PolyGrowthLemma} is needed to derive the general 
obstruction in Proposition \ref{LeafwiseProp} below that holds for all
$z\in M$.
\end{remark}

The obstruction to solvability of \eqref{CauchyLaplaceLeaveLocalEq},
and hence to the existence of a leafwise subsolution (and in particular
to the existence of a $C^3$ solution of the HCMA), is summarized
in the following propostion.

\begin{prop} \label
{LeafwiseProp}  
Let $\vp$ be a 
$C^3$ solution to the HCMA \eqref{HCMARayEq}, and $z\in M$.
Let $D = \frac{1}{\i} \frac{d}{dt}$ on $\R$, and set
$$
\begin{aligned}
q_z(t) 
&: = 
\frac{\partial \a_z}{\partial s}(\i t)
=
\dot\vp_0 (\Gamma_z(\i t)
-
d\vp_0(\nabla_{g_{\vp_0}}\dot\vp_0)(\Gamma_z(\i t)),
\cr
p_z(t)
&: = 
-\del_s\Phi_z(0,t)
-
D\coth TD (\Phi_z+\gamma_z^\star\vp_0)(0,t),
\end{aligned}
$$
where $\Phi_z$ is given by Lemma \ref{PolyGrowthLemma}.
Then,
$$
\widehat{(q_z-p_z)}(\xi)=o(e^{-T|\xi|}).
$$
Thus, $q_z-p_z$ admits
an analytic continuation to the interior of $S_T\cup \overline{S_T}
=[-T,T]\times\RR$.

\end{prop}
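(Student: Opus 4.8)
The plan is to reduce the claim to a statement about the Dirichlet-to-Neumann operator on the strip $S_T$ and then read off the exponential decay of $\widehat{(q_z-p_z)}$ from the explicit symbol of that operator. First I would recall from \eqref{DELTAZERO} that $\chi:=\Phi_z+\a_z$ is $\Delta_0$-harmonic on $S_T$, and from Lemma \ref{PolyGrowthLemma} (applied to $\Phi_z$ and, since $\a_z=\gamma_z^\star\vp$ with $\vp\in C^3$ compact-manifold data, to $\a_z$ as well) that $\chi$ has polynomial growth in $t$. The Cauchy data of $\chi$ on $\{s=0\}$ is $\chi(0,t)=(\Phi_z+\gamma_z^\star\vp_0)(0,t)$ and $\del_s\chi(0,t)=\del_s\Phi_z(0,t)+q_z(t)$, the latter using the third line of \eqref{CauchyLaplaceLeaveLocalEq}. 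The point of the combination defining $p_z$ is precisely that $-p_z=\del_s\Phi_z(0,t)+(D\coth TD)\,\chi(0,t)$, so that $q_z-p_z=\del_s\chi(0,t)-(-D\coth TD)\,\chi(0,t)$; in other words $q_z-p_z$ measures the failure of the Cauchy data of $\chi$ to be compatible with the operator $D\coth TD$.

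The key step is the following normal-form computation. A bounded-below (or polynomially bounded) harmonic function $\chi$ on $S_T$ that is harmonic up to $s=T$ as well — which is the case here, since the strip $S_T$ on which the Moser leaf is defined can be taken slightly larger, or one simply works with the representation on $S_T$ directly — is determined by its boundary values $\chi(0,\cdot)$ and $\chi(T,\cdot)$ via the Poisson kernel \eqref{PoissonStripEq}. Taking the Fourier transform in $t$, a harmonic function on $S_T$ is a combination $A(\xi)\cosh(s\xi')+B(\xi)\sinh(s\xi')$ in the obvious sense, and the Neumann datum at $s=0$ is related to the Dirichlet data at the two walls by $\widehat{\del_s\chi}(0,\xi)=|\xi|\big(\coth(T|\xi|)\widehat\chi(0,\xi)-\operatorname{csch}(T|\xi|)\widehat\chi(T,\xi)\big)$. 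Hence
$$
\widehat{(q_z-p_z)}(\xi)=\widehat{\del_s\chi}(0,\xi)-|\xi|\coth(T|\xi|)\widehat\chi(0,\xi)=-|\xi|\operatorname{csch}(T|\xi|)\,\widehat\chi(T,\xi).
$$
Since $\chi(T,\cdot)$ has at most polynomial growth, $\widehat\chi(T,\cdot)$ is a tempered distribution, and $|\xi|\operatorname{csch}(T|\xi|)=O(|\xi|e^{-T|\xi|})$ as $|\xi|\to\infty$; pairing the tempered distribution against this rapidly decaying symbol gives $\widehat{(q_z-p_z)}(\xi)=o(e^{-T|\xi|})$ — indeed $O(|\xi|e^{-T|\xi|})$ — which is the asserted estimate. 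The final sentence of the proposition is then the standard Paley--Wiener--Schwartz statement: a (tempered) function whose Fourier transform is $o(e^{-T|\xi|})$ extends holomorphically to the strip $\{|\Im t|<T\}$, i.e.\ to the interior of $[-T,T]\times\RR$.

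The main obstacle I expect is justifying the Widder-type boundary representation \eqref{PoissonStEq}–\eqref{PoissonStSecondEq} in the present non-compact, merely polynomially bounded setting, and in particular controlling the two ``edge'' terms $[Ae^t+Be^{-t}]\sin s$ that appear there: one must argue, as in the proof of Lemma \ref{LaplaceUniquenessLemma}, that the polynomial growth of $\chi$ (rather than boundedness) still forces a usable representation, or alternatively absorb these terms and the contribution of the far walls into the error without spoiling the $o(e^{-T|\xi|})$ bound. A secondary technical point is the interchange of Fourier transform and the harmonic extension: since $\chi$, $\del_s\chi$ are only polynomially bounded, all Fourier transforms must be read in the tempered-distribution sense, and the identity for $\widehat{\del_s\chi}(0,\xi)$ must be established by testing against Schwartz functions rather than pointwise — this is where the polynomial growth estimate from Lemma \ref{PolyGrowthLemma} for $\Phi_z$ and its derivatives (already noted in that proof to propagate to all derivatives) is essential.
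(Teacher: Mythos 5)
Your proposal follows essentially the same route as the paper: write $\chi=\Phi_z+\a_z$, represent the polynomially bounded harmonic function $\chi$ on the strip by its two boundary values (the paper uses Widder's Poisson-integral theorem, interpreted in the tempered-distribution sense), identify $q_z-p_z$ with $\frac{D}{\sinh TD}\chi(T,\cdot)$, and conclude the $o(e^{-T|\xi|})$ decay and the Paley--Wiener continuation. The only blemish is a pair of compensating sign slips (your Dirichlet-to-Neumann formula should read $\widehat{\del_s\chi}(0,\xi)=-|\xi|\coth(T|\xi|)\widehat\chi(0,\xi)+|\xi|\operatorname{csch}(T|\xi|)\widehat\chi(T,\xi)$, and $\widehat{(q_z-p_z)}=\widehat{\del_s\chi}(0,\xi)+|\xi|\coth(T|\xi|)\widehat\chi(0,\xi)$), which leave the decisive estimate, and hence the conclusion, unaffected.
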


Henceforth we denote by $PW_T(\R)$ the Paley--Wiener space
\begin{equation} \label{PW} 
PW_T(\R):= 
\{f \in L^2(\R)\,: \, |\hat{f}(\xi) | =o(e^{-T|\xi|}) \}. 
\end{equation}

Our convention for the Fourier transform is
$$
\calF(f)(\xi)\equiv\hat f(\xi):=\int_\RR e^{-\i t\xi}f(t)dt.
$$
It is well-known  that if $f \in PW_T(\R)$, then $f$ is the
restriction to $\R$ of a holomorphic function on any two-sided strip 
$S_b\cup\overline{S_b}=[-b,b]\times\RR$ with $b < T$   
\cite[p. 121]{St}.

\begin{proof}
Despite the lack of uniqueness of $\Phi_z$ it seems simpler to work with the equation
\eqref{DELTAZERO} rather than $\Delta_z \a = -1$ since the  Euclidean equation 
is simpler and it too has real analytic coefficients.  We then wish to represent the solution 
$\chi_z$ as a 
Poisson integral in terms of
its boundary values on $\partial S_{T}$.  
We first assume $T = \pi$. We recall the following theorem of Widder \cite[Theorem 3]{W1}: If 
$u(s, t)$  is
(i) continuous on $S_{\pi}$ and harmonic on its interior;
(ii) satisfies the bounds $u(0, t) e^{- |t|} \in L^1(\R),\, u(\pi, t) e^{- |t|} \in L^1(\R)$
and $\int_0^{\pi} |u(s, t)| ds = o(e^{|t|}), $
then 
\begin{equation} \label{u} u(s, t) 
= 
\frac{1}{2 \pi} 
\int_{\R} P(s, a - t) u(0,a) da 
+ 
\frac{1}{2 \pi} \int_{\R} P(\pi - s, a- t) u(\pi,a)da, 
\end{equation}
where $P$ is defined by \eqref{PoissonStripEq}.

The assumptions for Widder's theorem are satisfied when 
$$
u=\chi=\a_z+\Phi_z,
$$
with $\Phi_z$ the potential constructed in Lemma \ref{PolyGrowthLemma}. Indeed, then $\chi$
has polynomial growth at infinity on $S_T$
($\a_z$ itself is a bounded continuous function).
Consequently, \eqref{u} is valid when $u = \chi$.

We next consider the implications of this equation for $q_z$. 
As  in \cite{W2}, it simplifies the notation to put
$$
Q(s, t) 
= 
\frac{1}{4} 
\frac
{\cos \frac{\pi}{2} s}
{\cosh \frac{\pi}{2} t + \sin \frac{\pi s}{2}} 
= 
\frac14 P\Big(\frac{\pi}{2} s + \frac{\pi}{2}, \frac{\pi}{2}  t\Big)
$$
on the strip $s \in (-1,1), t \in \R$. 
One has \cite[(5)]{W2} 
$$
Q(s, t) 
= 
\frac{1}{2\pi} \int_{\R} 
e^{-\i t a}\;\; \frac{\sinh(1-s)a}{\sinh 2a} da, \;\; s \in (-1, 1). 
$$
Then, 
\begin{equation}
\label{UstEq}
\begin{aligned}
u(s, t) 
& =  
\int_{\R} Q(s,a-t) u(-1,a)da + \int_{\R} Q(-s,a-t) u(1,a)da
\cr
& =  
\int_{\R} e^{\i t a} \frac{\sinh (1 - s) a}{\sinh 2 a} \hat u(-1,a)da 
+ 
\int_{\R} e^{\i t a} \frac{\sinh (1 + s) a}{\sinh 2 a} \hat u(1,a)da.
\end{aligned}
\end{equation}
Note that this formula holds even when $u(\pm 1,\,\cdot\,)$ is of polynomial
growth. Then $\hat u(\pm 1,\,\cdot\,)$ is a temperate distribution
while
$\frac{\sinh (1 \mp s) a}{\sinh 2 a}$ is a 
Schwartz function for $s\in(-1,1)$,
and so the second equality holds by the definition of the Fourier transform
of a temperate distribution \cite[Definition 7.1.9]{Ho2}.

By a change of variable, for the strip $(s,t)\in[0,T]\times\RR$  and for $\Delta_0\chi=0$
with boundary values $\chi(0,\,\cdot\,)$ and $\chi(T,\,\cdot\,)$
we obtain,
$$
\chi(s, t) 
=  
\int_{\R} e^{\i t a} \frac{\sinh (T - s)a}{\sinh T a} \hat \chi(0,a)da 
+ 
\int_{\R} e^{\i t a} \frac{\sinh sa}{\sinh T a} \hat \chi(T,a)da.
$$
Thus,
$$
\begin{aligned} 
\del_s\chi(0,t)
= 
-
\int_{\R} e^{\i t a} a\coth Ta \hat \chi(0,a) da 
+ 
\int_{\R} e^{\i t a} \frac{ a}{\sinh T a} \hat \chi(T,a) da.
\end{aligned}
$$
Note that differentiation at the boundary is allowed since we can consider \eqref{UstEq}
as a distributional equation in $t$ with parameter $s$, and so one can pair \eqref{UstEq}
with any Schwartz function of $t$ and then differentiate in $s$.
Thus,
$$
\begin{aligned}
q_z(t)
& =
-\del_s\Phi_z(0,t)
-
D\coth TD \chi(0,\,\cdot\,)
+ 
\frac{D}{\sinh TD}\chi(T,\,\cdot\,)
\cr
& =:p_z(t) 
+ 
\frac{D}{\sinh TD}\chi(T,\,\cdot\,),
\end{aligned}
$$
where $D = \frac{1}{\i} \frac{d}{dt}$ on $\R$.
Inverting, $q_z$ must lie in the domain of the operator 
$\calA_{T,z}:\calS'\ra \calS'$ given
by
$$
\calA_{T,z}:u\mapsto \frac{\sinh TD}{D}(u-p_z)
=
\int_{\RR}e^{ita}\frac{\sinh Ta}{a}\widehat{(u-p_z)}(a)da
.
$$
Here $\calS'$ denotes temperate distributions on $\RR$.
By our earlier estimates $\chi(T,\,\cdot\,)$ is continuous and of at most
polynomial growth, hence belongs to $\calS'$.
Thus it follows that one has 
$\widehat{(q_z-p_z)}(\xi)=o(e^{-T|\xi|})$, since these are the Fourier
coefficients of $\chi(T,\,\cdot\,)$.

In particular $q_z-p_z\in L^2(\RR)$, so $q_z-p_z\in PW_T(\RR)$.
By a Paley--Wiener type theorem  
\cite[p. 121]{St}   
it follows that $q_z-p_z$ admits
an analytic continuation
\begin{equation} \label
{LCALPHI}  
(q_z-p_z) (s + \i t):= \int_{\R} e^{(s +\i t) a} \frac{ a}{\sinh T a} \hat \chi(T,a) da \end{equation} to the interior of a two-sided strip of width $2T$. 
\end{proof}

As mentioned in the Introduction, the Paley--Wiener condition on $q_z$ may be viewed 
as characterizing the range of a 
Dirichlet-to-Neumann map.
These leafwise Dirichlet-to-Neumann maps are induced by the global
Dirichlet-to-Neumann map for the HCMA, defined by
\begin{equation}  
\ncal^T(\vp_0, \vp_T) = \dot{\vp}_0 
\end{equation}
 from the endpoint $\vp_T$ at time $T$ of the geodesic arc from $\vp_0$ to $\vp_T$
 to the initial velocity $\dot{\vp}_0$ of the geodesic.

\subsection{Lifespan of generic Cauchy data}

We now complete the proof of Theorem~\ref{GENERIC}. 

Assume that the Cauchy problem for \eqref{HCMARayEq} with Cauchy data $(\vp_0,\dot\vp_0)$
admits a $C^3$ solution $\vp$.
For simplicity, we take the reference \K metric to be $\omega_{\vp_0}$ and then
the initial relative \K potential becomes zero. 
Then  \eqref{CauchyLaplaceLeaveLocalEq} reduces to

\begin{equation}
\label{CauchyLaplaceLeaveLocalEqa}
\begin{aligned}
1  +  \Delta_z \a_z
& =
0, \quad \h{\ on \ } S_T,
\cr
\a_z(\i t)
& = 0  \quad \h{\ on \ } \{0\}\times\RR,
\cr\dis
\frac{\del\a_z}{\del s}(\i t)
& = 
\dot\vp_0 (z) \quad \h{\ on \ } 
\h{\ on \ } 
\{0\}\times\RR.
\end{aligned}
\end{equation}
The last line follows since $\dot \vp_0$ is constant along its Hamiltonian flow 
orbits. It follows that
$q_z$ is a constant, and therefore Proposition \ref{LeafwiseProp}   implies that
\begin{equation}
\label{ZerothPWConditionEq} p_z = 
(- \partial_s - A_T)  \Phi_z |_{s = 0}\in PW_T(\RR)
\end{equation}
where 
$$
 A_T: =  D \coth T D. 
$$ 
Note that $A_T$ is an approximation (with respect to $T$)  to the 
Dirichlet-to-Neumann operator for the half-plane, which is the operator 
$$|D| f(t) = \int_{\R} e^{\i t a} |a| \hat{f}(a) da. $$

We further observe that, at least for some $z$, $\Phi_z$ is the 
pullback of a \kahler potential defined in a neighborhood
of $\Gamma_z(S_T)$.
Indeed, let $z_0$ be a non-degenerate maximum
point of $\dot{\vp}_0$ (one always exists for a generic
$\dot\vp_0$, which as far as proving Theorem \ref{GENERIC} we may assume is the case), 
so that the orbit of $z_0$ is $\{z_0\}$ 
and find a potential in a neighborhood of $z_0$.   If $z$ is sufficiently 
close to $z_0$ then  the orbit of $z$ under the Hamilton flow of $\dot{\vp}_0$
is non-trivial and is contained in 
the level set  $\{\dot{\vp}_0 = \dot{\vp}_0(z)\}\subset M$, which 
is close to $\{z_0\}$ (by the Morse theorem).  Moreover, this Hamilton orbit
is contractible in $M$ to $z_0$. By \eqref{FlowLeavesEqs}--\eqref{h} the
slices $\Gamma_z(\{s\}\times\RR)$ are all homotopic to the this initial Hamilton orbit.
Hence, $\Gamma_z(S_T)$ itself is contractible to $z_0$. Since $\o$ has a
local \K potential on any contractible neighborhood of $z_0$ in $M$,
the conclusion follows.

Assume from now on that
\begin{equation}\label
{GlobalPotentialAssumpEq}
\Phi_z = \Gamma_z^\star \Phi_0,
\end{equation}
where $\Phi_0$ is a smooth function defined on some
neighborhood of 
$\Gamma_z(S_T)$
in $M$.
For simplicity of notation, put
$$
H:= \dot{\vp}_0, \quad X_H:=X_{\dot\vp_0}^{\o_{\vp_0}}.
$$ 
Let $\nabla$ denote the gradient with respect to the associated metric $g_{\vp_0}$. Then
$$
\partial_s \Phi_z |_{s = 0} = 
\Gamma_z^\star J X_H \Phi_0|_{s=0}. 
$$
By \eqref{FlowLeavesEqs} and \eqref{GlobalPotentialAssumpEq}, 
the conclusion of Proposition \ref{LeafwiseProp}   can be rewritten as
\begin{equation} \label
{SecondPWConditionEq} 
\Gamma_z^\star  J X_H \Phi_0 + A_T \Gamma_z^\star  \Phi_0
=
\Gamma_z^\star d\Phi_0(\nabla H) + A_T \Gamma_z^\star  \Phi_0
 \in PW_T(\RR).  
\end{equation}

We now study this equation under particular deformations
of the Cauchy data.
We denote by 
$$
\ccal_{T,z} = \{(\vp_0, \dot\vp_0,z) \in C^3(M) \times C^3(M)\times M:
(\vp_0,\dot\vp_0) \h{\ is $T$-good and\ }
\eqref{ZerothPWConditionEq} \h{\ holds}\}.
$$
We claim that for  $z$ near a maximum point (as above),  the complement of $\ccal_{T, z}$
in $C^3(M) \times C^3(M)\times M$ is dense. 
By assumption $ (0, \dot{\vp}_0,z)\in\calC_{T,z} $. 
We fix such a $z$ for the rest of the argument. We may, and do, choose $z$
so that in addition it is a regular point for $H$.
Our first goal is to 
find a perturbation of $\dot\vp_0$ with the property that the orbit $\Gamma_z(\i \RR)$ is unchanged. 

Let $h$ be a $C^3$ 
function on $M$, and set for each $\eps\ge 0$,
$$ 
\begin{aligned}
H_{\epsilon} &:= H + \epsilon (H - H(z)) h,
\cr
V_z &:= \{w\in M\,:\, H_\eps(w)=  H_\eps(z))\}.
\end{aligned}
$$
Note that $V_z$ indeed is independent of $\eps\ge0$; by assumption $z\in M$
is a regular point for $H$ so that $V_z$ is a (real) hypersurface.
Also,  
$$
\begin{aligned}
X_{H_{\epsilon} }
:=
X^{\omega_{\vp_0}}_{H_{\epsilon} }
= (1 + \epsilon h) X_H \quad \h{along $V_z$}.
\end{aligned}
$$

Denote $\hat\Gamma_z(t):=\Gamma_z(\i t)$.
Then define
$$
\hat{\Gamma}_z^{\epsilon} ( t):= 
\exp t X_{H_{\epsilon}} (z)
=\hat{\Gamma}_z(  g_\eps(t)),
$$
since $\Gamma_z(\i \RR)\subset V_z$,
where $g_{\epsilon}: \R \to \R$ is a diffeomorphism defined by 
\begin{equation}
\label{gepsprimeEq}
g_\eps'(t):=
\frac{d}{dt} g_{\epsilon}(t) = 1 + \epsilon h\circ \hat{\Gamma}_z( t), 
\quad g_\eps(0)=0. 
\end{equation}
Thus,
\begin{equation}
\label{gepsEq}
g_{\epsilon}(t) = t + \eps\int_0^t h\circ\hat\Gamma_z(a) da. 
\end{equation}

To derive a contradiction, we assume that there exists some $\eps_0>0$ for which
$\{(0,H_\eps,z)\}_{\eps\in[0,\eps_0]}\subset\calC_{T,z}$.
In particular,
by \eqref{SecondPWConditionEq}, for sufficiently small $\eps\ge0$,
\begin{equation}  
\label{EP} 
(\hat\Gamma_z^{\epsilon})^\star d \Phi_0(\nabla H_{\epsilon}) 
+ A_T  (\hat\Gamma_z^{\epsilon})^\star  \Phi_0 \in PW_T(\R). 
\end{equation}
Thus,
\begin{equation}  
\label{EP'}  
\frac{d}{d \epsilon}\Big|_{\epsilon = 0} 
\Big(
(\hat\Gamma_z^{\epsilon})^\star d \Phi_0(\nabla H_{\epsilon}) 
+ A_T  (\hat\Gamma_z^{\epsilon})^\star  \Phi_0 
\Big)
\in PW_T(\R). 
\end{equation}
Note that 
$$ 
d \Phi_0(\nabla H_{\epsilon})|_{V_z}  
= 
(1+\eps h)
d\Phi_0 (\nabla H)|_{V_z}.
$$
Also
\begin{equation}
\label
{EpsVariationAzEq}  
\frac{d}{d \epsilon}\Big|_{\epsilon = 0} 
 (\hat\Gamma_z^{\epsilon})^\star  \Phi_0  = h_z \frac{d}{dt} \hat\Gamma_z^\star \Phi_0,
\end{equation}
where by \eqref{gepsEq}
\begin{equation}
\label{hzEq}
h_z(t)= \int_0^t h\circ\hat \Gamma_z(a) da. 
\end{equation}
Set also,
$$
\tilde h_z=\hat\Gamma_z^\star h.
$$
Then
\begin{equation}
\label{hztildeEq}
h_z'=\tilde h_z.
\end{equation}

First, 
\begin{equation} \label{FIRST}
\begin{aligned}
\frac{d}{d \epsilon}\Big|_{\epsilon = 0} 
(\hat\Gamma_z^{\epsilon})^\star d \Phi_0(\nabla H_{\epsilon}) 
 =
h_z  \big(\hat\Gamma_z^\star d\Phi_0 (\nabla H))\big)' +  
\tilde h_z
\hat\Gamma_z^\star d\Phi_0 (\nabla H). 
\end{aligned}
\end{equation}
Second,
\begin{equation}  \label{EP'2}  
\frac{d}{d \epsilon}\Big|_{\epsilon = 0} 
 A_T (\hat\Gamma_z^{\epsilon})^\star  \Phi_0  = A_T (h_z \frac{d}{dt} \hat\Gamma_z^\star \Phi_0).  
\end{equation}
Combining \eqref{EP'},  \eqref{FIRST} and \eqref{EP'2}, we get that
\begin{equation} \label{VAR}  
h_z  \big(\hat\Gamma_z^\star d\Phi_0 (\nabla H)) \big)'
+  
\tilde h_z \hat\Gamma_z^\star d\Phi_0 (\nabla H)
+ 
A_T (h_z \frac{d}{dt} \hat\Gamma_z^\star  \Phi_0) \in PW_T(\R),
\end{equation}
for all nonnegative functions $h \in C^{\infty}(M)$ for which
$(0,H_\eps,z)\in\calC_{T,z}$.

In this formula,  
\begin{equation}
\label{azbzEq}
a_z:= \hat\Gamma_z^* \Phi_0, \;\;\;\;\; b_z:=\hat\Gamma_z^* d\Phi_0(\nabla H) 
\end{equation}
are two fixed functions on $\RR$ satisfying (by \eqref{EP})
\begin{equation} \label{BASIC}   
b_z+ A_Ta_z \in PW_T(\RR). 
\end{equation}
By \eqref{hztildeEq}, rewrite equation \eqref{VAR} as
\begin{equation} \label{VARB} 
h_z b_z'+ \tilde h_z b_z +A_T(h_z a_z')
=
(h_z b_z)'+A_T(h_z a_z')
\in PW_T(\RR). 
\end{equation}
Our goal is now to show that \eqref{VARB} is impossible for a dense set of $h$.
We denote by $\Hilb:\calS'\ra \calS'$ the Hilbert transform,
defined by
$$
(\Hilb\, f)(t)=-\int_\RR \i\,\sign\,(\xi) e^{\i t\xi}\hat f(\xi) d\xi.
$$
\begin{lem}
\label{HilbertLemma}
Let $h_z$ be given by \eqref{hzEq}. Then 
$
\i h_z b_z+h_za_z'=
2h_z\Gamma_z^\star \dbar\Phi_0(X_H)
$
admits a holomorphic extension to $S_T$.
\end{lem}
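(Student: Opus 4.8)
The plan is to identify the combination $\i h_z b_z + h_z a_z'$ with a quantity that is manifestly a boundary value of a holomorphic function on $S_T$, exploiting the fact that $\Phi_0$ is a genuine \kahler potential in a neighborhood of $\Gamma_z(S_T)$ (this is the setting \eqref{GlobalPotentialAssumpEq} in which the lemma is invoked), so that $\Phi_z = \Gamma_z^\star\Phi_0$ is honestly harmonic-plus-something-holomorphic on the strip. Recall $a_z = \hat\Gamma_z^\star\Phi_0$ and $b_z = \hat\Gamma_z^\star d\Phi_0(\nabla H)$, both living on the initial boundary $\{0\}\times\RR$, while $h_z$ is the real function \eqref{hzEq}. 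The first step is to recognize, using \eqref{FlowLeavesEqs}, that along the leaf the two vector fields $\partial_s\Gamma_z$ and $\partial_t\Gamma_z$ push forward to $-\nabla_{g_{\vp_0}}H$ and $-J\nabla_{g_{\vp_0}}H = X_H$ respectively; hence on $\{0\}\times\RR$ one has $\partial_t(\Gamma_z^\star\Phi_0) = \Gamma_z^\star d\Phi_0(X_H)$ and $\partial_s(\Gamma_z^\star\Phi_0)|_{s=0} = -\Gamma_z^\star d\Phi_0(\nabla H) = -b_z$. Therefore $a_z' = \partial_t(\hat\Gamma_z^\star\Phi_0) = \Gamma_z^\star d\Phi_0(X_H)$, and the combination $b_z + \i a_z' $ becomes $\Gamma_z^\star\big(-d\Phi_0(\nabla H) + \i\, d\Phi_0(X_H)\big)$. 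Writing $d\Phi_0 = \partial\Phi_0 + \dbar\Phi_0$ and using $X_H = -J\nabla H$ together with the $(1,0)/(0,1)$ type decomposition and the convention $Y^{1,0} = \tfrac12 Y - \tfrac{\i}{2}JY$, this collapses (up to a factor of $2$) to $2\,\Gamma_z^\star \dbar\Phi_0(X_H)$ restricted to the boundary; multiplying through by $h_z$ gives the claimed identity $\i h_z b_z + h_z a_z' = 2 h_z \Gamma_z^\star\dbar\Phi_0(X_H)$.

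For the holomorphic-extension claim, the key observation is that $\Gamma_z: S_T\to M$ is holomorphic (it is a leaf of the \MA foliation, a holomorphic copy of $S_T$ by \eqref{GAMMADEF}), so $\Gamma_z^\star\dbar\Phi_0$ is the pullback of a smooth $(0,1)$-form, and contracted against $X_H$ — which is the real part of the holomorphic generator of the leafwise flow — one gets a function on $S_T$ whose $\dbar$ is controlled. More precisely, the cleanest route is: on $S_T$ the function $\Phi_z = \Gamma_z^\star\Phi_0$ satisfies $\i\ddbar\Phi_z = \omega_z = 2a_z\,ds\wedge dt$, and $\partial_\tau\Phi_z$ is then the holomorphic-type derivative whose $\dbar$ equals (a multiple of) $a_z$; the combination $2\dbar\Phi_0(X_H)$ pulled back is, along the leaf, precisely $\partial_t\Phi_z + \i\,(\text{something})$ arranged so that it is the restriction of the holomorphic function $\partial_\tau\Phi_z$ on $S_T$ times a constant — i.e. $\i b_z + a_z' = 2\,\partial_\tau\Phi_z|_{s=0}$ after bookkeeping with the conventions $\partial_\tau = \tfrac12\partial_s - \tfrac{\i}{2}\partial_t$. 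Since $\partial_\tau\Phi_z$ is holomorphic on $S_T$ wherever $\Phi_z$ is pluriharmonic — and here $\Phi_z = \Gamma_z^\star\Phi_0$ with $\Phi_0$ smooth near $\Gamma_z(S_T)$, so $\partial_\tau\Phi_z$ extends smoothly to $S_T$ and is holomorphic — this gives the extension of $\i b_z + a_z'$, and hence of $\i h_z b_z + h_z a_z'$ once we also extend $h_z$.

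The extension of $h_z$ is the last step and the one requiring a small argument rather than bookkeeping: $h_z(t) = \int_0^t h\circ\hat\Gamma_z(a)\,da$ is only defined on the real line a priori, so to make $\i h_z b_z + h_z a_z'$ holomorphic on $S_T$ one needs $h_z$ (or rather the product) to extend. Here one uses that $\hat\Gamma_z^\star h = \Gamma_z^\star h|_{s=0}$ and that $\Gamma_z: S_T\to M$ is holomorphic, so $\Gamma_z^\star h$ is a smooth function on $S_T$ — real-analytic in fact only if $h$ is, which it need not be; so instead one extends $h_z$ by $\int_0^\tau \Gamma_z^\star h\,d\tau'$ along paths in $S_T$, which is well-defined and holomorphic precisely because $\dbar(\Gamma_z^\star h)$ is smooth and we only need $h_z$ as a $\dbar$-primitive with the correct $\dbar$-derivative so that the product $h_z\cdot(\text{holomorphic})$ has $\dbar$ equal to $(\dbar h_z)\cdot(\text{holomorphic})$, and this combination is exactly what cancels in $2h_z\Gamma_z^\star\dbar\Phi_0(X_H)$ — wait, rather: the point is that $2\Gamma_z^\star\dbar\Phi_0(X_H)$ is itself $\dbar$-closed as a function only up to the $\omega_z$ term, so one must check the $\dbar$ of the product vanishes. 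I expect the main obstacle to be precisely this: verifying that $\dbar\big(h_z\cdot 2\Gamma_z^\star\dbar\Phi_0(X_H)\big) = 0$ on $S_T$, i.e. tracking how the non-holomorphicity of $\Phi_z$ (captured by $\i\ddbar\Phi_z = \omega_z \neq 0$) interacts with the non-holomorphicity of $h_z$; the resolution is that along the leaf the relevant contraction kills the $(1,1)$-part, because $X_H$ and $JX_H$ span the tangent space and $\omega_z(X_H, \cdot)$ is exact by construction of the Hamiltonian flow, so the two error terms cancel. Once that cancellation is checked, the Paley–Wiener / holomorphic-strip conclusion is immediate.
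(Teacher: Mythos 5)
Your bookkeeping identity at the end (relating $a_z'$, $b_z$ to $2\Gamma_z^\star\dbar\Phi_0(X_H)$ via \eqref{FlowLeavesEqs} and $JX_H=\nabla H$) is essentially right, modulo a factor-of-$\i$ slip (the relevant combination is $a_z'+\i b_z$, not $b_z+\i a_z'$), and it is indeed all the paper uses that step for. But the analytic-continuation claim --- the actual content of the lemma --- is not established by your argument, and it cannot be established along your route. You never invoke the Paley--Wiener condition \eqref{VARB} (equivalently, the standing contradiction hypothesis that the perturbed data $(0,H_\eps,z)$ remain in $\ccal_{T,z}$, which produces \eqref{EP} and its $\eps$-derivative \eqref{EP'}). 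Without that hypothesis the statement is false: if $h_z(\i b_z+a_z')$ extended holomorphically to $S_T$ for \emph{every} $C^3$ perturbation $h$ merely because a $C^3$ solution exists and $\Phi_z=\Gamma_z^\star\Phi_0$, then the division-and-reflection argument of Lemma \ref{hzACLemma} would force $\hat\Gamma_z^\star h$ to be real-analytic on a fixed interval for every smooth $h$, which is absurd (and would contradict the existence of smooth short-time solutions, e.g.\ in the toric case). Concretely, your mechanism breaks exactly where you suspect: $\Phi_z$ is not pluriharmonic, since $\i\ddbar\Phi_z=2a_z\,ds\wedge dt$ with $a_z>0$ on a nontrivial leaf, so $\del_\tau\Phi_z$ is not holomorphic; your proposed extension of $h_z$ by $\int_0^\tau\Gamma_z^\star h$ is not holomorphic because $\Gamma_z^\star h$ is merely smooth; and the asserted cancellation of the two $\dbar$-errors is never proved --- by the counterexample just described, no such unconditional cancellation can exist. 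Multiplying a function that does extend (namely $\i b_z+a_z'$, which follows from \eqref{VARB} with $h\equiv1$) by an arbitrary real $h_z$ destroys extendability, so ``once we also extend $h_z$'' begs precisely the question this lemma and Lemma \ref{hzACLemma} are designed to settle.

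For comparison, the paper's proof is Fourier-analytic and takes \eqref{VARB}, i.e.\ $(h_zb_z)'+A_T(h_za_z')\in PW_T(\RR)$, as its input. Taking Fourier transforms, one replaces $\xi\coth T\xi$ by $\xi\,\sign(\xi)$ up to an $O(e^{-2T|\xi|})$ error (harmless since $a_z'\in L^\infty(\RR)$ and $h_z/t\in L^\infty(\RR)$), obtaining $h_zb_z+\Hilb(h_za_z')\in PW_T(\RR)$; applying $\Hilb$, which preserves $PW_T(\RR)$, and combining gives $(I+\i\Hilb)(\i h_zb_z+h_za_z')\in PW_T(\RR)$, i.e.\ $\calF(\i h_zb_z+h_za_z')(\xi)=o(e^{-T\xi})$ for $\xi>0$; a one-sided Paley--Wiener theorem then yields the holomorphic extension to the one-sided strip $S_T$, and only at that point is the leafwise identity with $2h_z\Gamma_z^\star\dbar\Phi_0(X_H)$ inserted via \eqref{azbzEq}. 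If you want to repair your plan, the first-variation condition \eqref{VARB} must enter as a hypothesis; the leafwise geometry alone does not carry the analyticity.
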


\begin{proof}
By \eqref{VARB}, 
$$
\i\xi\,\widehat{h_zb_z}+\xi\coth T\xi\,\widehat{h_za_z'}=o(e^{-T|\xi|}).
$$
Now, $\coth T\xi-\sign(\xi)=O(e^{-2T|\xi})$.
Since $a_z'\in L^\infty(\RR)$ and $h_z/t\in L^\infty(\RR)$, this implies
$$
\i\xi\,\widehat{h_zb_z}+\xi\sign(\xi)\,\widehat{h_za_z'}=o(e^{-T|\xi|}),
$$
or
$\widehat{h_zb_z}-\i\sign(\xi)\,\widehat{h_za_z'}=o(e^{-T|\xi|})$,
i.e., $h_zb_z+\Hilb(h_za_z')\in PW_T$. Since by definition $\Hilb$
maps $PW_T$ to itself, we also have $\Hilb(h_zb_z)-h_za_z'\in PW_T$. Multiply the
former equation by $\i$ and add it to the latter to obtain
$$
(I-\i\Hilb)(\i h_z b_z-h_za_z')\in PW_T,
$$ 
and by conjugation
$(I+\i\Hilb)(\i h_z b_z+h_za_z')\in PW_T$. Since $I+\i\Hilb$
is twice the orthogonal projection operator onto the positive frequency
space, it follows that
$$
\calF(\i h_z b_z+h_za_z')(\xi)=o(e^{-T\xi}), \quad \h{for all $\xi>0$}.
$$
By the proof of \cite[Theorem 3.1]{St} it follows that 
$\i h_z b_z+h_za_z'$ admits a holomorphic extension to the one-sided strip $S_T$.
The lemma now follows from \eqref{FlowLeavesEqs} and \eqref{azbzEq},
and $d\Phi_0(X_H+\i JX_H)=2 d\Phi_0(X^{0,1}_H)=2\dbar\Phi_0(X_H)$.
\end{proof}

\begin{lem}
\label{hzACLemma}
There exist $\alpha<\beta\in\RR$ and $\eps\in(0,T]$ 
all independent of $T$ (but depending on $z$) such that 
$h_z$, given by \eqref{hzEq}, admits a holomorphic extension to 
the two-sided rectangle $[-\eps,\eps]\times [\alpha,\beta]\subset S_T\cup \overline{S_T}$.
In particular, $\hat\Gamma_z^\star h$ is real-analytic on $[\alpha,\beta]$.
\end{lem}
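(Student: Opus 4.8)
The plan is to realize $h_z$ as the quotient $(h_z r_z)/r_z$ of two functions that extend holomorphically to $S_T$, and then to glue the resulting one‑sided extension to its conjugate across $\RR$ — the point being that $h_z$ is real‑valued. Write $r_z:=\i b_z+a_z'$, so that $h_z r_z=\i h_z b_z+h_z a_z'=2h_z\,\Gamma_z^\star\dbar\Phi_0(X_H)$ is exactly the function treated in Lemma \ref{HilbertLemma}; thus $h_z r_z$ extends holomorphically to $S_T$. I would first record that $r_z$ is a fixed bounded smooth function on $\RR$, built from $z$ and the unperturbed Cauchy data alone with no dependence on $h$ or $T$, since $a_z,b_z\in L^\infty(\RR)$ by \eqref{GlobalPotentialAssumpEq} and compactness.

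The first real step is to show that $r_z$ \emph{itself} extends holomorphically to $S_T$. I would do this by rerunning the Hilbert‑transform computation from the proof of Lemma \ref{HilbertLemma} with the factor $h_z$ replaced by the constant $1$: the input \eqref{VARB} is then replaced by \eqref{BASIC}, that is $b_z+A_T a_z\in PW_T(\RR)$, which is available because the (unperturbed) Cauchy data admits a $C^3$ solution and hence satisfies Proposition \ref{LeafwiseProp}. Since $A_T=D\coth TD$, the discrepancy multiplier $\xi\coth T\xi-|\xi|$ is $O(e^{-2T|\xi|})$ at infinity, and $a_z$ is bounded, the same manipulation as in Lemma \ref{HilbertLemma} gives $b_z+\Hilb(a_z')\in PW_T(\RR)$; applying $\Hilb$ and recombining yields $(I+\i\Hilb)r_z\in PW_T(\RR)$, so $\widehat{r_z}(\xi)=o(e^{-T|\xi|})$ for $\xi>0$, and therefore $r_z$ extends holomorphically to the one‑sided strip $S_T$ by the proof of \cite[Theorem 3.1]{St}.

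Next I would fix the interval and the width. Choosing $z$ — among the points near the maximum $z_0$ for which \eqref{GlobalPotentialAssumpEq} holds — so that $r_z\not\equiv 0$ on $\RR$, pick $t_0$ with $r_z(t_0)\neq 0$, and let $R$ and $P$ be the holomorphic extensions to $S_T$ of $r_z$ and of $h_z r_z$. Both are given by Laplace‑type integral formulas in their boundary values, so $R$ depends only on $r_z|_\RR$, not on $T$; since $R(\i t_0)=r_z(t_0)\neq 0$ and $R$ is continuous, there are $\alpha<t_0<\beta$ and $\eps_0=\eps_0(z)>0$, independent of $T$ and of $h$, with $R\neq 0$ on $[0,\eps]\times[\alpha,\beta]$ where $\eps:=\min(T,\eps_0)$. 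On this rectangle $h_z=P/R$ is holomorphic, with boundary value $h_z|_{[\alpha,\beta]}$ along $\{0\}\times[\alpha,\beta]$. Since $h_z,a_z',b_z$ are real, $\tau\mapsto\overline{P(-\bar\tau)}$ and $\tau\mapsto\overline{R(-\bar\tau)}$ are holomorphic on $\overline{S_T}$, extend $h_z\overline{r_z}$ and $\overline{r_z}$, and are nonvanishing on $[-\eps,0]\times[\alpha,\beta]$; and because $h_z$ is real, $\overline{P(-\bar\tau)}/\overline{R(-\bar\tau)}$ is the holomorphic extension of $h_z$ to that left rectangle, again with boundary value $h_z|_{[\alpha,\beta]}$. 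A standard removable‑singularity argument (Morera's theorem: a continuous function holomorphic off a real segment is holomorphic across it) then glues the two one‑sided pieces into a holomorphic extension of $h_z$ to $[-\eps,\eps]\times[\alpha,\beta]$; since $h_z'=\hat\Gamma_z^\star h$, this also gives that $\hat\Gamma_z^\star h$ is real‑analytic on $[\alpha,\beta]$.

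I expect the genuine difficulty to be the uniformity in $T$ that the statement stresses. It is not obvious a priori that the zero‑free rectangle of $R$ and the sampling point $t_0$ can be chosen without reference to $T$; this works only because $r_z$, hence its holomorphic extension $R$ (given by a $T$‑free formula), is intrinsic to the Cauchy data and does not move as $T$ varies. A secondary subtlety is that $r_z$ is only one‑sidedly holomorphic — it is not real‑analytic on $\RR$ — so the holomorphicity of $h_z$ across $\RR$ cannot be produced by dividing two two‑sided extensions; it is precisely the reality of $h_z$ that makes the right‑ and left‑hand one‑sided extensions of $h_z$ agree on $\RR$, and only then do they glue.
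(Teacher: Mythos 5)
Most of your argument runs parallel to the paper's: the paper also obtains $h_z$ as a quotient, dividing the one-sided extension of $h_z(\i b_z+a_z')$ furnished by Lemma \ref{HilbertLemma} by a one-sided extension of $\i b_z+a_z'$, and then uses the reality of $h_z$ (Schwarz reflection, which is your Morera gluing) to pass to a two-sided rectangle over an interval where the denominator does not vanish, with the $T$-independence of $[\alpha,\beta]$ coming, as in your write-up, from the fact that $a_z,b_z$ are built from the unperturbed data alone. Your one genuine variation is the way you extend $r_z=\i b_z+a_z'$: you rerun the Hilbert-transform manipulation directly on \eqref{BASIC} (equivalently \eqref{ZerothPWConditionEq} under \eqref{GlobalPotentialAssumpEq}), whereas the paper first argues that \eqref{VARB} holds for $h\equiv 1$ (because replacing $\dot\vp_0$ by $(1+\eps)\dot\vp_0$ merely reparametrizes the solution, so $(0,(1+\eps)\dot\vp_0,z)\in\ccal_{T',z}$) and then applies Lemma \ref{HilbertLemma} with $h_z(t)=t$. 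Your shortcut is legitimate and at the same level of rigor as the paper's multiplier estimates, and it even weakens the nondegeneracy you need from the paper's $a_z'\not\equiv 0$ to $r_z\not\equiv 0$.

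There is, however, a genuine gap: you simply ``choose $z$ so that $r_z\not\equiv 0$'', with no argument that such a $z$ exists. Since $r_z=2\hat\Gamma_z^\star\dbar\Phi_0(X_H)$, the degenerate situation $r_z\equiv 0$ for \emph{every} admissible $z$ is a real possibility: it occurs exactly when $d\Phi_0$ annihilates the complex line spanned by $\nabla^{1,0}H$ along all the relevant orbits, and this is compatible with $\i\ddbar\Phi_0>0$ (locally, take $\Phi_0=|w_1|^2+|w_2|^2$ and $H$ a function of $[w_1:w_2]$, e.g. $H=|w_1|^2/(|w_1|^2+|w_2|^2)$; then $d\Phi_0(\nabla H)=d\Phi_0(X_H)=0$ on an open set). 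Moreover $z$ was already fixed earlier in the proof of Theorem \ref{GENERIC}, subject to \eqref{GlobalPotentialAssumpEq} and regularity of $H$ at $z$, so the selection of $z$ is not a free parameter at this stage. The paper confronts precisely this degenerate scenario in the last paragraph of its proof (there in the form $W_z=\RR$, i.e.\ $a_z'\equiv 0$): it observes that if it holds near $z$ then $\Phi_0$ is functionally dependent on $H$ there, and removes it by a further perturbation --- of $H$, or of $\Phi_0$ by the real part of a local holomorphic function, which does not change $\o_{\vp_0}$ --- which is admissible inside the density argument being run. Your proof needs some such step (adapted to the weaker condition $r_z\equiv 0$); without it, there may be no $t_0$ with $r_z(t_0)\neq 0$ at all, and the quotient construction never gets started. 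The $T$-uniformity point you flag as the main difficulty is, by contrast, handled correctly: $r_z$ and hence $t_0,[\alpha,\beta]$ depend only on the Cauchy data and $z$, exactly as the paper's $W_z$ does.
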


\begin{proof}
First, observe that \eqref{VARB} is true for the constant function $h\equiv 1$ on $M$:
then $(0,(1+\eps)\dot\vp_0,z)\in\calC_{T',z}$
for all $\eps\in[0,\eps_0]$, for some $T'<T$. 
In fact, there exists then a solution to the HCMA \eqref{HCMARayEq}
for some $T'<T$ by reparametrizing $\vp$ in the $s$ variable, and $T'\ra T$
as $\eps_0\ra0$. This proves the claim.

Lemma \ref{HilbertLemma} implies that 
$h_z(\i b_z+a_z')$, respectively $h_z(-\i b_z+a_z')$,
admits a holomorphic extension to $S_T$, respectively, $\overline{S_T}$.
When $h\equiv1$, then $\tilde h_z\equiv 1$ and $h_z(t)=t$, on $\RR$.
Thus, by the previous paragraph, these estimates hold for $h_z=t$.
Hence, $\i b_z+a_z'$, respectively $-\i b_z+a_z'$,
admits a holomorphic extension to $S_T$, respectively, $\overline{S_T}$.
By dividing, and since $h_z$ is real, it follows by the Schwarz
reflection principle that $h_z$ admits a holomorphic extension
to some rectangle $[-\eps,\eps]\times[\alpha,\beta]\subset S_T$ whenever
$\i b_z+a_z'$ does not vanish on $[\alpha,\beta]$ (here we also used
the fact that zeros of holomorphic functions cannot have an accumulation
point).
In particular, $h_z$ is real analytic on $\RR\setminus W_z$, where
$$
W_z:=\{t\in\RR\,:\,a_z'=\Gamma_z^\star X_H\Phi_0(\i t)=
d\Phi_0(X_H)\circ \Gamma_z(\i t)=0\}.
$$

The proof is complete if $\RR\sm W_z$ contains an open interval.
Since $W_z$ is closed it thus suffices to rule out the case where $W_z=\RR$, i.e.,
$a_z'=0$, and hence $a_z=0$, on $\RR$.
If this holds for every point in a neigborhood of $z$
then $\Phi_0$ must be a function of $H$ on some
neighborhood of $z$ in $M$.
Clearly, this is a non-generic property and perturbing either $H$
or adding to $\Phi_0$ the real part of a generic local 
holomorphic function (this does not require changing $\o_{\vp_0}$)  
will destroy this property.
Thus, $h_z$ must be real-analytic at least on some
open interval on $\RR$, and by \eqref{hzEq} and differentiation 
so is $\hat\Gamma_z^\star h$. Now, let 
$[\alpha,\beta]\subset \RR\setminus W_z$ be
any nonempty interval, and note that $W_z$ is independent of $T$.
\end{proof}

We now complete the proof of Theorem \ref{GENERIC}.

By taking $\beta-\alpha$ sufficiently small we may assume
that $\hat\Gamma_z:[\alpha,\beta]\ra M$ is an embedded curve.
Consider the map $R_z:C^3(M)\ra C^3([\alpha,\beta])$ defined
by $R_zf:=\hat\Gamma_z^\star f|_{[\alpha,\beta]}$. Observe
that $R_z$ is a bounded surjective linear operator. Hence,
it defines an open map. Let $B$ be any open ball in $C^3(M)$
containing the zero function $0$.
If for some $T>0$, $\{(\vp_0,\dot\vp_0+f,z)\,:\, f\in B\}\subset \calC_{T,z}$
then Lemma \ref{hzACLemma} implies that $R_z(B)$
is contained in the subset of real-analytic functions in $C^3([\alpha,\beta])$,
with $[\alpha,\beta]$ independent of $T>0$.
However, the latter is not an open subset in $C^3([\alpha,\beta])$.
This concludes the proof of Theorem \ref{GENERIC}.

\section{The smooth lifespan of the HRMA}
\label{LifespanHRMASection}

In this section we restrict to toric manifolds and prove Proposition
\ref{ToricLifespanProp} concerning the analytic continuation of orbits
of Hamiltonian orbits and the invertibility of the associated Moser
maps. The first part, concerning the infinite analytic continuation
of the Hamiltonian flow defined by the Cauchy data, is proved
in Lemma \ref{AnalyticContinuationHamOrbitsToricLemma}. 
The second part, concerning the invertibility of the Moser
maps, is proved in Lemma \ref{InvertibilityMoserMapsToricLemma}.

\subsection{Some background on toric \K manifolds}
\label{ToricBackgroundSubsection}

We briefly recall some background facts on toric \K manifolds.
For more detailled background we refer to \cite{R,RZ1} and references therein.

A symplectic toric manifold is a compact closed
\K manifold $(M,\o)$ whose automorphism group contains a complex
torus $(\CC^\star)^n$ whose action on a generic point is
isomorphic to $(\CC^\star)^n$, and for which the real torus
$(S^1)^n\subset (\CC^\star)^n$ acts in a Hamiltonian fashion by
isometries.

We will work with coordinates on the open dense orbit of the
complex torus given by $z_j=e^{x_j/2+\i\th_j}, j=1,\ldots,n$, with
$(x,\th)=(x_1,\ldots,x_n,\th_1,\ldots,\th_n)\in\R^n\times (S^1)^n$.
Let $M_\open\isom(\CC^\star)^n$ be the open orbit
of the complex torus in $M$ and write
\begin{equation}
\label{OpenOrbitPotentialEq}
\o|_{M_\open}=\i\ddbar\psi_\o.
\end{equation}
We call $\psi_\o$ the open-orbit \K potential of $\o$.
The real torus $(S^1)^n\subset (\CC^\star)^n$
acts in a Hamiltonian fashion with respect to $\o$. 
The image of the moment map $\nabla\psi_\o$ is a convex Delzant polytope
$P\subset\RR^n$ and depends only on $[\o]$ (note that $\o$ only determines
$P$ up to translation; we fix a strictly convex $\psi_\o$ satisfying
(\ref{OpenOrbitPotentialEq}) to fix $P$). We further assume that
this is a lattice polytope. Being a lattice Delzant polytope
means that: (i) at each vertex meet exactly $n$ edges,
(ii) each edge is contained in the set of points $\{p+tu_{p,j}\,:\, t\ge0\}$
with $p\in\ZZ^n$ a vertex, $u_{p,j}\in\ZZ^n$ and
\begin{equation}
\label{UpjEdgesDef}
\h{span}\{u_{p,1},\ldots,u_{p,n}\}=\ZZ^n.
\end{equation}
Equivalently, there
exist outward pointing normal vectors
$\{v_j\}_{j=1}^d\subset\ZZ^n$ that are primitive (i.e., their
components have no common factor) to the $d$ facets in $\partial
P$ and $P$ may be written as
\begin{equation}
\label{PdefEq}
P=\{y\in\RR^n\,:\, l_j(y):=\langle
y,v_j\rangle-\lambda_j\le0,\quad j=1,\ldots,d\},
\end{equation}
with $\lambda_j=\langle p,v_j\rangle\in\ZZ$ with $p$ any vertex on
the $j$-th facet, and $y$ the coordinate on $\RR^n$.

Given a toric metric $\o_\vp$ its corresponding open-orbit \K potential $\psi$ is a strictly
convex function on $\RR^n$ in logarithmic coordinates. Therefore its gradient $\nabla\psi$ is one-to-one onto
$P=\overline{\Im\nabla\psi}$. Its Legendre dual $u:=\psi^\star$, called the symplectic 
potential,
is a strictly convex function on $P$.
Recall the following formulas that will be used throughout 
\begin{equation}
\label
{GradientLegendreTransformEq}
(\nabla\psi)^{-1}(y)=\nabla u(y),
\end{equation}
\begin{equation}
\label{HessianLegendreTransformEq}
(\nabla^2 \psi)^{-1}|_{(\nabla\psi)^{-1}(y)}=\nabla^2 u|_y,
\end{equation}
and if $\eta(s)$ is a one-parameter family of \K potentials and
$u(s):=\eta(s)^\star$ the corresponding symplectic potentials
then
\begin{equation}
\label{VariationPotentialEq}
\dot\eta(s) =-\dot u(s)\circ \nabla\eta(s).
\end{equation}
The proofs of these identities, assuming at least $C^2$ regularity,
can be found in \cite[pp. 84--87]{R}.

\subsection{Complexifying Hamiltonian flows on toric manifolds}
\label{ComplexifyingHamFlowsSubsection}

First we establish the following result regarding the existence of analytic
continuations for the Hamiltonian orbits. It shows that on a toric manifold
any smooth Cauchy data is good, and moreover gives an explicit expression for
the associated Moser maps.

\begin{lem}
\label
{AnalyticContinuationHamOrbitsToricLemma}
Let $(M,J,\o)$ be a toric \K manifold. Given a toric
\K potential $\vp_0$ let $\psi_0$ be a smooth strictly convex function on $\RR^n$
such that over the open orbit $\o_{\vp_0}=\i\ddbar\psi_0$,
and let
$\dot\vp_0$ be a smooth torus-invariant function on $M$.
For every $z\in M_\open$,
the orbit of the Hamiltonian vector field $X_{\dot\vp_0}^{\o_{\vp_0}}$
admits an analytic continuation to the strip $S_\infty$.
Moreover, it is given explicitly by 
\begin{equation} 
f_{\tau}(z) =
\exp-\i\tau X_{\dot\vp_0}^{\o_{\vp_0}}:
z\mapsto z-\tau(\nabla^2\psi_0)^{-1}\nabla_{x}\dot\vp_0,\quad \tau\in S_\infty.
\end{equation}
This expression remains valid on the divisor at infinity if we restrict to the orbit coordinates $\tilde x$
on a slice containing $z$.
\end{lem}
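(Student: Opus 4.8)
The plan is to compute the Hamiltonian flow of $\dot\vp_0$ explicitly in the logarithmic coordinates $z_j = e^{x_j/2 + \i\th_j}$ on the open orbit $M_\open$, exploit torus-invariance to reduce to an ODE on the $x$-variables alone, solve that ODE, and then observe that the resulting formula is affine-linear in the time parameter, hence trivially extends to complex time $\tau \in S_\infty$.

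First I would write down $\o_{\vp_0} = \i\ddbar\psi_0$ in these coordinates. Since $\psi_0$ depends only on $x$, one gets $\o_{\vp_0} = \frac{1}{4}\sum_{j,k}\psi_{0,jk}\, dx^j\wedge d\th^k$ up to the standard normalization (here $\psi_{0,jk}$ are the entries of the Hessian $\nabla^2\psi_0$). Because $\dot\vp_0$ is torus-invariant it too is a function of $x$ only, so the Hamiltonian vector field $X_{\dot\vp_0}^{\o_{\vp_0}}$ defined by $\iota_{X}\o_{\vp_0} = d\dot\vp_0 = \sum_j (\d\dot\vp_0/\d x^j)\, dx^j$ has no $\d/\d x$ component and is given purely in the angular directions: $X_{\dot\vp_0}^{\o_{\vp_0}} = -\sum_{j,k}(\nabla^2\psi_0)^{-1}_{jk}\,\frac{\d\dot\vp_0}{\d x^k}\,\frac{\d}{\d\th^j}$, again up to the fixed normalization constant. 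The key structural point is that this vector field is constant along its own flow: its coefficients depend only on $x$, and moving in the $\th$-directions does not change $x$. Hence the real Hamiltonian orbit through $z = e^{x+\i\th}$ is simply $t \mapsto e^{x + \i(\th + t\,v(x))}$ where $v(x) = -(\nabla^2\psi_0)^{-1}\nabla_x\dot\vp_0$, i.e. in the additive coordinate $x + \i\th$ on the universal cover it reads $\tau=t \mapsto (x+\i\th) + \i t\, v(x)$. Replacing $t$ by $\tau = s + \i t \in S_\infty$ now manifestly gives an entire (in fact affine) curve $\tau \mapsto (x+\i\th) + \i\tau\, v(x) = (x+\i\th) - \tau(\nabla^2\psi_0)^{-1}\nabla_x\dot\vp_0$, which restricts to the real orbit at $s=0$ and thus is \emph{the} holomorphic extension (unique by the Monodromy Theorem, as recorded after Definition \ref{THamAnalyticDef}). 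This establishes $T$-Hamiltonian analyticity for every $T>0$ and yields the claimed closed form for $f_\tau$.

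For the final sentence about the divisor at infinity: here I would argue by continuity and the fact that on each torus-fixed stratum one has analogous orbit coordinates $\tilde x$ adapted to the subtorus acting freely there. The explicit affine formula extends continuously from $M_\open$ to the boundary strata — this is where one uses that $\psi_0$ and $\dot\vp_0$ extend smoothly to $M$ and that $\nabla^2\psi_0$ remains invertible in the appropriate reduced coordinates on each stratum (a standard feature of Delzant/Guillemin coordinates, cf. the background in \S\ref{ToricBackgroundSubsection}) — so the same affine expression, read in the slice coordinates $\tilde x$, describes the orbit there.

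The main obstacle I expect is purely bookkeeping: getting the normalization constants right (the factors of $\frac14$, $\frac12$, the sign conventions for $X_H$, $\d/\d\tau = \frac12\d/\d s - \frac{\i}{2}\d/\d t$, $Y^{1,0} = \frac12 Y - \frac{\i}{2}JY$ as in \S\ref{HCMAInvertibilityMoserSubsection}), so that the final formula comes out exactly as $z - \tau(\nabla^2\psi_0)^{-1}\nabla_x\dot\vp_0$ rather than off by a factor; and, secondarily, making the "extends to the divisor" claim precise without developing the full apparatus of toric symplectic reduction — I would handle that by a short continuity argument referring to the standard local model rather than a from-scratch computation. The conceptual content — constancy of the vector field along its flow forcing the orbit to be affine, hence entire — is immediate once the coordinates are set up.
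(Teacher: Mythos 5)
Your treatment of the main assertion (points $z\in M_\open$) is correct and is essentially the paper's own argument: torus-invariance makes the Hamiltonian vector field purely angular with coefficients depending only on $x$, so the flow is affine in time and continues holomorphically; the paper phrases the same computation in action-angle/moment coordinates $(y,\th)$ with $y=\nabla\psi_0(x)$, which is equivalent to your direct computation in logarithmic coordinates. One small internal slip: with the convention $f_\tau=\exp(-\i\tau X_{\dot\vp_0}^{\o_{\vp_0}})$ the continuation is obtained by substituting $t\mapsto -\i\tau$ into the real flow, not by ``replacing $t$ by $\tau$''; your displayed intermediate expression $(x+\i\th)+\i\tau\,v(x)$ is off by a factor of $\i$ (and would not restrict to the real orbit at $s=0$), although the final formula $z-\tau(\nabla^2\psi_0)^{-1}\nabla_x\dot\vp_0$ you state is the correct one.

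The genuine gap is the last sentence of the Lemma, concerning points of the divisor $D=M\setminus M_\open$. A continuity argument from the open orbit does not suffice as you describe it: the coordinates $x$ (and hence $\nabla^2\psi_0$) degenerate at $D$, and ``$\nabla^2\psi_0$ remains invertible in the appropriate reduced coordinates'' is not the correct non-degeneracy statement --- $\psi_0$ does not restrict to the stratum at all. What must be shown is that the flow preserves each stratum (the interior of the codimension-$k$ toric subvariety through $z$) and that, in slice-orbit coordinates $(z',z'')$ with $\tilde x$ the orbit coordinates, the same affine formula holds with $(\nabla^2_x\psi_0)^{-1}\nabla_x\dot\vp_0$ replaced by the appropriate $(n-k)\times(n-k)$ block of the Hessian $\nabla^2_{\tilde y}u_0$ of the \emph{symplectic} potential applied to $(\nabla_{\tilde x_{k+1}}\dot\vp_0,\ldots,\nabla_{\tilde x_n}\dot\vp_0)$. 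This is exactly the ``toric bookkeeping'' half of the paper's proof: one chooses, using the Delzant condition, an adapted basis built from the facet normals $v_1,\ldots,v_k$ and edge vectors at a vertex of the face $F$, writes $\o_{\vp_0}|_{(F\setminus\del F)\times(S^1)^{n-k}}=\sum_{j>k}d\tilde y_j\wedge d\tilde\th_j$, and verifies that the bottom block of $\nabla^2_{\tilde y}u_0$ is invertible there. Your outline gestures at the right local model, but without this in-stratum computation the addendum of the Lemma (and, downstream, the claim that the data is $T$-good for \emph{all} $z\in M$, which Proposition \ref{ToricLifespanProp} needs) is not established.
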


Here (and in similar expressions below) by $(\nabla^2\psi_0)^{-1}\nabla_{x}\dot\vp_0$ we mean the usual
matrix multiplication of the matrix $(\nabla^2\psi_0)^{-1}(x)$
and the vector $\nabla_{x}\dot\vp_0(x)$.

\begin{proof}
The moment coordinates $y$ on the polytope $P$ and the angular coordinates on the regular orbits
are action-angle coordinates for the $(S^1)^n$ Hamiltonian action on $(M,\o_{\vp_0})$, in other words
\begin{equation}
\label{SymplecticFormMomentCoordsEq}
(\nabla\psi_0)_\star\o_{\vp_0}=\sum_{j=1}^n dy_j\w d\th_j,\quad \h{\ over\ \ $(P\setminus\del P)\times(S^1)^n$}.
\end{equation}
The Hamiltonian vector field of $\dot\vp_0$ is given in these coordinates by
\begin{equation}
\label{PushForwardHamVectorFieldEq}
(\nabla\psi_0)_\star X^{\o_{\vp_0}}_{\dot\vp_0}
=
-\sum_{j=1}^n\frac{\del\dot\vp_0}{\del y_j}\big((\nabla\psi_0)^{-1}(y)\big)\frac{\del}{\del\th_j},
\quad y\in P\setminus\del P.
\end{equation}
Therefore the Hamiltonian flow of $X^{\o_{\vp_0}}_{\dot\vp_0}$ is given, in terms of the moment coordinates,
by
\begin{equation}
\label{HamFlowToricPullbackEq}
\nabla\psi_0\circ\exp t X^{\o_{\vp_0}}_{\dot\vp_0}\circ(\nabla\psi_0)^{-1}.(y,\th)
=
(y,\th-t\nabla_y\dot\vp_0\circ(\nabla\psi_0)^{-1}), \quad \h{\ over\ \ $(P\setminus\del
P)\times(S^1)^n$},
\end{equation}
and in terms of the coordinates on $M_\open$ by
$$
\exp t X^{\o_{\vp_0}}_{\dot\vp_0}.(x,\th)
=
(x,\th-t(\nabla^2\psi_0)^{-1}\nabla_x\dot\vp_0).
$$
It therefore admits a holomorphic extension to a map $\exp \i\tau X^{\o_{\vp_0}}_{\dot\vp_0},
\tau=s+\i t$, given in these coordinates by (using (\ref{GradientLegendreTransformEq})-(\ref{HessianLegendreTransformEq}))
\begin{equation}
\label{HamFlowOpenOrbitEq}
\begin{aligned}
\exp -\i\tau X^{\o_{\vp_0}}_{\dot\vp_0}.(x,\th)
& =
(x-s(\nabla^2\psi_0)^{-1}\nabla_x\dot\vp_0\,,\;\th-t(\nabla^2\psi_0)^{-1}\nabla_x\dot\vp_0),
\cr
& 
\qquad\qquad\qquad
\qquad\qquad\qquad
\qquad\qquad
s\in\RR_+,t\in\RR.
\end{aligned}
\end{equation}
For each $z\in M_\open$, this is a holomorphic map of $S_\infty$ into $M_\open\subset M$
since in terms of the complex coordinates $z_j:=x_j+\i\th_j$
it is given by an affine map
$$
\tau\mapsto z-\tau(\nabla^2\psi_0)^{-1}\nabla_{x}\dot\vp_0,\quad \tau\in S_\infty.
$$

It remains to consider orbits of points $z\in M\setminus M_\open$ (for these points
Equation (\ref{HamFlowOpenOrbitEq}) is not valid), and this essentially
amounts to some toric bookkeeping.
Let $F\subset\partial P$ be a codimension $k$ face of $P$
cut out by the equations (see (\ref{PdefEq}))
\begin{equation}
\label{FFaceDefEq}
F:=\{y\in\del P\,:\,\langle y, v_{j_i}\rangle=\lambda_{j_i},\quad i=1,\ldots,k\},
\end{equation}
and assume that $z$ corresponds to a point in the interior of $F$. More precisely,
assume that for a sequence of points $\{z_i\}\subset M_\open$ converging to $z$
the points $\nabla\psi_0(z_i)$ converge to a point in the interior of $F$.
On points in $M$ that correspond to points in $F\setminus \del F$ the stabilizer of
the $(S^1)^n$-action action is $k$-dimensional.
In other words, when restricted to $F\setminus \del F$,
the vector fields $\frac{\del}{\del\th_1},\ldots,\frac{\del}{\del\th_n}$
span an $(n-k)$-dimensional distribution.
Without loss of generality we may assume that in (\ref{FFaceDefEq}) we have
$\{j_i,\ldots,j_k\}=\{1,\ldots,k\}$ (otherwise rename the labels).
Let $p\in\del F$ be a vertex and let $u_{p,1},\ldots,u_{p,n}$ be the vector
defining the edges emanating from $p$, as in (\ref{UpjEdgesDef}). Without
loss of generality assume the vectors $u_{p,1},\ldots,u_{p,n-k}$ span $F$.
On $F\setminus\del F$ the vectors $\{v_1,\ldots,v_k,u_{p,1},\ldots,u_{p,n-k}\}$
span $\RR^n$ and one may find $n-k$ unit vectors $\tilde u_{p,1},\ldots,\tilde u_{p,n-k}$
such that $\{v_1,\ldots,v_k,\tilde u_{p,1},\ldots,\tilde u_{p,n-k}\}$ form an
orthonormal basis. Let $U$ denote the orthogonal matrix obtained
from these $n$ column vectors. Let $\tilde y:=yU$ and $\tilde\th:=\th U$.
Then in these coordinates (\ref{SymplecticFormMomentCoordsEq}) becomes
\begin{equation}
\label{SymplecticFormAdaptedMomentCoordsEq}
(\nabla\psi_0)_\star\o_{\vp_0}=\sum_{j=1}^n d\tilde y_j\w d\tilde\th_j,\quad
\h{\ over\ \ $(P\setminus\del P)\times(S^1)^n$}.
\end{equation}
The advantage of this formula is that it specializes to the following formula
when restricted to $F\setminus \del F$:
\begin{equation}
\label{SymplecticFormAdaptedMomentCoordsRestrictionEq}
\o_{\vp_0}|_{(F\setminus \del F)\times(S^1)^{n-k}}
=
\sum_{j=k+1}^{n} d\tilde y_j\w d\tilde\th_j.
\end{equation}
Hence, in these coordinates the Hamiltonian flow of $\dot\vp_0$ is given by
$$
(\tilde y,\tilde\th)
\mapsto
(\tilde y,\tilde\th_{1},\ldots,\tilde\th_k,
\tilde\th_{k+1}+t\nabla_{\tilde y_{k+1}}\dot u_0,
\ldots,
\tilde\th_{n}+t\nabla_{\tilde y_{n}}\dot u_0).
$$
In order to describe the complexification of this map in $M$,
we use local holomorphic slice-orbit coordinates
$(z',z'')\in\CC^k\times(\CC^\star)^{n-k}$ (see, e.g., \cite{SoZ1})
that can be described as follows. The stabilizer of $(\CC^\star)^n$ at $z$ is $(\CC^\star)^k$.
The tangent space $T_zM$ decomposes to the tangent space to the orbit of $z$, $T_z((\CC^\star)^{n-k}.z)$,
and its normal $(T_z((\CC^\star)^{n-k}.z))^{\perp}$. Intersecting each of these spaces with the unit
ball in $T_zM$ we therefore obtain local holomorphic coordinates $(z',z'')\in\CC^k\times(\CC^\star)^{n-k}$.
The coordinates $z'$ are called the slice coordinates, while the $z''$ are called the orbit coordinates.
We may write $z''_j=\tilde x_j/2+\i\tilde\th_j, \, j=k+1,\ldots,n$, with $\tilde x_j=\nabla u_0(\tilde y_j)$.
On $F\setminus \del F$ the matrix $\nabla_{\tilde y}u$ is of rank $n-k$ with the bottom
$(n-k)\times(n-k)$ block an invertible matrix. 
The same reasoning as before now shows that we have
a formula analogous to (\ref{HamFlowOpenOrbitEq}) 
where we replace $(\nabla^2_x\psi)^{-1}$
by that block of $\nabla^2_{\tilde y}u$,
and $\nabla_x\dot\vp_0$ by $(\nabla_{\tilde x_{k+1}}\dot\vp_0,\ldots,\nabla_{\tilde x_{n}}\dot\vp_0)$.
Once again we see that the resulting maps extend
to the strip $S_\infty$, and this concludes the proof of the Lemma.
\end{proof}

\begin{remark}
\label{ToricNoCommutativityRemark}
{\rm
As the Lemma shows, the Hamiltonian orbits admit an analytic
continuation to the whole upper half plane. In relation to
Remark \ref{NoGroupLawRemark}, we point out that nevertheless
the Moser maps do not generically obey a group law in the holomorphic variable $\tau$.
To see this,  
change variables to the action-angle variables $(y, \theta)$. 
Since $y=y(x)$,
$X^{\o_{\vp_0}}_H = -\sum_j \frac{\partial H}{\partial I_j}
\frac{\partial}{\partial \theta_j}$
and
$J X^{\o_{\vp_0}}_H =  -\sum_j \frac{\partial H}{\partial I_j}
\frac{\partial}{\partial x_j}$ 
(with a slight abuse of notation as compared to \eqref{PushForwardHamVectorFieldEq}).
Then
$$
[X^{\o_{\vp_0}}_H, J X^{\o_{\vp_0}}_H ] 
= -  \sum_{j, k} \frac{\partial H}{\partial I_k}
\frac{\partial^2 H}{\partial x_k \partial I_j}
\frac{\partial}{\partial \theta_j},
$$
vanishing only if the matrix 
$\begin{pmatrix} \frac{\partial^2 H}{\partial x_k \partial I_j} \end{pmatrix}$ 
has a kernel, which is generically false. 
}
\end{remark}

\subsection{Moser flows on toric manifolds}
\label{MoserFlowsSubsection}

Having derived an explicit expression for the analytic continuations of the Hamiltonian
orbits for all imaginary time, we now turn to investigate the invertibility of the
resulting Moser maps.

\begin{lem}
\label
{InvertibilityMoserMapsToricLemma}
Let $(M,J,\o)$ be a toric \K manifold. Given a toric
\K potential $\vp_0$ let $\psi_0$ be a smooth strictly convex function on $\RR^n$
such that over the open orbit $\o_{\vp_0}=\i\ddbar\psi_0$,
and let
$\dot\vp_0$ be a smooth torus-invariant function on $M$.
The Moser maps $f_s(z)=\exp-\i sX_{\dot\vp_0}^{\o_{\vp_0}}.z$ defined by
Lemma \ref{AnalyticContinuationHamOrbitsToricLemma} are smoothly invertible
if and only if
\begin{equation}
\label{ToricTspanSecondEq}
s<
T_\span^\cvx:=
\sup\,\{\,a>0: \psi^\star_0-a\dot \vp_0\circ(\nabla\psi_0)^{-1} \hbox{\rm\ is convex}\}.
\end{equation}

\end{lem}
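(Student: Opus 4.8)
The plan is to make the Moser maps completely explicit via Lemma~\ref{AnalyticContinuationHamOrbitsToricLemma}, recognize them as the gradient maps of a one–parameter family of candidate symplectic potentials, and then identify the invertibility range with the convex lifespan. First I would set $g:=\dot\vp_0\circ(\nabla\psi_0)^{-1}$ --- a smooth function on the closed polytope $P$, since a smooth torus--invariant function on the compact $M$ pulls back to a smooth function on $P$ --- and $u_s:=\psi_0^\star-s\,g$. Over $M_\open$ in the logarithmic coordinates $(x,\th)$, Lemma~\ref{AnalyticContinuationHamOrbitsToricLemma} gives $f_s(x,\th)=\bigl(x-s(\nabla^2\psi_0)^{-1}\nabla_x\dot\vp_0,\ \th\bigr)$. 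Since $\dot\vp_0(x)=g(\nabla\psi_0(x))$, the chain rule gives $(\nabla^2\psi_0)^{-1}\nabla_x\dot\vp_0=\nabla g\circ\nabla\psi_0$, and then the Legendre identity \eqref{GradientLegendreTransformEq} (namely $\nabla\psi_0^\star=(\nabla\psi_0)^{-1}$) yields $f_s|_{M_\open}=\bigl(\nabla u_s\circ\nabla\psi_0,\ \th\bigr)$; in particular the differential of $f_s$ at a point lying over $y=\nabla\psi_0(x)$ is, in these coordinates, block--triangular with diagonal blocks $\nabla^2u_s(y)\,\nabla^2\psi_0(x)$ and the identity. The same identities hold face by face on the divisor at infinity, in the slice--orbit coordinates of Lemma~\ref{AnalyticContinuationHamOrbitsToricLemma}, with $\psi_0^\star$ replaced by its restriction to the relevant face (the symplectic potential of the corresponding toric submanifold), following the toric bookkeeping at the end of that proof.

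The second step converts the invertibility of $f_s$ into a statement about $u_s$. Since $\psi_0$ has positive--definite Hessian and $\overline{\Im\nabla\psi_0}=P$, the moment map $\nabla\psi_0\colon\RR^n\to P^\circ$ is a diffeomorphism, and $\nabla^2\psi_0^\star\succ0$ on $P^\circ$ by \eqref{HessianLegendreTransformEq}. Reading off Step~1, $f_s$ preserves $M_\open$ and each divisor stratum, and $f_s$ is a smooth diffeomorphism of $M$ if and only if $u_s$ is strictly convex on $P^\circ$ with $\nabla u_s\colon P^\circ\to\RR^n$ a diffeomorphism onto $\RR^n$ --- that is, if and only if $u_s$ is the symplectic potential of a toric \K metric in $[\o]$. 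Here the surjectivity of $\nabla u_s$ and the extendibility of $f_s$ across the divisor are governed by the behaviour of $u_s$ at $\partial P$; since $u_s-\psi_0^\star=-s\,g$ with $g\in C^\infty(P)$, the canonical singular part of $u_s$ along $\partial P$ is that of $\psi_0^\star$, so the only condition at issue is whether $u_s$ is strictly convex (including at $\partial P$ in the appropriate transverse sense). (These are standard facts on toric \K manifolds; cf.\ \cite{R,RZ1} and references therein.)

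It then remains to identify the set of $s$ for which $u_s$ is a genuine symplectic potential with $[0,T_\span^\cvx)$. For $s<T_\span^\cvx$ pick $s'\in(s,T_\span^\cvx)$; since $a\mapsto u_a$ is affine, $u_s=(1-\tfrac s{s'})\psi_0^\star+\tfrac s{s'}u_{s'}$ is a convex combination of the genuine symplectic potential $\psi_0^\star$ and the convex function $u_{s'}$, so $\nabla^2u_s\succeq(1-\tfrac s{s'})\nabla^2\psi_0^\star$ everywhere --- on $P^\circ$ and in the transverse sense along $\partial P$ --- whence $u_s$ is again a genuine symplectic potential and $f_s$ is invertible by Step~2. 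Conversely, for $s>T_\span^\cvx$ the function $u_s$ is not convex, so $\nabla^2u_s$ fails positive semidefiniteness at an interior point and $f_s$ is not an immersion there; and at the endpoint $s=T_\span^\cvx$, $u_s$ is convex (convexity being a closed condition and $a\mapsto u_a$ continuous) but cannot be a genuine symplectic potential, for otherwise, using compactness of $P$ and a uniform lower bound for the interior and transverse Hessians of $u_{T_\span^\cvx}$ near $\partial P$, the function $u_{T_\span^\cvx+\epsilon}$ would remain convex for small $\epsilon>0$, contradicting the definition of $T_\span^\cvx$; hence $u_{T_\span^\cvx}$ degenerates either at an interior point of $P$ or transversally along $\partial P$, and in either case $f_{T_\span^\cvx}$ fails to be a local diffeomorphism (at that interior point, or along the divisor). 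The main obstacle is precisely this endpoint --- the transverse Hessian analysis along $\partial P$ that distinguishes ``convex'' from ``strictly convex'' at $s=T_\span^\cvx$ --- which is exactly the study of the convex lifespan carried out in \cite[Theorem~1, Proposition~1]{RZ2} and may alternatively be quoted directly from there, completing the proof.
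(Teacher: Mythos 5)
Your route is essentially the paper's: restrict to the open orbit, use the Legendre identities to write $f_s$ as $\nabla u_s\circ\nabla\psi_0$ with $u_s=\psi_0^\star-s\,\dot\vp_0\circ(\nabla\psi_0)^{-1}$ (the paper does the same computation in moment coordinates, cf.\ \eqref{MoserMapOpenOrbitMomentCoordEq}--\eqref{SecondAllTimeFsToricMomentEq}), note that $df_s$ is block-diagonal with block $\nabla^2u_s\cdot\nabla^2\psi_0$, reduce invertibility to strict convexity of $u_s$ via \eqref{GradientLegendreTransformEq}, and treat the divisor strata by the slice--orbit coordinates of Lemma \ref{AnalyticContinuationHamOrbitsToricLemma}. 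You supply detail the paper leaves implicit (global rather than merely local invertibility, and the endpoint $s=T_\span^\cvx$), which is fine and correct in substance.

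One inference, as written, does not follow: for $s>T_\span^\cvx$ you say that $\nabla^2u_s$ ``fails positive semidefiniteness at an interior point and $f_s$ is not an immersion there.'' A symmetric matrix with a negative eigenvalue can perfectly well be nondegenerate, so loss of convexity at a point does not by itself make $df_s$ singular at that point; and the natural intermediate-value observation in $s$ (that $v^{T}\nabla^2u_a v$ vanishes for some $a^*<s$) only degenerates $f_{a^*}$, not the given $f_s$. The repair uses exactly the structure you already invoke at the endpoint: if $\det\nabla^2u_s\neq0$ on all of the connected set $P\sm\del P$, then the signature of $\nabla^2u_s$ is constant there; near a vertex (or in the directions normal to a facet) $\nabla^2\psi_0^\star$ blows up while $s\,\nabla^2\bigl(\dot\vp_0\circ(\nabla\psi_0)^{-1}\bigr)$ stays bounded, so $\nabla^2u_s>0$ there, hence everywhere, i.e.\ $u_s$ is convex, contradicting $s>T_\span^\cvx$. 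With that short insertion your converse direction closes; in fairness, the paper's own proof asserts ``the gradient of $f_s$ is invertible if and only if $u_0+s\dot u_0$ is strictly convex'' with the same brevity, so this is a gap relative to full rigor rather than a divergence from the paper's argument.
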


Note that the formula for $T^\cvx_\span$ is well-defined independently of
the choice of the open-orbit \K potential $\psi_0$ for $\o_{\vp_0}$.

\begin{proof}
From the proof of Lemma \ref{AnalyticContinuationHamOrbitsToricLemma}  (cf.\eqref{HamFlowOpenOrbitEq})  we have the following formula
for the Moser maps, restricted to the open orbit,
\begin{equation}
\label{MoserMapOpenOrbitEq}
f_s(z)=z-s(\nabla^2\psi_0)^{-1}\nabla_{x}\dot\vp_0,\quad \tau\in S_\infty, z\in M_\open,
\end{equation}
or in terms of the moment coordinates
\begin{equation}
\label{MoserMapOpenOrbitMomentCoordEq}
f_s(\nabla u_0(y))
=
\nabla_y u_0(y)
+
s\nabla_{y}\dot u_0,\quad s\in \RR_+, y\in P\setminus\partial P.
\end{equation}
Since $\nabla_y=\nabla^2_yu_0.\nabla_x$,
applying the gradient with respect to $y$ to this equation we obtain
$$
\nabla^2 u_0(y). \nabla_x f_s(\nabla u_0(y))
=
\nabla^2_y(u_0+s\dot u_0).
$$
Since $\nabla^2 u_0$ is invertible for $y\in P\setminus\del P$, it follows that the gradient of
$f_s$ is invertible at $z\in M_\open$ if and only if $u_0+s\dot u_0$ is strictly convex on $P\setminus\del P$.
The analysis for $z\in M\setminus M_\open$ is similar, following the technicalities outlined in the
proof of Lemma \ref{AnalyticContinuationHamOrbitsToricLemma}.
Since by definition $u_0=\psi^\star_0$ and 
using (\ref{GradientLegendreTransformEq})
we obtain (\ref{ToricTspanSecondEq}).
\end{proof}

This concludes the proof of Proposition \ref{ToricLifespanProp}.

\section{Leafwise subsolutions for HRMA}
\label
{OptimalSubsolutionHRMASection}

The toric setting is special
in that first the Moser maps exist for all $s\ge0$, and second
that $\o$ admits a \K potential on the whole open orbit $M_\open$.
As in the discussion below  \eqref{CauchyLaplaceLeaveLocalEq}, the Cauchy problem  takes the following form:
\begin{equation}
\label
{CauchyLaplaceLeaveToricEq}
\begin{aligned}
\Delta\chi_z
& =
0,\quad \h{\ on \ } S_\infty,
\cr
\chi_z(\i t)
& = 
\psi_0\circ f_{\i t}(z),\quad \h{\ on \ } \del S_\infty,
\cr\dis
\frac{\del\chi_z}{\del s}(\i t)
& = 
\dot\vp_0\circ f_{\i t}(z)
-
\nabla_{g_{\vp_0}}\dot\vp_0(\psi_0)\circ f_{\i t}(z), \quad \h{\ on \ } \del S_\infty.
\end{aligned}
\end{equation}

We now turn to proving that the HRMA (\ref{HRMARayEq}) admits a 
unique leafwise subsolution. 

\begin{proof}[Proof of Proposition \ref{OptimalSubsolutionToricProp}]

First, we record some useful formulas for the Moser maps on a toric
manifold. 
They follow from the proof of Lemma \ref{AnalyticContinuationHamOrbitsToricLemma}
by substituting $y=\nabla\psi_0$ in (\ref{MoserMapOpenOrbitMomentCoordEq}) and using
(\ref{GradientLegendreTransformEq}).

\begin{lem}
\label{MoserPathFormulaToricCor}
Let $\psi_s$ be a smooth solution of the HRMA (\ref{HRMARayEq}), and
let $f_s$ denote the associated Moser diffeomorphisms given by
Lemma \ref{AnalyticContinuationHamOrbitsToricLemma}. Then on the open-orbit,
\begin{equation}
\label{SecondFsToricMomentEq}
f^{-1}_s
=
(\nabla\psi_0)^{-1}\circ\nabla\psi_s
=
\nabla u_0\circ(\nabla u_s)^{-1},\quad s\in[0,T^\cvx_\span),
\end{equation}
and if we let $u_s(y)=u_0(y)+s\dot u_0(y)$, then
\begin{equation}
\label{SecondAllTimeFsToricMomentEq}
f_s
=
\nabla u_s\circ(\nabla u_0)^{-1}
,\quad \h{\ all \ } s\ge0.
\end{equation}
These expressions remain valid globally on $M$ if we use the Euclidean gradient
in the orbit coordinates $\tilde x$ along each slice.

\end{lem}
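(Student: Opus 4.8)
The plan is to read off both identities from the explicit description of the complexified Hamiltonian flow obtained in the proof of Lemma~\ref{AnalyticContinuationHamOrbitsToricLemma} --- concretely, the moment-coordinate formula \eqref{MoserMapOpenOrbitMomentCoordEq} --- combined with the Legendre-duality relations \eqref{GradientLegendreTransformEq}--\eqref{VariationPotentialEq}, and, for the first formula only, the identification of a smooth solution of the HRMA with the Legendre potential of the prequel \cite{RZ2}.

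First I would establish \eqref{SecondAllTimeFsToricMomentEq}, which is purely kinematic: it involves only the Cauchy data and the affine path $u_s:=u_0+s\dot u_0$ in the space of symplectic potentials, not any solution. From \eqref{MoserMapOpenOrbitMomentCoordEq}, namely $f_s(\nabla u_0(y))=\nabla_y u_0(y)+s\nabla_y\dot u_0$ for $y\in P\setminus\partial P$, the right-hand side is $\nabla_y u_s(y)$; since $\nabla u_0=(\nabla\psi_0)^{-1}$ by \eqref{GradientLegendreTransformEq}, substituting $y=\nabla\psi_0(z)$ gives $f_s(z)=\nabla u_s(\nabla\psi_0(z))=\bigl(\nabla u_s\circ(\nabla u_0)^{-1}\bigr)(z)$ on $M_\open$. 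This holds for every $s\ge0$, since on a toric manifold the Moser maps are defined for all $s$ by Lemma~\ref{AnalyticContinuationHamOrbitsToricLemma}; one only uses that $\psi_0$ is smooth and strictly convex, so that $\nabla u_0$ is a diffeomorphism of $P\setminus\partial P$ onto $\RR^n$.

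Next I would deduce \eqref{SecondFsToricMomentEq}. Inverting the composition just obtained yields $f_s^{-1}=(\nabla\psi_0)^{-1}\circ(\nabla u_s)^{-1}=\nabla u_0\circ(\nabla u_s)^{-1}$, which by Lemma~\ref{InvertibilityMoserMapsToricLemma} is meaningful precisely for $s\in[0,T^\cvx_\span)$ --- the range on which $u_s$ is strictly convex, so that $(\nabla u_s)^{-1}=\nabla(u_s^\star)$. It remains to identify $(\nabla u_s)^{-1}$ with $\nabla\psi_s$. For this I invoke that a smooth solution $\psi_s$ of the HRMA \eqref{HRMARayEq} with the given Cauchy data coincides with the Legendre potential $\psi_L(s,\cdot)=(\psi_0^\star-s\dot\vp_0\circ(\nabla\psi_0)^{-1})^\star$ of \cite[Theorem~1]{RZ2}; a sign check using \eqref{VariationPotentialEq} --- which gives $\dot\vp_0\circ(\nabla\psi_0)^{-1}=\dot\psi_0\circ(\nabla\psi_0)^{-1}=-\dot u_0$ --- shows $\psi_0^\star-s\dot\vp_0\circ(\nabla\psi_0)^{-1}=u_0+s\dot u_0=u_s$, whence $\psi_s=u_s^\star$ and $\nabla\psi_s=(\nabla u_s)^{-1}$. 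Combining, $f_s^{-1}=(\nabla\psi_0)^{-1}\circ\nabla\psi_s=\nabla u_0\circ(\nabla u_s)^{-1}$ on $M_\open$.

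Finally, the validity of both formulas on $M\setminus M_\open$ follows verbatim from the toric bookkeeping in the proof of Lemma~\ref{AnalyticContinuationHamOrbitsToricLemma}: in the slice-orbit coordinates $(z',z'')\in\CC^k\times(\CC^\star)^{n-k}$ adapted to a face $F$ of $P$, one replaces $\nabla^2\psi_0$ and $\nabla u_0$ by their lower-right $(n-k)\times(n-k)$ blocks in the orbit coordinates $\tilde x$ along the slice and repeats the computation. I do not expect a genuine obstacle: the only step with real content is the identification $\psi_s=u_s^\star$, which is recorded in \cite{RZ2}; everything else is substitution of the Legendre-duality identities.
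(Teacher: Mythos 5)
Your proposal is correct and follows essentially the same route as the paper, whose proof is the one-line observation that the formulas follow from \eqref{MoserMapOpenOrbitMomentCoordEq} in the proof of Lemma \ref{AnalyticContinuationHamOrbitsToricLemma} upon substituting $y=\nabla\psi_0$ and using \eqref{GradientLegendreTransformEq}. You merely make explicit the step the paper leaves implicit, namely the identification $\psi_s=u_s^\star$ (hence $\nabla\psi_s=(\nabla u_s)^{-1}$ for $s<T^\cvx_\span$) via the Legendre-transform solution and uniqueness from \cite{RZ2}, together with the sign convention $\dot u_0=-\dot\psi_0\circ(\nabla\psi_0)^{-1}$ from \eqref{VariationPotentialEq}.
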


Observe that \eqref{SecondFsToricMomentEq}
and (\ref{SecondAllTimeFsToricMomentEq}) 
are in agreement with Proposition \ref{ToricLifespanProp} (i),(ii), respectively.

Next, we show that each of the Cauchy problems \eqref{CauchyLaplaceLeaveToricEq}
admits a unique global smooth solution.
In the toric setting the harmonic extension to the generalized leaves
of the foliation is especially simple since the initial
conditions are constant on the boundary of the strip.
In particular, the harmonic functions must be linear along the leaves of the foliation.

\begin{lem}
\label{ToricHarmonicAlongLeavesProp}
For every $z\in M_\open$ the Cauchy problem for the Laplace equation
\eqref{CauchyLaplaceLeaveToricEq}
admits a unique smooth solution, given by 
\begin{equation}
\label{LinearSolutionAlongLeavesHRMAEq}
\chi_z(\tau):=
\langle \nabla\psi_0(z),\nabla(u_0+s\dot u_0)\circ\nabla\psi_0(z)\rangle
-
(u_0+s\dot u_0)\circ\nabla\psi_0(z).
\end{equation}
\end{lem}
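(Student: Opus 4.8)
The plan is to use torus invariance to reduce the leafwise Cauchy problem \eqref{CauchyLaplaceLeaveToricEq} to one whose boundary data are constant along $\del S_\infty$, to solve that problem by the manifestly harmonic function affine in $s$, and then to identify that affine function with the right-hand side of \eqref{LinearSolutionAlongLeavesHRMAEq} by unwinding the Legendre-duality identities \eqref{GradientLegendreTransformEq}--\eqref{VariationPotentialEq}.

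First I would check that all three data in \eqref{CauchyLaplaceLeaveToricEq} are independent of $t$. By Lemma~\ref{AnalyticContinuationHamOrbitsToricLemma} (cf.\ \eqref{HamFlowOpenOrbitEq}) the real Hamiltonian flow $f_{\i t}=\exp tX^{\o_{\vp_0}}_{\dot\vp_0}$ acts on $M_\open$ by $(x,\th)\mapsto(x,\th-t(\nabla^2\psi_0)^{-1}\nabla_x\dot\vp_0)$, hence fixes the $x$-coordinate, equivalently the moment value $\nabla\psi_0$. Since $\psi_0$, $\dot\vp_0$, $g_{\vp_0}$ and therefore $\nabla_{g_{\vp_0}}\dot\vp_0$ are all torus invariant, each of $\psi_0\circ f_{\i t}(z)$, $\dot\vp_0\circ f_{\i t}(z)$ and $\nabla_{g_{\vp_0}}\dot\vp_0(\psi_0)\circ f_{\i t}(z)$ equals its value at $z$; write $c_0=c_0(z)$ for the Dirichlet datum and $c_1=c_1(z):=\dot\vp_0(z)-d\psi_0(\nabla_{g_{\vp_0}}\dot\vp_0)(z)$ for the Neumann datum. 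Then $\chi_z(s+\i t):=c_0+c_1 s$ is harmonic and realises the data, which gives existence. For uniqueness, the difference of two smooth solutions is harmonic on $S_\infty$ with vanishing Cauchy data on $\{s=0\}$; after an odd reflection across $\{s=0\}$ this becomes a harmonic, hence real-analytic, function on a neighbourhood of the line $\{s=0\}$ with vanishing Cauchy data there, so it vanishes near $\{s=0\}$ (by Holmgren's theorem, or directly since $\del_s^2=-\del_t^2$ forces all $s$-derivatives to vanish on $\{s=0\}$) and then identically by unique continuation; alternatively, one exhausts $S_\infty$ by finite strips and invokes Lemma~\ref{LaplaceUniquenessLemma}.

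Finally I would match $c_0+c_1 s$ with the right-hand side of \eqref{LinearSolutionAlongLeavesHRMAEq}. Both are affine in $s$, so it suffices to compare the $s=0$ terms and the coefficients of $s$. Using $\nabla u_0=(\nabla\psi_0)^{-1}$ from \eqref{GradientLegendreTransformEq} and the Legendre relation $\psi_0(z)+u_0(\nabla\psi_0(z))=\langle z,\nabla\psi_0(z)\rangle$, the $s=0$ term of \eqref{LinearSolutionAlongLeavesHRMAEq} collapses to $\psi_0(z)=c_0$. For the linear term, \eqref{VariationPotentialEq} applied to $u_s=u_0+s\dot u_0$, $\psi_s=u_s^\star$, gives $\dot u_0\circ\nabla\psi_0=-\dot\vp_0$, and differentiating $\dot u_0=-\dot\vp_0\circ(\nabla\psi_0)^{-1}$ together with \eqref{HessianLegendreTransformEq} gives $\nabla\dot u_0\circ\nabla\psi_0(z)=-(\nabla^2\psi_0)^{-1}(z)\nabla_x\dot\vp_0(z)$; hence the coefficient of $s$ in \eqref{LinearSolutionAlongLeavesHRMAEq} equals $\dot\vp_0(z)-\langle\nabla\psi_0(z),(\nabla^2\psi_0)^{-1}\nabla_x\dot\vp_0\rangle$. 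This equals $c_1$ once one observes that $\nabla_{g_{\vp_0}}\dot\vp_0=(\nabla^2\psi_0)^{-1}\nabla_x\dot\vp_0$ in the $x$-components (the $\th$-components vanish by torus invariance), which follows by comparing $\left.\frac{df_\tau}{ds}\right|_{s=0}=-\nabla_{g_{\vp_0}}\dot\vp_0\circ f_{\i t}$ from \eqref{FlowLeavesEqs} with the explicit Moser map \eqref{MoserMapOpenOrbitEq}. Most of the work is bookkeeping with the Legendre transform; the only mildly delicate point is the uniqueness statement on the non-compact half-plane $S_\infty$, handled by the reflection/Holmgren argument above (or by exhaustion plus Lemma~\ref{LaplaceUniquenessLemma}), and I do not expect either piece to present a serious obstacle.
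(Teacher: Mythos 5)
Your proof is correct and, in its existence/verification half, is essentially the paper's own argument: you observe that by torus invariance the Cauchy data in \eqref{CauchyLaplaceLeaveToricEq} are independent of $t$ (the paper phrases this as ``one may eliminate $f_{\i t}$''), note that a function affine in $s$ and independent of $t$ is harmonic, and then identify the Dirichlet and Neumann data with the $s=0$ term and the $s$-coefficient of \eqref{LinearSolutionAlongLeavesHRMAEq} via the Legendre identities \eqref{GradientLegendreTransformEq}--\eqref{VariationPotentialEq}, exactly as in the paper (your extra step of reading off $\nabla_{g_{\vp_0}}\dot\vp_0=(\nabla^2\psi_0)^{-1}\nabla_x\dot\vp_0$ from \eqref{FlowLeavesEqs} and \eqref{MoserMapOpenOrbitEq} is a legitimate substitute for computing the metric directly). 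Where you genuinely diverge is uniqueness: the paper invokes ``a suitable generalization of Lemma \ref{LaplaceUniquenessLemma} to the case $T=\infty$,'' i.e.\ a Widder-type representation on the half-plane, which implicitly carries a boundedness/growth hypothesis, whereas your odd reflection across $\{s=0\}$ followed by real-analyticity (all normal derivatives vanish on the line via $\del_s^2=-\del_t^2$, hence the extension vanishes identically) is self-contained, needs only smoothness up to the boundary, and requires no growth condition at all -- arguably a cleaner route. One small caveat: your fallback of exhausting $S_\infty$ by finite strips and citing Lemma \ref{LaplaceUniquenessLemma} is not quite automatic, since that lemma assumes $u\in C^2\cap L^\infty(S_T)$ and a smooth solution on a strip unbounded in $t$ need not be bounded; so the reflection/Holmgren argument is the one that actually closes the proof, and it does.
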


\begin{proof}
Note first that uniqueness
holds for the Cauchy problem for the Laplace equation on
a half-plane (this can be obtained from a suitable
generalization of Lemma \ref{LaplaceUniquenessLemma} to
the case $T=\infty$).
We claim that a solution to \eqref{CauchyLaplaceLeaveToricEq} 
is given by (\ref{LinearSolutionAlongLeavesHRMAEq}). 
First, $\chi_z$ is linear in $s$ and independent of $t$, hence harmonic.
Moreover,
by \eqref{GradientLegendreTransformEq},
$$
\chi_z(\i t)
=
\langle \nabla\psi_0(z),\nabla u_0\circ\nabla\psi_0(z)\rangle
-
u_0\circ\nabla\psi_0(z)
= u_0^\star(z)=\psi_0(z),
$$
and by \eqref{HessianLegendreTransformEq} and \eqref{VariationPotentialEq},
\begin{equation}
\begin{aligned}
\frac{\del\chi_z}{\del s}(\i t)
& = 
\langle \nabla\psi_0(z),\nabla\dot u_0\circ\nabla\psi_0(z)\rangle
-
\dot u_0\circ\nabla\psi_0(z)
\cr
& = 
\langle\nabla\psi_0(z),-\nabla^2\psi_0.\nabla\dot\vp_0\rangle
+\dot\vp_0(z)
\cr
& =
-g_{\vp_0}(\nabla\psi_0(z),\nabla\dot\vp_0)
+\dot\vp_0(z)
\cr
& =
-\nabla_{g_{\vp_0}}\dot\vp_0(\psi_0)(z)+\dot\vp_0(z).
\end{aligned}
\end{equation}
Finally, observe that
in (\ref{CauchyLaplaceLeaveToricEq}) one may eliminate 
$f_{\i t}$ since the data
$(\psi_0,\dot\vp_0)$ is $(S^1)^n$-invariant.
Thus, $\chi_z$ satisfies the initial conditions.
\end{proof}

\begin{remark}
{\rm
To see how this Lemma fits in with Proposition \ref{LeafwiseProp},
note that $\Phi_z=\gamma_z^\star(\psi_0-\vp_0)$.
Thus, $p_z(t)=-\del_s\Phi_z=d(\psi_0+\vp_0)(\nabla_{g_{\vp_0}}\dot\vp_0)(\Gamma_z(\i t))$,
and $q_z-p_z=\dot\vp_0(\Gamma_z(\i t))-d\psi_0(\nabla_{g_{\vp_0}}\dot\vp_0)(\Gamma_z(\i t))$
is a constant (depending on $z$), and so naturally admits an analytic continuation
to a whole half-plane.
}
\end{remark}

We now turn to proving that the Cauchy problem 
admits a unique leafwise subsolution,
equal precisely to the Legendre transform potential $\psi_L$
given by \eqref{OptimalSubsolutionToricEq}. Note that we use
interchangeably $z$ and $x=\log|z|^2$, as $\psi_L$ is independent
of $\theta$.

To show that (\ref{OptimalSubsolutionToricEq}) defines a leafwise subsolution
on the open orbit 
it suffices to show that for every $z\in M_\open$ the function $F_z^\star \psi_L$
solves the Cauchy problem
(\ref{CauchyLaplaceLeaveToricEq}). 
Now,
$$
\psi_L(s,z)=u_s^\star(z)=\sup_{y\in P}[\langle y,x\rangle - u_s(y)],
$$
with the supremum achieved in at least one point $y$ that is contained in the
set $(\nabla u_s)^{-1}(z)$. It then follows from (\ref{SecondAllTimeFsToricMomentEq})
and \eqref{LinearSolutionAlongLeavesHRMAEq} that 
$F_z^\star \psi_L=u_s^\star\circ f_\tau(z)=\chi_z(s+\i t)$, and
thus by Lemma \ref{ToricHarmonicAlongLeavesProp} $\psi_L$ defines a leafwise subsolution.
This proves the existence part of Proposition \ref{OptimalSubsolutionToricProp} (i).

To prove the existence part of Proposition \ref{OptimalSubsolutionToricProp} (ii),
it suffices to note that every leafwise subsolution for the HRMA
(\ref{HRMARayEq}) on the open orbit gives rise to a global leafwise subsolution for the HCMA
(\ref{HCMARayEq}) by letting
$$
\vp(s+\i t,z)=\psi_L(s,z)-\psi_0(z).
$$
This can be seen as follows.
Note first that according to Lemma \ref{AnalyticContinuationHamOrbitsToricLemma}
the maps $F_z$ are smooth.
Second, note that according to our description of the Moser maps in orbit coordinates,
it follows that $f_\tau$ preserves the interior of each codimension $k$ toric
subvariety of the divisor at infinity $D$. And so, given $z\in D=M\setminus M_\open$,
the condition $F_z^\star(\pi_2^\star\o+\i\ddbar\vp)$ is equivalent to a Cauchy problem
for the Laplace equation, where we now let $N$ be the open toric variety obtained
as the interior of the codimension $k$ toric subvariety containing $z$. This Cauchy problem
then admits a unique smooth global solution, by working in orbit coordinates, as
in Lemma \ref{AnalyticContinuationHamOrbitsToricLemma}.
And since, as already noted, $F_z$ preserves $N$, the harmonicity of
$F_z^\star(\psi_N+\vp)$ implies that $F_z^\star(\pi_2^\star\o+\i\ddbar\vp)=0$ on $N$,
where here $\psi_N$ is a local \K potential for $\o$ on $N$.

Finally, we prove the uniqueness of the leafwise subsolution just constructed.
Let $\eta(\tau,z)$ be another leafwise subsolution. It will suffice to
prove that $\eta=\psi_L$ on the product of $S_\infty$ and the open-orbit $M_\open$.
Observe that by (\ref{SecondAllTimeFsToricMomentEq}),(\ref{LinearSolutionAlongLeavesHRMAEq}), 
and the fact that $\nabla\psi_0:\RR^n\ra P\setminus\del P$ is
an isomorphism we have
$$
\eta(\tau,\nabla u_s(y))=\langle y,\nabla u_s(y)\rangle-u_s(y).
$$
for every $y\in P\setminus \del P$ and $\tau\in S_\infty$. Since $\psi_L$ satisfies the same equation 
and by \cite[Lemma 7.1]{RZ2} $\Im\nabla u_s|_{P\setminus\partial P}=\Im\nabla u_0|_{P\setminus\partial P}=\RR^n$ it
follows that $\eta=\psi_L$.
\end{proof}

\begin{remark}
{\rm
As a by-product, Propositions
\ref{ToricLifespanProp} and \ref{OptimalSubsolutionToricProp}
give an alternative and conceptual proof that the Legendre
transform solves the homogeneous real Monge--Amp\`ere equation.  Of course, these results show considerably
more since they give information for all time, where the Legendre duality breaks down to some
extent. 
As studied in detail in \cite{RZ2}, the leafwise subsolution $\psi$ measures precisely 
the extent to which the Legendre duality breaks down.
}
\end{remark}

\section{HRMA and the Hamilton--Jacobi equation
}
\label{ConeLifespanSection}

We now turn to showing that there exists no admissible $C^1$ weak solution of the IVP for $T>T_\span^\infty$ and establishing the relation between the HRMA and the Hamilton--Jacobi equation.
By a weak solution we mean a solution in the sense of Alexandrov.  

The first step is the observation that any $C^1$ weak solution of HRMA
is a classical solution of a Hamilton--Jacobi equation. Some steps resemble the arguments of
 Proposition 13.1 in  \cite[\S13]{RZ2}.

Recall that the initial Neumann data 
 $\dot\psi_0$  of the HRMA \eqref{HRMARayEq}  is a bounded function on $\RR^n$ obtained by restricting 
the global Neumann data $\dot\vp_0$ on the toric manifold to the open-orbit.

\begin{proof}[Proof of Theorem \ref{HJThm}]

Given the Cauchy data $ (\psi_0, \dot\psi_0)$ of  \eqref{HRMARayEq}, we set
$$\dot u_0:=-\dot\psi_0\circ(\nabla\psi_0)^{-1}. $$

\begin{lem} 
\label
{GMapLemma}
Let $\eta$ be a $C^1$ admissible solution for the HRMA (\ref{HRMARayEq}).
Define the set-valued map,
$$
G:s\in\RR_+\mapsto \Im\,\nabla\eta(\{s\}\times\RR^n)\subset\RR^{n+1}.
$$
Then  $G(s)=G(0)=\{(-\dot u_0(y),y)\,:\, y\in P\sm\del P\}$, for each $s\in[0,T)$.
\end{lem}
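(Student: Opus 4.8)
The plan is to represent each slice image $G(s)$ as the graph of a continuous function over the open polytope $P\sm\del P$, and then to exploit the vanishing of the \MAno\ measure — here the statement that $\Im\nabla\eta$ has Lebesgue measure zero in $\RR^{n+1}$ — to rule out any $s$-dependence of that graph.

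First I would compute $G(0)$. Since $\nabla\eta(0,x)=(\partial_s\eta(0,x),\nabla_x\eta(0,x))=(\dot\psi_0(x),\nabla\psi_0(x))$ and $\nabla\psi_0\colon\RR^n\to P\sm\del P$ is a homeomorphism with $\overline{\Im\nabla\psi_0}=P$, substituting $y=\nabla\psi_0(x)$ and recalling $\dot u_0=-\dot\psi_0\circ(\nabla\psi_0)^{-1}$ gives $G(0)=\{(-\dot u_0(y),y):y\in P\sm\del P\}$ at once. Next, for each fixed $s\in[0,T)$, admissibility of $\eta$ together with strict convexity of $\eta(s,\cdot)$ implies, by standard convex analysis (as used in \cite[Lemma 7.1]{RZ2}), that $\nabla_x\eta(s,\cdot)\colon\RR^n\to P\sm\del P$ is a homeomorphism with inverse $\nabla(\eta(s,\cdot))^\star$ (which is continuous on $P\sm\del P$, being the gradient of a convex function differentiable on an open set). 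Hence $G(s)=\{(g_s(y),y):y\in P\sm\del P\}$, where $g_s(y):=\partial_s\eta\bigl(s,\nabla(\eta(s,\cdot))^\star(y)\bigr)$ is a continuous function of $y\in P\sm\del P$, and $g_0=-\dot u_0$. The lemma is thus equivalent to the assertion that $g_s\equiv g_0$ for all $s$.

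The second step is to record that, for each fixed $y\in P\sm\del P$, the function $s\mapsto g_s(y)$ is continuous on $[0,T)$. Since $\eta\in C^1$, one has $\eta(s,\cdot)\to\eta(s_*,\cdot)$ pointwise as $s\to s_*$, hence — pointwise convergence of finite convex functions on $\RR^n$ being locally uniform — locally uniformly; therefore $(\eta(s,\cdot))^\star\to(\eta(s_*,\cdot))^\star$ locally uniformly on $P\sm\del P$, and since the limit is differentiable there, $\nabla(\eta(s,\cdot))^\star(y)\to\nabla(\eta(s_*,\cdot))^\star(y)$; composing with the continuous function $\partial_s\eta$ gives the claimed continuity. (One checks similarly, via monotonicity of $\nabla\eta$, that $s\mapsto g_s(y)$ is in fact nondecreasing, though this is not needed.) The third and decisive step now uses that $\eta$ is an Alexandrov weak solution, i.e.\ $\Im\nabla\eta=\bigcup_{s\in[0,T)}G(s)$ has measure zero in $\RR^{n+1}$. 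Suppose $g_{s_1}(y_0)<g_{s_2}(y_0)$ for some $y_0\in P\sm\del P$ and $s_1<s_2$. Continuity of $g_{s_1}$ and $g_{s_2}$ in $y$ yields a ball $V\ni y_0$ inside $P\sm\del P$ and a $c>0$ with $g_{s_2}(y)-g_{s_1}(y)>c$ for $y\in V$; continuity of $s\mapsto g_s(y)$ and the intermediate value theorem then show that the fiber $\{\sigma:(\sigma,y)\in\Im\nabla\eta\}$ contains $[g_{s_1}(y),g_{s_2}(y)]$ for every $y\in V$. Hence $\Im\nabla\eta$ contains the measurable set $\{(\tau,y):y\in V,\ g_{s_1}(y)\le\tau\le g_{s_2}(y)\}$, of measure at least $\int_V(g_{s_2}-g_{s_1})\,dy\ge c\,|V|>0$, a contradiction. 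Therefore $g_s$ is independent of $s$, so $g_s\equiv g_0=-\dot u_0$ and $G(s)=G(0)$ for every $s\in[0,T)$.

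The main obstacle I anticipate is the second step: making the continuity in $s$ of the recentered gradient $g_s(y)$ fully rigorous — the passage from local uniform convergence of the convex functions $\eta(s,\cdot)$ to convergence of the gradients of their Legendre conjugates at an \emph{interior} point of the conjugate domain. This is exactly where the hypotheses ``admissible'' and ``$C^1$'' are used, and where one must verify that $y$ remains in the common interior $P\sm\del P$ of all the conjugate domains. By comparison, the computation of $G(0)$ and the measure-theoretic final step are routine.
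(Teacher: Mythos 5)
Your proof is correct and takes essentially the same route as the paper's: both represent each $G(s)$ as a continuous graph over $P\sm\del P$ via the homeomorphism $\nabla_x\eta(s,\cdot)$, use continuity in $s$, and derive a contradiction with the Alexandrov measure-zero condition because a varying graph would sweep out a set of positive Lebesgue measure in $\RR\times(P\sm\del P)$. The only difference is one of detail: you make the continuity of $s\mapsto G(s)$ rigorous through convergence of the Legendre conjugates and their gradients, and run the sweeping argument via the intermediate value theorem on $\sigma\mapsto g_\sigma(y)$, whereas the paper asserts the continuity of the set-valued map and argues with the supremum $S$ of parameters at which $G(\sigma)$ still meets $G(0)$.
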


\begin{proof}
Since $\eta$ is admissible,
$\nabla_x\eta(\RR^n)=P\sm\del P$.
Thus, $\nabla\eta(\{s\}\times\RR^n)\subset \RR\times (P\sm\del P)$. 
Note that $G(0)$ is the graph of $-\dot u_0$ over $P\sm\del P$. 
We now prove that $G(s)\subset G(0)$. The idea is that $s \to G(s)$ is a continuous set-valued map. 
If $G(s) $ is not contained in $G(0)$, it would sweep out a set of positive  Lebesgue measure in 
$\RR\times (P\sm\del P)\subset\RR^{n+1}$ as $s$ varies, contrary to
the assumption that $\eta$ is a weak solution. 
Since $\eta(s)$ is $C^1$ and strictly convex, its gradient map
$\nabla_x \eta (s):   \times \R^n \to P\backslash \partial P$ is a homeomorphism to its image, i.e.  has a $C^0$ single valued inverse,
and for each $x_0 \in P\sm\del P$,     $(\nabla_x\eta(s))^{-1}:P\sm\del P\ra\RR^n$  maps an open neighborhood 
of $\nabla_x\eta(s, x_0)\in P$ to an open neighborhood $U$ of $x_0$ in $\RR^n$.

 To clarify the picture, consider the diagram:
\begin{equation}\label{DIAGRAM} 
\begin{array}{ccccc}
\{s\} \times \R^n & \mapright{\nabla \eta}{75}
 & G(s) =   \{\dot{\eta}(s,x), \nabla_x \eta(s, x))\}  \subset \R  \times ( P\sm\del P)  
\\ & & \\
\mapdown{\pi}&  &  \mapdown{\pi}\\ & & \\
\R^n    & \mapleft{(\nabla_x \eta(s))^{-1}}{75} &  P\sm\del P
\end{array}
\end{equation}
Here, $\pi$ is the natural projection. Since $\nabla_x \eta(s, x): \R^n \to P\sm\del P$ is a
homeomorphism, also   $\pi: G(s) \to   P\sm\del P$ is a homeomorphism. Thus, $G(s)$ is a graph over 
$P\sm\del P$.

Now suppose that there exists  $z=(\dot\eta(s,x_0),\nabla_x\eta(s,x_0))\in G(s)\setminus G(0)$, i.e.
 $\dot\eta(s,x_0)\not=-\dot u_0\circ\nabla_x\eta(s,x_0)$. 
Then  $\pi^{-1}(U) \subset G(s)$ is a graph  passing through $z\not\in G(0)$. Since $G(s)$ is a continuous
set-valued mapping and $\pi: G(0) \to U$ is a different graph than $\pi: G(s) \to U$, the intermediate graphs
$\pi: G(\sigma) \to U$ for $\sigma \in [0, s]$ 
must fill out the region in between the graphs and create a set of positive Lebesgue measure. 
To be more precise, put  $S:=\sup\{\sigma\,:\, G(\sigma) \h{ contains } 
(-\dot u_0\circ\nabla_x\eta(s,x),\nabla_x\eta(s,x))\}\le s$.
Again by continuity, $\RR\times U\cap \Big(\cup_{\sigma\in[S,s]}G(\sigma) \Big)$
must contain a set of positive Lebesgue measure in $\RR^{n+1}$. This is impossible,
though, by Definition \ref{AlexandrovDef}. Thus, we have shown that
$G(s)\subset G(0)$ 

But since the projection of $G(s)$ onto the $\RR^n$ factor
equals $P\sm\del P$ for each $s$, and $G(0)$ is a graph over
$P\sm\del P$, the containment just proved implies
the equality $G(s)=G(0)$.
\end{proof}

Thus, by Lemma \ref{GMapLemma} and the differentiability assumption, for each $(s,x)\in[0,T]\times \RR^n$ there exists a unique
$y\in P\setminus\del P$  
such that
$$
\Big(\frac{\del\eta}{\del s}(s,x),\nabla_x\eta(s,x)\Big)=(-\dot u_0(y),y),
$$
or, in other words,
\begin{equation}
\label{DotEtaEq}
\frac{\del\eta}{\del s}(s,x)=
-\dot u_0\circ \nabla_x\eta(s,x),
\end{equation}
which concludes the proof of one direction of Theorem \ref{HJThm}.

For the converse, suppose that $\eta\in C^1([0,T]\times\RR^n)$ is
a solution of the Hamilton--Jacobi equation \eqref{HJEq}.
Then $\Im\nabla\eta\subset G(0)$, and since $G(0)$ has zero Lebesgue
measure in $\RR^{n+1}$,  $\eta$ is a weak
solution of the HRMA.
\end{proof}

\begin{remark}
{\rm
The proof can be generalized to handle admissible solutions
that are only partially $C^1$ regular in the sense of \cite[\S10]{RZ2}.
}
\end{remark}

\begin{proof}[Proof of Proposition \ref{ConeadmissibleIsOptimalProp}]

Let $\psi_L$ denote the leafwise subsolution of the HRMA (\ref{HRMARayEq}) given by
Proposition \ref{OptimalSubsolutionToricProp} and
let $\eta$ be a $C^1$ admissible solution of (\ref{HRMARayEq}) 
(see Definition \ref{AdmissibleDef}). Both $\psi$ and $\eta$ are convex
functions on $[0,T]\times\RR^n$. By Theorem \ref{HJThm} both $\psi$
and $\eta$ are solutions of the Hamilton--Jacobi equation \eqref{HJEq}.
The method of characteristics implies that $C^1$ solutions of \eqref{HJEq}
are unique as long as the characteristics of the equation do not intersect
each other. The equation for the projected characteristic curves ${\bf x}(s)$ is
(see, e.g., \cite[Chapter 3]{Evans})
$$
\dot {\bf x}(s)=\big(1,\nabla_\xi\dot u_0({\bf p}_\xi(s))\big), \qquad {\bf x}(0)=(0,x_0),
$$
while $z(s)$, the solution at ${\bf x}(s)$, satisfies
$$
\dot z(s)=\big(1,\nabla_\xi\dot u_0({\bf p}_\xi(s))\big)\cdot\big(p_\sigma(s),{\bf p}_{\xi}(s)\big), \qquad z(0)=\psi_0(x_0),
$$
and ${\bf p}(s)=(p_\sigma(s),{\bf p}_{\xi}(s))$, the gradient of the solution at ${\bf x}(s)$, satisfies
$$
\dot {\bf p}(s)=0,\qquad {\bf p}(0)=(\dot\psi_0(x_0),\nabla\psi_0(x_0)).
$$
Therefore, ${\bf x}(s)=\big(s,x_0+s\nabla \dot u_0(\nabla\psi_0(x_0))\big)$.
Thus, the projected characteristic do not intersect as long as the map
$(s,x)\mapsto (s,x+s\nabla \dot u_0(\nabla\psi_0(x)))$ is invertible,
or equivalently as long as 
$$
x\mapsto
\nabla u_0\circ\nabla\psi_0(x)+s\nabla\dot u_0\circ\nabla\psi_0(x)
$$
is invertible on $\RR^n$; this is precisely as long as $\nabla u_0+s\nabla\dot u_0$ is
invertible on $P\sm\del P$,
or as long as $u_0+s\dot u_0$ is strictly convex, i.e., precisely for $s<\Tspancvx$. Thus
$\eta=\psi_L$ for $s\le\Tspancvx$. In fact, the equation for ${\bf x}(s)$ shows
that the characteristics for the Hamilton--Jacobi equation precisely coincide with
the leaves of the HRMA foliation. Moreover, the equation for $z(s)$ shows that
$$
\begin{aligned}
z({\bf x}(s))
&=\psi_0(x_0)+s\dot\psi_0(x_0)+s\langle \nabla\dot u_0\circ\nabla\psi_0(x_0),
\nabla\psi_0(x_0)\rangle
\cr
&=-u_0\circ(\nabla\psi_0(x_0))
+\langle \nabla\dot u_0\circ\nabla\psi_0(x_0),
\nabla\psi_0(x_0)\rangle
\cr
&\quad +s\dot\psi_0(x_0)+s\langle \nabla\dot u_0\circ\nabla\psi_0(x_0),
\nabla\psi_0(x_0)\rangle
\cr
&=-u_0\circ(\nabla\psi_0(x_0))
+\langle \nabla u_0\circ\nabla\psi_0(x_0),
\nabla\psi_0(x_0)\rangle
\cr
&\quad -s\dot u_0(\nabla\psi_0(x_0))+s\langle \nabla\dot u_0\circ\nabla\psi_0(x_0),
\nabla\psi_0(x_0)\rangle
\cr
&=-(u_0+s\dot u_0)\circ(\nabla\psi_0(x_0))
+\langle \nabla(u_0+s\dot u_0)\circ\nabla\psi_0(x_0),
\nabla\psi_0(x_0)\rangle
,
\cr
\end{aligned}
$$
while from the equation for ${\bf x}(x)$ we have 
$$
{\bf x}(s)
=
(\nabla u_0+s\nabla \dot u_0)\circ\nabla\psi_0(x_0).
$$
Altogether, letting $u_s:=u_0+s\dot u_0$,  we have
$$
z(\nabla u_s(y))=-u_s(y)+\langle \nabla u_s(y),y\rangle,
$$
or in other words, $z(s,x)=u_s^\star(x)=\psi_L(s,x)$.
\end{proof}

Note that in the proof above we show in essence that
any $C^1$ solution of the HRMA is given by the Hopf--Lax
formula \cite{Hopf,Evans}.

Finally, we relate the orbits of the Moser map, the Hamiltonian
orbits, and the characteristics
in $\R^{n+1}$ of the HRMA. 
The following generalizes to weak $C^1$ solutions of the HRMA 
the well-known `conservation law' \eqref{ConservationLawHCMAEq} of smooth solutions of the HCMA. 

\begin{prop} \label{FTPHIDOT} Let $\eta$ be a $C^1$ weak solution of the HRMA \eqref{HRMARayEq},
and let $\vp=\eta-\psi_0$, considered as a function $M$.
Also, let $f_s$ be the Moser maps $f_s(z)=\exp-\i sX_{\dot\vp_0}^{\o_{\vp_0}}.z$ defined in \eqref{MAPS}
and  Proposition 
\ref{ToricLifespanProp}.
Then
$$
\dot{\vp}_s\circ f_s = \dot{\vp}_0.
$$
Further, the $f_s$-orbits $(s, f_s(x))$ are the leaves of the real Monge--Amp\`ere foliation,
namely the projected characteristics of the Hamilton--Jacobi equation \eqref{HJEq}.
\end{prop}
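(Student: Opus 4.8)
The plan is to establish the two assertions of Proposition \ref{FTPHIDOT} by leveraging the explicit formulas for the toric Moser maps together with Theorem \ref{HJThm} and its proof. First I would recall from Lemma \ref{MoserPathFormulaToricCor} that on the open orbit, in moment coordinates, $f_s = \nabla u_s \circ (\nabla u_0)^{-1}$ where $u_s = u_0 + s\dot u_0$, and that $f_s$ is invertible precisely for $s < \Tspancvx$ by Proposition \ref{ToricLifespanProp}(ii). Since $\eta$ is a $C^1$ admissible weak solution, Proposition \ref{ConeadmissibleIsOptimalProp} (whose proof precedes this one) forces $\eta = \psi_L$ and in particular $T < \Tspancvx$, so the $f_s$ are genuine diffeomorphisms on the relevant time interval and everything below is justified.

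For the conservation law $\dot\vp_s \circ f_s = \dot\vp_0$: using $\vp = \eta - \psi_0$ we have $\dot\vp_s = \dot\eta_s = \partial_s \eta(s,\cdot)$, and by \eqref{DotEtaEq} from the proof of Theorem \ref{HJThm}, $\dot\eta(s,x) = -\dot u_0 \circ \nabla_x \eta(s,x)$. Now by Lemma \ref{MoserPathFormulaToricCor}, $\nabla_x \eta(s,\cdot) \circ$ (the open-orbit coordinate) composed with $f_s$ recovers $\nabla\psi_0$: indeed $f_s^{-1} = (\nabla\psi_0)^{-1} \circ \nabla\psi_s$ where $\psi_s$ is the open-orbit potential of $\eta(s)$, i.e. $\nabla\psi_s \circ f_s = \nabla\psi_0$. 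Hence $\dot\eta(s, f_s(z)) = -\dot u_0 \circ \nabla_x\eta(s, f_s(z)) = -\dot u_0 \circ \nabla\psi_0(z) = \dot\psi_0(z) = \dot\vp_0(z)$, using the definition $\dot u_0 = -\dot\psi_0 \circ (\nabla\psi_0)^{-1}$ from the start of Section \ref{ConeLifespanSection} and $(S^1)^n$-invariance to identify $\dot\vp_0$ with $\dot\psi_0$. This is a short composition chase once the right identities are assembled; one should be careful to state it first on $M_\open$ and then extend to all of $M$ using the orbit-coordinate description (as in Lemma \ref{AnalyticContinuationHamOrbitsToricLemma}), since $f_s$ preserves each toric stratum.

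For the second assertion, that the $f_s$-orbits $s \mapsto (s, f_s(x))$ are the leaves of the real \MAno\ foliation and coincide with the projected characteristics of \eqref{HJEq}: the proof of Proposition \ref{ConeadmissibleIsOptimalProp} already computes the projected characteristic through $(0,x_0)$ to be ${\bf x}(s) = (s,\, x_0 + s\nabla\dot u_0(\nabla\psi_0(x_0)))$, which in moment coordinates is $(s,\, (\nabla u_0 + s\nabla\dot u_0)\circ\nabla\psi_0(x_0)) = (s,\, \nabla u_s(\nabla\psi_0(x_0)))$. On the other hand, by Lemma \ref{MoserPathFormulaToricCor}, $f_s(x_0) = \nabla u_s \circ (\nabla u_0)^{-1}(x_0) = \nabla u_s(\nabla\psi_0(x_0))$ after the coordinate identification $\nabla u_0 = (\nabla\psi_0)^{-1}$ from \eqref{GradientLegendreTransformEq}. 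Thus $(s,f_s(x_0))$ and ${\bf x}(s)$ literally agree; and the remark in the proof of Proposition \ref{ConeadmissibleIsOptimalProp} already notes that these characteristics coincide with the leaves of the HRMA foliation. So this reduces to citing and reassembling computations already in place.

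The main obstacle is not any deep estimate but rather the bookkeeping at the divisor at infinity: all the clean formulas ($f_s = \nabla u_s\circ(\nabla u_0)^{-1}$, equation \eqref{DotEtaEq}, the Legendre identities) are stated on $M_\open$, and one must invoke the slice–orbit coordinate machinery of Lemma \ref{AnalyticContinuationHamOrbitsToricLemma} to see that the identity $\dot\vp_s\circ f_s = \dot\vp_0$ and the identification of orbits with leaves persist on $M \setminus M_\open$, using that $f_s$ preserves the interior of each codimension-$k$ toric subvariety and restricts there to a map of the same form. Since $\dot\vp_s$, $f_s$, and the foliation leaves are all continuous on $M$ and $M_\open$ is dense, one could alternatively conclude by continuity once the open-orbit identity is established, which sidesteps most of the toric combinatorics.
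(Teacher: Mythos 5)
Your proposal is correct and takes essentially the same route as the paper: the paper's own (terse) proof of the conservation law likewise just combines \eqref{DotEtaEq}, the Moser-map formulas \eqref{SecondFsToricMomentEq}--\eqref{SecondAllTimeFsToricMomentEq}, and Propositions \ref{ToricLifespanProp} and \ref{ConeadmissibleIsOptimalProp}, and you merely write out the resulting composition chase (including the density/continuity remark for the divisor at infinity). For the second assertion the paper identifies the $f_s$-orbits with the real leaves by a short symplectic ``real slice of the complexified Hamiltonian orbit'' argument rather than your explicit comparison of $(s,f_s(x_0))$ with the projected characteristic ${\bf x}(s)$, but both versions rest on the same toric formulas and on the observation, already made in the proof of Proposition \ref{ConeadmissibleIsOptimalProp}, that the projected characteristics coincide with the leaves of the HRMA foliation.
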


\begin{proof}
By  combining \eqref{DotEtaEq}, \eqref{SecondAllTimeFsToricMomentEq}
and Propositions \ref{ToricLifespanProp} and \ref{ConeadmissibleIsOptimalProp}, one sees that
this  equation
is equivalent to the Hamilton--Jacobi equation in Theorem \ref{HJThm}.

To prove the last statement we note that the leaves of the  Monge--Amp\`ere foliation are orbits
of the complexified Hamiltonian action $\exp t X_{\dot{\vp}_0}^{\omega_{\vp_0}}$. The real orbits
lie on the orbits of the Hamiltonian $(S^1)^n$-action and the real slice of this torus orbit is a point. 
Hence the real slice is the imaginary time orbit, i.e., the orbit of $f_{s}$.
\end{proof}

\bigskip\bigskip
\noindent {\bf Acknowledgments.}
This material is based upon work supported in part by a NSF
Postdoctoral Research Fellowship and
grants DMS-0603850, 0904252.


\begin{thebibliography}{HHHH}


\bibitem{A} A.D. Alexandrov, Almost everywhere existence of the second differential
of a convex function and some properties of convex surfaces connected to it,
Leningrad State Univ. Ann., Math. Ser. 6 (1939), 3--35.

\bibitem{BB} E. Bedford, D. Burns, Holomorphic mapping of annuli in $\CC^n$
and the associated extremal function, Ann. Sc. Norm. Sup. Pisa Cl. Sci. (4) 6 (1979), 381--414.

\bibitem{BK} E. Bedford and M. Kalka, Foliations and complex Monge--Amp\`ere equations,
Comm. Pure Appl. Math. 30 (1977), 543--571.

\bibitem{D1} S.K. Donaldson, Symmetric spaces, K\"ahler geometry and Hamiltonian dynamics, Northern California Symplectic Geometry Seminar, Amer. Math. Soc. Transl. Ser. 2, 196, Amer. Math. Soc., Providence, RI, 1999, 13--33.

\bibitem{D2} 
\samename, Holomorphic discs and the complex Monge--Amp\`ere equation, 
J. Symp. Geom. 1 (2002), 171--196.

\bibitem{Evans} L.C. Evans, Partial differential equations, Second Ed., AMS, 2011.

\bibitem{Fo1} R.L. Foote, Differential geometry of 
real Monge--Amp\`ere foliations, Math. Z. 194
(1987), 331--350.

\bibitem{Fo2} \samename, 
A geometric solution to the Cauchy 
problem for the homogeneous Monge--Amp\`ere equation, 
Proc. Workshops in Pure Math. 11 (1991), 31--39, The 
Korean Academic Council.

\bibitem{GH} P. Griffiths and J. Harris, Principles of 
Algebraic Geometry, Wiley-Interscience, 1978.

\bibitem{H} J. Hadamard, Sur les probl\`emes aux deriv\'ees
partielles et leur signification physique, Princeton University
Bulletin 13 (1902), 49--52.

\bibitem{HN} P. Hartman, L. Nirenberg, On spherical image maps whose Jacobians do not change sign,
Amer. J. Math. 81 (1959), 901--920.

\bibitem{Hopf} E. Hopf, Generalized solutions of non-linear equations of first order,
J. Math. Mech. 14 (1965), 951--973.

\bibitem{Ho1} L.  H\"ormander,  
An introduction to complex analysis in several variables, Third Ed.,
North-Holland, 1990. 

\bibitem{Ho2} 
\samename, The analysis of linear partial differential operators I,
Second Ed., Springer, 1990.

\bibitem{L} M.M. Lavrentiev, Some improperly posed problems of mathematical physics, Springer,
1967.

\bibitem{M} T. Mabuchi, Some symplectic geometry on compact K\"ahler manifolds I,
Osaka J. Math. 24 (1987), 227--252.

\bibitem{P} L.E. Payne, Improperly posed problems in partial 
differential equations, Regional Conference
Series in Applied Mathematics, 
SIAM,
1975.

\bibitem{Po} A.V. Pogorelov, Extrinsic geometry of convex surfaces, American
Mathematical Society, 1973.

\bibitem{RT} J. Rauch, B.A. Taylor, The Dirichlet problem for the multidimensional Monge--Amp\`ere equation,
Rocky Mountain J. Math. 7 (1977), 345--364.

\bibitem{R} Y.A. Rubinstein, Geometric quantization and dynamical 
constructions on the space of K\"ahler metrics,
Ph.D. Thesis, Massachusetts Institute of Technology, 2008.

\bibitem{RZ1} Y.A. Rubinstein, S. Zelditch, The Cauchy problem
for the homogeneous Monge--Amp\`ere equation, I. Toeplitz quantization, 
J. Differential Geom. 90 (2012), 303--327.

\bibitem{RZ2} 
\samename,
The Cauchy problem for the homogeneous Monge--Amp\`ere equation, II. Legendre transform, 
Adv. Math. 228 (2011), 2989--3025.

\bibitem{RZ3} 
\samename, The Cauchy problem for the homogeneous Monge--Amp\`ere equation, IV,
preprint, in preparation.

\bibitem{Sa} I.Kh. Sabitov,
Isometric immersions and embeddings of locally Euclidean metrics,
Cambridge Scientific Publishers, 2008.

\bibitem{S} S. Semmes, Complex Monge--Amp\`ere and symplectic manifolds, 
Amer. J. Math. 114 (1992), 495--550.

\bibitem{SoZ1} J. Song, S.  Zelditch, Bergman metrics and geodesics 
in the space of K\"ahler metrics on toric varieties, 
Anal. PDE 3 (2010), 295--358.

\bibitem{St} E.M. Stein, R. Shakarchi, Complex analysis, Princeton
University Press, 2003.

\bibitem{U} V. Ushakov, The explicit general solution of the trivial Monge--Amp\`ere equation, Comm. Math. Helv. 75 (2000), 125--133.

\bibitem{W1} D.V. Widder, Functions harmonic in a strip, Proc. Amer. Math. Soc. 12 (1961), 
67--72.

\bibitem{W2} 
\samename,
Fourier cosine transforms whose real parts are non-negative in a
strip, Proc. Amer. Math. Soc. 16 (1965), 1246--1252.


\end{thebibliography}
\end{document}